\documentclass[11pt]{article}%
\usepackage{amssymb,amsmath,amsfonts,amsthm,array,bm,bbm,color}%
\usepackage{graphicx}
\usepackage{hyperref}
\hypersetup{colorlinks=true, linkcolor=red, anchorcolor=blue, citecolor=blue}
\setcounter{MaxMatrixCols}{30}%

\usepackage{graphicx}
\providecommand{\U}[1]{\protect\rule{.1in}{.1in}}

\setlength{\hoffset}{-0.4mm} \setlength{\voffset}{-0.4mm}
\setlength{\textwidth}{158mm} \setlength{\textheight}{235mm}
\setlength{\topmargin}{0mm} \setlength{\oddsidemargin}{0mm}
\setlength{\evensidemargin}{0mm} \setlength\arraycolsep{1pt}
\setlength{\headsep}{0mm} \setlength{\headheight}{0mm}

\numberwithin{equation}{section}

\newtheorem{theorem}{Theorem}[section]
\newtheorem{lemma}[theorem]{Lemma}

\newtheorem{proposition}[theorem]{Proposition}
\newtheorem{remark}[theorem]{Remark}
\newtheorem{example}[theorem]{Example}

\newtheorem{hypothesis}[theorem]{Hypothesis}

\def\<{\langle}
\def\>{\rangle}
\def\d{{\rm d}}
\def\L{\mathcal{L}}

\def\E{\mathbb{E}}
\def\N{\mathbb{N}}
\def\P{\mathbb{P}}
\def\R{\mathbb{R}}

\def\F{\mathcal{F}}
\def\X{X_{t}^{N,i}}
\def\L{L^{1}\cap L^{p}}
\def\eps{\epsilon}
\def\supp{\mathrm{supp}}

\begin{document}



\title{Scaling Limit of Moderately Interacting Particle Systems with Singular Interaction and Environmental Noise
}

\author{Shuchen Guo\footnote{Email: guoshuchen15@mails.ucas.ac.cn. School of Mathematical Sciences, University of the Chinese Academy of Sciences, Beijing, China and Academy of Mathematics and Systems Science, Chinese Academy of Sciences, Beijing, China.} \and Dejun Luo \footnote{Email: luodj@amss.ac.cn. Key Laboratory of RCSDS, Academy of Mathematics and Systems Science, Chinese Academy of Sciences, Beijing, China and School of Mathematical Sciences, University of the Chinese Academy of Sciences, Beijing, China.}}

\maketitle

\vspace{-20pt}

\begin{abstract}
	We consider  moderately interacting particle systems with singular interaction kernel and environmental noise. It is shown that the mollified empirical measures converge in strong norms to the unique (local) solutions of nonlinear Fokker-Planck equations. The approach works for the Biot-Savart and repulsive Poisson kernels.
\end{abstract}

\textbf{Keywords:} Interacting particle system, moderate interaction, environmental noise, entropy, semigroup method

\tableofcontents


\section{Introduction}

In this paper we are concerned with stochastic particle systems on $\R^d\, (d\geq 2)$ with moderate interaction (in the sense of \cite{Oel85, MRC87}) and the so-called \emph{environmental noise}: for any $N\in \N$,
\begin{equation}\label{IPS-0}
	\d X_t^{N,i} = \frac{1}{N}\sum_{j=1}^{N} \big(K\ast  V^N \big) \big( X_t^{N,i}- X_t^{N,j} \big)\, \d t + \d W\big(t, X_t^{N,i} \big), \quad 1\leq i\leq N,
\end{equation}
where $K$ is a (suitably smoothed) interaction kernel, $V^N$ is some convolution kernel to be specified later and $W(t,x)$ is a space-time noise, white in time and colored in space, modeling some random environment in which the particles are embedded. As a comparison, let us recall that the classical mean field stochastic particle system reads as
\begin{equation}\label{IPS-1}
	\d X_t^{N,i} = \frac{1}{N}\sum_{j=1, j\neq i}^{N} K\big( X_t^{N,i}- X_t^{N,j} \big)\,\d t + \d W^i_t,\quad 1\leq i\leq N,
\end{equation}
where $\{W^i_\cdot\}_{i\geq 1}$ is a family of mutually independent standard Brownian motions on $\R^d$. The motivation for considering particle systems with environmental noise comes from Lagrangian point vortex formulation of stochastic Euler equations, see \cite[p.1450]{FGP11} for a heuristic argument. It was stated in the fourth paragraph of \cite[p.1425]{FHM14} that noises on close point vortices should be quite correlated, a fact that might be modeled by environmental noises with suitable spatial covariance. We mention that there are also lots of studies on mean field games with common noise, i.e. there is a common (multiplicative) Brownian noise (independent of $\{W^i\}_{i\geq 1}$) in each equation of \eqref{IPS-1}, see \cite{CDL16, CD18} for systematic treatments. Particle systems with environmental noise (and mean field interaction) have been studied in some recent works under various conditions (see e.g. \cite{CF16, CM20, RM20}); in these papers, however, the noise part was not rescaled with respect to the number of particles and thus the limit equations of empirical measures are stochastic partial differential equations (SPDEs), unlike in the classical case \eqref{IPS-1} where the limit equations are deterministic. Recently, it was shown that some SPDEs with multiplicative noise of transport type, under a suitable scaling of the noise, converge to limit equations with an extra viscous part, see  \cite{FL20} for the vorticity form of stochastic 2D Euler equations and \cite{G20} for stochastic linear transport equations. Such ideas have been adapted in \cite{FL21} to the stochastic point vortex system with suitably rescaled environmental noise, showing that the weak limits of empirical measures satisfy the deterministic 2D Navier-Stokes equation; cf. \cite{FL19, FL21a} for scaling limits different from the mean field regime. Our purpose is to seek for a certain rescaling of the noise in \eqref{IPS-0} and prove, for some singular interaction kernels (including the Biot-Savart and repulsive Poisson kernels), strong convergence of mollified empirical measures to solutions of deterministic nonlinear Fokker-Planck equations.

Before stating our hypotheses and main results, we briefly recall some facts on interacting particle systems \eqref{IPS-1} which are widely used to model various phenomena in natural and social sciences. As the number of particles in a system is usually very large, one tries to derive macroscopic equations by studying the limit behavior of empirical measures, often described by some nonlinear PDEs. In the case of Lipschitz interaction kernel, the coupling method was popularized in \cite{Sznitman91} to show the mean field limit and propagation of chaos, namely, particles become independent of each other in the limit. A well known example of singular interaction kernel is the Biot-Savart kernel for point vortices, in which case the limit equation is the vorticity form of 2D Navier-Stokes equation, see e.g. \cite{Osada, Mel00, FHM14}. For particle systems related to other singular kernels, one can refer to \cite{Sznitman86, BT97} for Burgers equation, to \cite{CP16, FJ17, BJW19} for the Keller-Segel model on chemotaxis. In the case of singular interactions, it is often difficult to get quantitative convergence rates of empirical measures, but there are some remarkable progresses in recent years, see for instances \cite{Ser17, JW18, Ser20}. The large deviation principle was proved in \cite{LW20} for the empirical measures, and the long time behavior of the kinetic Fokker-Planck equation with mean field interaction was studied in \cite{GLWZ21}. We refer to the survey \cite{JW17} for more literatures on mean field limits.

Since its introduction by Oelschl\"ager \cite{Oel85}, particle systems with moderate interaction
$$\d X_t^{N,i} = \frac{1}{N}\sum_{j=1}^{N} \big(K\ast  V^N \big)\big( X_t^{N,i}- X_t^{N,j} \big)\,\d t + \d W^i_t,\quad 1\leq i\leq N$$
have also attracted lots of attention, see e.g. \cite{MRC87, Oel89, Jourdain98, JM98}. In the recent paper \cite{FLO19}, Flandoli et al. developed a semigroup approach which enables them to show uniform convergence of mollified empirical measures; see \cite{FlLeo19, FOS20, ORT20a, ORT20b} for further applications of this method. In particular, Flandoli et al. \cite{FOS20} considered the moderately interacting point vortex system with a cut-off:
\begin{equation}\label{IPS-2}
	\d X_t^{N,i} = F_A\bigg(\frac{1}{N}\sum_{j=1}^{N} \big(K\ast  V^N \big)\big( X_t^{N,i}- X_t^{N,j} \big) \bigg)\,\d t + \d W^i_t,\quad 1\leq i\leq N,
\end{equation}
where $K$ is now the Biot-Savart kernel and $F_A:\R^2\to \R^2,\, x\mapsto (f_A(x_1), f_A(x_2))$ is a smooth cut-off with threshold $A>0$, and they proved that the smoothed empirical measures converge uniformly to the unique solution of 2D Navier-Stokes equation in vorticity form. Thanks to explicit estimate on solutions to the limit equation, the cut-off does not appear in the limit by choosing a sufficiently large $A>0$. This result was greatly improved in \cite{ORT20b} for singularly interacting particle systems with cut-off, providing quantitative convergence rates.

In the present work, we consider moderately interacting particle systems with singular kernel $K$ and environmental noise $W_N$:
\begin{equation}\label{IPS-3}
	\d X_t^{N,i} = \frac{1}{N}\sum_{j=1}^{N} \big(K_\epsilon \ast  V^N \big) \big( X_t^{N,i}- X_t^{N,j} \big)\, \d t + \d W_N\big(t, X_t^{N,i} \big), \quad 1\leq i\leq N,
\end{equation}
where $K_\epsilon= K\ast  \rho_\epsilon$ for some smoothing kernel $\rho_\epsilon$, and we have written $W_N$ to indicate that the noise will be rescaled along with the number of particles. There are two main differences from \eqref{IPS-2}: the noise in the above system is of environmental type, and the cut-off $F_A$ in the interaction part is replaced by convolution with $\rho_\epsilon$ (see Remark \ref{rem-conv-kernels} below for some comments on the latter). As in \cite{FL21}, for particle systems \eqref{IPS-3} with environmental noise, one difficulty is to show that the noise parts in the equations of empirical measures vanish as $N\to \infty$, a fact that follows easily in the case of independent Brownian noises as in \eqref{IPS-2}. To overcome this difficulty (see Lemma \ref{convratelemma}), we shall make nontrivial use of estimates on Boltzmann entropy obtained in Lemma \ref{upperbound}, where in the proof we need to compute the divergence of the drift vector field corresponding to the interaction part, and it seems difficult to get useful estimates for the cut-off in \eqref{IPS-2}.

\subsection{The precise model}

We first give a more precise form of the noise in the particle system \eqref{IPS-3}. Throughout the paper we assume that the noise $W_N(t,x)$ is white in time, spatially homogeneous and divergence free; define its covariance matrix as
$$Q_N(x-y) = \E[W_N(1,x)\otimes W_N(1,y)],\quad x,y\in \R^d. $$
By general arguments (cf. \cite[Section 1]{LJR02}), there exist (possibly infinitely many) divergence free vector fields $\{\sigma^N_k \}_{k\geq 1}$ on $\R^d$, such that
$$Q_N(x-y)= \sum_k \sigma^N_k(x) \otimes \sigma^N_k(y). $$
We shall assume that $Q_N\in C_b^{2+\delta}(\R^d, \R^{d\times d})$ for some $\delta>0$, then it is not difficult to see that $\sum_k \|\sigma^N_k \|_{C_b^{1+\delta/2}}^2<+\infty$; cf. \cite[Section 2.1]{CF16} for some related discussions. Taking a sequence of independent standard Brownian motions $\{W^k \}_{k\geq 1}$ on some filtered probability space $(\Omega, \F, (\F_t), \P)$, we can represent $W_N(t,x)$ as
$$W_N(t,x) = \sum_k \sigma^N_k(x) W^k_t. $$
The above series converge in the mean square sense, locally uniformly in $(t,x)$. Therefore, the stochastic particle system we consider has the precise form: for $1\leq i\leq N$,
\begin{equation}\label{IPS}
	\d X_t^{N,i} = \frac{1}{N}\sum_{j=1}^{N} \big(K_\epsilon \ast  V^N \big) \big( X_t^{N,i}- X_t^{N,j} \big)\, \d t + \sum_k \sigma^N_k \big( X_t^{N,i} \big) \,\d W^k_t, \quad X_0^{N,i} = X^i_0,
\end{equation}
where $\{X^i_0\}_{i\geq 1}$ is a family of $\F_0$-measurable i.i.d. random variables on $\R^d$. The equation \eqref{IPS} can also be written in the Stratonovich form which coincides with the It\^o form. Thanks to the convolution, the drift coefficient in \eqref{IPS} is smooth with bounded derivatives; this plus the regularity assumptions on the diffusion coefficients implies that the above system generates a stochastic flow of diffeomorphisms on $\R^d$, see \cite[Section 4.6]{Kunita90} and Lemma \ref{density} below for more information on the flow.

As usual, we denote the empirical measure by
\begin{equation*}
	S_{t}^{N}=\frac{1}{N} \sum_{i=1}^{N} \delta_{X_{t}^{N, i}},
\end{equation*}
and rewrite \eqref{IPS} in the following more compact form: for $1\leq i\leq N$,
\begin{equation}\label{compactform}
	\d X_{t}^{N,i}= \big(K_{\epsilon}\ast  V^N\ast  S_t^N\big)\big(X_t^{N,i} \big)\, \d t +\sum_k \sigma^N_k \big(X_{t}^{N,i} \big)\, \d W_t^k, \quad X_0^{N,i} = X^i_0.
\end{equation}
We will study the mollified empirical measure $\omega_{t}^{N}$ defined as
\begin{equation}\label{mollifiedmeasure}
	\omega_{t}^{N}(x):=\left(V^{N}\ast  S_{t}^{N}\right)(x)=\int_{\R^d} V^N(x-y)\, \d S_{t}^{N}(y).
\end{equation}
Under suitable hypotheses on the interaction kernel $K$, on the convolution kernels $\rho_\eps, V^N$ and on the covariance function $Q_N$, we will prove that $\omega^N_{\cdot}$ converges, as $N\rightarrow\infty$, in some strong norm to the unique (local) solution of the deterministic nonlinear Fokker-Planck equation:
\begin{equation}\label{PDE}
	\left\{\begin{array}{l}
		\partial_{t} \omega+ \nabla\cdot ((K \ast  \omega) \omega)=\nu \Delta \omega,\\
		\omega(0, \cdot)=\omega_{0}(\cdot),
	\end{array}\right.
\end{equation}
where $\nu>0$ is a constant and $\omega_0:\R^d\to \R$ is some function. The solution of this equation is understood as follows: given $\omega_0\in L^1\cap L^p(\R^d)$, a function $\omega: [0,T]\times \R^d\to \R$ is called a mild solution of \eqref{PDE} if $\omega \in C\big([0,T], L^1\cap L^p(\R^d)\big)$ and the following identity holds:
\begin{equation}\label{omegat}
	\omega_t = e^{\nu t\Delta}\omega_0 - \int_0^t \nabla \cdot e^{\nu (t-s)\Delta} ((K \ast  \omega_s) \omega_s)\,\d s, \quad t\in [0,T].
\end{equation}
Here $\{e^{\nu t\Delta} \}_{t\geq 0}$ is the heat semigroup generated by $\nu \Delta$, see Section \ref{notationsandlemma} below for more details.

\subsection{Hypotheses and main result}

We first state our hypotheses on the interaction kernel $K$. For any $R>0$, denote by $B(R)$ the closed ball in $\R^d$ centered at the origin with radius $R$ and $B(R)^c$ its complement.

\begin{hypothesis}\label{hypothesis0}
	We assume that the kernel $K:\R^d \to \R^d$ is smooth away from the origin, with the following properties:
	\begin{itemize}
		\item[1.] there exists $p>2$ such that $K\in L^{p^{\prime}}(B(1))$, where $p^{\prime}$ is the conjugate number of $p$;
		\item[2.] $K\in L^{\infty}(B(1)^{c})$;
		\item[3.] The divergence of $K$ is nonnegative in the sense of distribution, namely $\nabla\cdot K \geq 0$.
	\end{itemize}
\end{hypothesis}

\begin{example}\label{exa-intera-kernels}
	Here are some examples of kernels satisfying the above conditions.
	\begin{itemize}
		\item The Biot-Savart kernel $K(x)= \frac1{2\pi} \frac{x^\perp}{|x|^2}$ where $x^\perp =(-x_2, x_1)$ for any $x\in \R^2\setminus \{0\}$.
		\item The repulsive Poisson kernel $K(x)= C_d \frac{x}{|x|^d},\, x\in \R^d\setminus \{0\}$, where $C_d>0$. Note that the first condition holds with $p>d$ and $\nabla\cdot K(x) = C_d^{\prime}\delta_0$, where $C_d^{\prime}>0$ and $\delta_0$ is the Dirac delta mass at the origin.
		\item More generally, let $K(x) = \nabla V_s(x)$, where
		$$V_s(x) = \begin{cases}
			-|x|^{-s}, & s\in (0,d), \\
			\log |x|, & s=0,
		\end{cases} \quad x\in \R^d\setminus \{0\} $$
		is the Riesz potential. In order that the conditions are fulfilled, we need  $s\in [0,d-2]$ and $p>s+2$. When $s=d-2$, it reduces to the repulsive Poisson kernel; when $0<s<d-2$, $\nabla\cdot K(x)=C_{s,d}\frac{1}{|x|^{s+2}}$ for some constant $C_{s,d}>0$.
	\end{itemize}
\end{example}

Next we give the conditions on the convolution kernels and the covariance functions $Q_N$. We write $I_d$ for the $d$-dimensional identity matrix. Throughout the paper, we fix a parameter $m>2$.

\begin{hypothesis}\label{hypothesis}
	\begin{itemize}
		\item[1.] We define the regularized kernel $K_{\epsilon}:=K\ast  \rho_{\epsilon}$ with mollifier  $\rho_{\epsilon}(x)= \epsilon^{-d}\rho(\epsilon^{-1} x)$, where $\epsilon>0$ and $\rho\in C_c^{\infty}(\R^d,\R_{+})$ is a probability density.
		\item[2.] The interaction potential $V^N(x)=N^{d\beta}V(N^{\beta}x)$, where $V\in C_c^{\infty}(\R^d,\R_{+})$ is a probability density and $\beta$ is taken as \begin{equation}\label{hypobetarange}
			0<\beta\leq \frac{1}{4m(d+2)}.
		\end{equation}
		\item[3.] The covariance functions $Q_N\in C_b^{2+\delta}(\R^d, \R^{d\times d})$ satisfy that $Q_N(0) = 2\nu I_d$ for some $\nu>0$, and $Q_N$ corresponds to divergence free vector fields; moreover, for any $r\geq 2$, there exists $C_r>0$ such that $\|Q_N \|^r_{L^r} \leq C_r e^{-2N}$ for any $N\geq 1$.
	\end{itemize}
\end{hypothesis}

\begin{remark}\label{rem-conv-kernels}
	Here are some comments on the above hypotheses.
	\begin{itemize}
		\item We need to regularize the singular interaction kernel $K$ by convolution since, in the proof of the main result via the semigroup method, quantities like $\|K_\epsilon \|_\infty$ and $\|\nabla K_\epsilon \|_\infty$ will appear, see e.g. \eqref{kg} and \eqref{kk}. That is also the reason why a cut-off as in \eqref{IPS-2} has been introduced in \cite{FOS20, ORT20b}.
		\item The presence of potential $V^N$ is the main feature of moderate interaction; actually, the range of $\beta$ can be slightly larger than \eqref{hypobetarange}; see  Remark \ref{betarange} for more details.
		\item The last condition on the covariance matrix function may look strange at first sight; it will be used in the proof of Lemma \ref{convratelemma} where we need rapid decrease of $\|Q_N \|_{L^r}$ to cancel certain quantities. Heuristically, we require that the random external force $W_N(t,x)$ has constant trace and exponentially vanishing covariance. We shall give in Section \ref{sectionnoise} an example of such covariance functions in terms of the Kraichnan noise.
	\end{itemize}
\end{remark}

Finally we present the assumptions concerning the initial particles $X^{i}_0,\, i\geq 1$. Recall the definition of mollified empirical measure in \eqref{mollifiedmeasure}.

\begin{hypothesis}\label{hypothesis-initial}
	\begin{itemize}
		\item[1.] Let $\{X^i_0 \}_{i\geq 1}$ be a family of i.i.d. random variables on $\R^d$ with finite moment of order $m(d+1)$ and common density function $\omega_0 \in L^1\cap L^p(\R^d,\R_+)$, where $p>2$ is the same as in Hypothese \ref{hypothesis0}.
		\item[2.] Assume $\omega^N_0$ converges to $\omega_0$ with rate $\zeta_N$, i.e.
		$$\zeta_N:=\big\|\omega^N_0 - \omega_0 \big\|_{L^m(\Omega, L^1\cap L^p)} \rightarrow 0  \quad\text{as}\quad N \rightarrow\infty. $$
	\end{itemize}
\end{hypothesis}

\begin{remark}\label{rem-initial}
	\begin{itemize}
		\item The first condition in Hypotheses \ref{hypothesis-initial} implies, together with the range of $\beta$ in Hypotheses \ref{hypothesis}, that $\{\omega^N_0 \}_{N\geq 1}$ is uniformly bounded in $L^m(\Omega, \L)$. Indeed, by Sobolev embedding  $H^{d(\frac{1}{2}-\frac{1}{p}),2}(\R^d)\subset L^p(\R^d)$, it suffices to show that $\{\omega^N_0 \}_{N\geq 1}$ is uniformly bounded in $L^m\big(\Omega,  H^{d(\frac{1}{2}-\frac{1}{p}),2}\big)$; a proof of the latter can be found in \cite[Lemma 6]{FlLeo19} in the special case $\omega_{0}\in C^{\infty}_c(\R^d)$.  We shall give in Appendix \ref{boundomega0} a sketched proof for $\omega_0\in L^1\cap L^p$ and $m\leq p =3$.
		\item If $\omega_0 \in C_c^1(\R^d)$, then one can prove that $\zeta_N\leq CN^{-\lambda}$ for some $C>0$ and $\lambda>0$. We show such an estimate in the special case $m=p=4$, see Appendix \ref{boundomega0omega}.
	\end{itemize}
\end{remark}

Before stating our main result, we remark that, for general interaction kernel as in Hypotheses \ref{hypothesis0}, the limit equation \eqref{PDE} may not have a global solution; however, thanks to the first two conditions in Hypotheses \ref{hypothesis0}, local existence is guaranteed by \cite[Proposition 1.2]{ORT20b}. We shall denote by $T_{max}$ the maximal time of existence of mild solution to \eqref{PDE}.

\begin{theorem}\label{thm-main}
	Assume Hypotheses \ref{hypothesis0}, \ref{hypothesis} and \ref{hypothesis-initial}, and take
	$$\epsilon = \epsilon(N) \sim \big[(\log N)\wedge (-\log \zeta_N)\big]^{-\frac{p^{\prime}}{2d}}.$$
	Then for any $T\in (0,T_{max})$, we have
	$$\lim_{N\to \infty} \bigg\| \sup_{t\in [0,T]} \big\| \omega^N_t - \omega_t \big\|_{L^1\cap L^p} \bigg\|_{L^m(\Omega)} =0. $$
\end{theorem}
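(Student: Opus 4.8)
The plan is to compare the mollified empirical measure $\omega^N_t$ with the mild solution $\omega_t$ of \eqref{PDE} by deriving an (approximate) mild equation for $\omega^N_t$ and then running a Gr\"onwall-type argument in the space $L^m(\Omega, C([0,T], L^1\cap L^p))$. First I would apply It\^o's formula to $\varphi(X^{N,i}_t)$ for test functions $\varphi$, or directly use the known It\^o formula for $\omega^N_t = V^N\ast S^N_t$, to obtain
\begin{equation*}
	\d \omega^N_t = \nu\Delta \omega^N_t\,\d t - \nabla\cdot\big[(K_\epsilon\ast V^N\ast S^N_t)\,\omega^N_t\big]\,\d t + \mathcal{E}^N_t\,\d t + \d M^N_t,
\end{equation*}
where $M^N_t$ is a martingale term coming from the environmental noise $\sum_k\sigma^N_k\,\d W^k$, and $\mathcal{E}^N_t$ collects commutator-type errors: the difference between $\nu\Delta$ (from $Q_N(0)=2\nu I_d$) and the genuine second-order operator $\tfrac12\sum_k(\sigma^N_k\cdot\nabla)^2$ acting on $\omega^N_t$, plus the error from mollifying the drift (replacing $V^N\ast(\text{drift})$ by $(\text{drift})\ast V^N$ and $K$ by $K_\epsilon$). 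Writing this in mild form against the heat semigroup $e^{\nu t\Delta}$ gives
\begin{equation*}
	\omega^N_t = e^{\nu t\Delta}\omega^N_0 - \int_0^t \nabla\cdot e^{\nu(t-s)\Delta}\big[(K_\epsilon\ast\omega^N_s)\,\omega^N_s\big]\,\d s + \int_0^t e^{\nu(t-s)\Delta}\mathcal{E}^N_s\,\d s + \int_0^t e^{\nu(t-s)\Delta}\,\d M^N_s,
\end{equation*}
after further replacing $V^N\ast S^N_s$ by $\omega^N_s$ in the drift (another error, controlled since $V^N$ is an approximate identity at scale $N^{-\beta}$).

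**Next**, subtracting \eqref{omegat} and setting $u^N_t := \omega^N_t - \omega_t$, I would estimate $\|u^N_t\|_{L^1\cap L^p}$ using the smoothing properties of the heat semigroup recorded in Section \ref{notationsandlemma}: the key bound $\|\nabla e^{\nu t\Delta} f\|_{L^q} \lesssim t^{-1/2}\|f\|_{L^q}$ together with the paraproduct-type splitting
\begin{equation*}
	(K_\epsilon\ast\omega^N_s)\omega^N_s - (K\ast\omega_s)\omega_s = (K_\epsilon\ast u^N_s)\omega^N_s + (K\ast\omega_s)u^N_s + \big((K_\epsilon - K)\ast\omega_s\big)\omega^N_s .
\end{equation*}
Using $\|K_\epsilon\ast u^N_s\|_{L^\infty}\lesssim \|K_\epsilon\|_{L^{p'}(B(1))}\|u^N_s\|_{L^p} + \|K\|_{L^\infty(B(1)^c)}\|u^N_s\|_{L^1}$ (Young + Hypothesis \ref{hypothesis0}) and the local bound on $\omega_s$, one gets, after taking $\sup_{t\le \tau}$ and $L^m(\Omega)$ norms and using a singular-but-integrable Gr\"onwall lemma (the $(t-s)^{-1/2}$ kernel),
\begin{equation*}
	\Big\|\sup_{t\le\tau}\|u^N_t\|_{L^1\cap L^p}\Big\|_{L^m(\Omega)} \lesssim \zeta_N + \big\|\text{noise term}\big\|_{L^m} + \big\|\text{error term }\mathcal{E}^N\big\|_{L^m} + C_T\int_0^\tau (\tau-s)^{-1/2}\Big\|\sup_{t\le s}\|u^N_t\|_{L^1\cap L^p}\Big\|_{L^m}\,\d s,
\end{equation*}
valid on $[0,T]$ with $T<T_{max}$; the generalized Gr\"onwall inequality then closes the estimate provided the first three terms tend to $0$. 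The $(K_\epsilon-K)\ast\omega_s$ term is $O(\|K_\epsilon-K\|_{L^{p'}(B(1))})\to 0$ by the choice of $\epsilon(N)$.

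**The main obstacle**, and where I expect to spend the most effort, is showing that the stochastic convolution $\int_0^t e^{\nu(t-s)\Delta}\,\d M^N_s$ (equivalently, the noise part of the empirical measure equation) vanishes in $L^m(\Omega, C([0,T], L^1\cap L^p))$ as $N\to\infty$. This is exactly the difficulty flagged in the introduction (Lemma \ref{convratelemma}): unlike independent Brownian noises, the environmental noise does not average out by independence across particles. I would control it by a Burkholder--Davis--Gundy / Da Prato--Kwapie\'n--Zabczyk-type estimate for the stochastic convolution in $L^p$, reducing matters to bounding the quadratic variation, whose kernel involves $Q_N$ evaluated at particle differences $X^{N,i}_s - X^{N,j}_s$; the near-diagonal contribution is handled using the Boltzmann-entropy bound of Lemma \ref{upperbound} (which gives control on the density of particle pairs and hence prevents concentration), while the off-diagonal part is killed by the assumption $\|Q_N\|_{L^r}^r\le C_r e^{-2N}$ from Hypothesis \ref{hypothesis}, yielding a bound of order $e^{-cN}$ times polynomial-in-$N$ entropy factors. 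Secondary technical points are: verifying the error term $\mathcal{E}^N$ (the gap $\nu\Delta - \tfrac12\sum_k(\sigma^N_k\cdot\nabla)^2$ applied to $\omega^N$, which requires an $H^{1+\delta/2}$-type bound on $\omega^N$ uniform in $N$ via entropy/regularity estimates) genuinely goes to zero; handling the moderate-interaction error $\|V^N\ast\omega_s - \omega_s\|\lesssim N^{-\beta}$; and checking the $\epsilon(N)\to 0$ rate is slow enough that $\|K_\epsilon\|_\infty,\|\nabla K_\epsilon\|_\infty$ (which grow polynomially in $\epsilon^{-1}$, hence poly-logarithmically in $N$) do not destroy the other estimates — this is the reason for the specific scaling $\epsilon\sim[(\log N)\wedge(-\log\zeta_N)]^{-p'/(2d)}$.
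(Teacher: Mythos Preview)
Your overall architecture---derive a mild equation for $\omega^N_t$, subtract the mild solution $\omega_t$, use heat-semigroup smoothing plus a singular Gr\"onwall, and show the stochastic convolution vanishes via BDG, the entropy bound of Lemma~\ref{upperbound}, and the decay $\|Q_N\|_{L^r}^r\lesssim e^{-2N}$---is exactly the paper's strategy (Section~\ref{sec-proof-main-thm} and Theorem~\ref{stochasticconvolution}). Two points deserve correction or comment.

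\textbf{There is no commutator error of the kind you propose.} When you apply It\^o's formula to $V^N(x-X^{N,i}_t)$, the second-order correction is $\tfrac12\operatorname{Tr}\!\big[Q_N(0)\,\nabla^2_y V^N(x-y)\big]\big|_{y=X^{N,i}_t}=\nu\Delta_x V^N(x-X^{N,i}_t)$, because $Q_N(0)=2\nu I_d$ exactly. Summing over $i$ gives $\nu\Delta\omega^N_t$ on the nose; there is no residual operator $\tfrac12\sum_k(\sigma^N_k\cdot\nabla)^2-\nu\Delta$ acting on $\omega^N_t$. (You may be thinking of the Stratonovich--It\^o correction for a transport SPDE in the $x$-variable, but here the noise acts on the particle positions, not on the field $\omega^N$.) Consequently the mild equation for $\omega^N_t$ has only three pieces---initial datum, nonlinear drift, stochastic convolution---and you will not need any $H^{1+\delta/2}$ bound on $\omega^N$.

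\textbf{Direct comparison versus an intermediate PDE.} The paper does not compare $\omega^N$ with $\omega$ directly; it inserts the solution $\omega^\epsilon$ of \eqref{PDE-eps} and bounds $\|\omega^N-\omega^\epsilon\|$ (Proposition~\ref{main1}) and $\|\omega^\epsilon-\omega\|$ (Proposition~\ref{firstpart}) separately. This buys two things. First, in the Gr\"onwall step the nonlinear increment becomes $(K_\epsilon\ast\omega^\epsilon_s)\omega^\epsilon_s-(K_\epsilon\ast\omega^N_s)\omega^N_s$, and the resulting coefficient in front of $\|\omega^\epsilon_s-\omega^N_s\|_{L^1\cap L^p}$ is the \emph{deterministic} quantity $C\|K_\epsilon\|_\infty$; your splitting produces $(K_\epsilon\ast u^N_s)\,\omega^N_s$, a product of two random factors, which does not close in $L^m(\Omega)$ without a further H\"older step that would force you to $L^{2m}$. (A cure, if you want to stay direct, is to rearrange the paraproduct so that the outer factor is always deterministic: write $(K_\epsilon\ast\omega^N_s)u^N_s+(K_\epsilon\ast u^N_s)\omega_s+((K_\epsilon-K)\ast\omega_s)\omega_s$ and use $\|K_\epsilon\ast\omega^N_s\|_\infty\le\|K_\epsilon\|_\infty$ from $\|\omega^N_s\|_{L^1}=1$.) Second, the intermediate route isolates the $(K_\epsilon-K)$ error as a purely deterministic statement about \eqref{PDE-eps} converging to \eqref{PDE}, which is cleaner than carrying it through the stochastic Gr\"onwall. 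Either way the Gr\"onwall constant scales like $\|K_\epsilon\|_\infty\sim\epsilon^{-d/p'}$, so the Mittag--Leffler factor is $\exp(C\epsilon^{-2d/p'})$, and this---not any polynomial loss---dictates the logarithmic choice of $\epsilon(N)$.
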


Unfortunately, we are unable to get a polynomial convergence rate as in \cite[Theorem 1.3]{ORT20b}, even if we assume such a convergence rate for the initial data as in the second item of Remark \ref{rem-initial}. The reason is that we shall actually prove the result in two steps: firstly, prove that the mollified empirical measures $\omega^N$, for $\epsilon= \epsilon(N)$ chosen above, are close to the solution of the nonlinear Fokker-Planck equation with smooth kernel:
\begin{equation}\label{PDE-eps}
	\left\{\begin{array}{l}
		\partial_t \omega^\epsilon+ \nabla\cdot ((K_\epsilon \ast  \omega^\epsilon) \omega^\epsilon)=\nu \Delta \omega^\epsilon,\\
		\omega^\epsilon(0, \cdot)=\omega_0(\cdot);
	\end{array}\right.
\end{equation}
secondly, prove that the solution $\omega^\epsilon$ tends to that of \eqref{PDE} as $\epsilon \to 0$. The convergence rate of the second step is very slow due to the choice of $\epsilon(N)$.

We close this introduction with the structure of the paper. In Section \ref{sectionpreparations} we make some necessary preparations for the proof of Theorem \ref{thm-main}; in particular, the estimate on Boltzmann entropy in Section \ref{sectionentropy} will play an important role in the sequel. Section \ref{sectioncovariance} contains a key estimate on the covariance of random forces on two different particles. Then, we will prove Theorem \ref{thm-main} in Section \ref{sec-proof-main-thm} by following some ideas of the proof of \cite[Theorem 1.3]{ORT20b}. Finally, we provide in the appendices the estimates on stochastic convolutions which are more complex than those in \cite[Section A.2]{ORT20b}, and some estimates on $\omega^N_0$.

\section{Preparations}\label{sectionpreparations}

We introduce some notations and a generalized Gr\"{o}nwall lemma in Section \ref{notationsandlemma}, then we give in Section \ref{sectionnoise} an example of covariance functions of the noises in \eqref{IPS}; Section \ref{sectionkernel} contains some properties of the regularized interaction kernel; finally, the Boltzmann entropy is introduced in Section \ref{sectionentropy}, together with some useful bounds.

\subsection{Functional setting and a Gr\"{o}nwall lemma}\label{notationsandlemma}

We define the operator:
$$A: \mathcal{D}(A) \subset L^{p}(\R^d) \rightarrow L^{p}(\R^d)$$
as $Af:=\nu \Delta f$, which is the infinitesimal generator of the heat semigroup $\{e^{tA}\}_{t\geq 0}$: for any $f\in L^{p}(\R^d)$,
\begin{equation}\label{semigroup}
	\begin{aligned}
		\left(e^{t A} f\right)(x)=&\,\int_{\R^d}g_t(x-y)f(y)\d y,
	\end{aligned}
\end{equation}
where
\begin{equation*}
	\begin{aligned}
		g_t(x-y):=\frac{1}{(4 \nu \pi t)^{\frac{d}{2}}} e^{-\frac{|x-y|^{2} }{4 \nu t}}.
	\end{aligned}
\end{equation*}
For any $r\in\R$, the Bessel potential operator is given by $(\mathrm{I}-A)^{r/2}$ where $\mathrm{I}$ is the identity operator. The domain of this operator $\mathcal{D}\left((\mathrm{I}-A)^{r / 2}\right)$ is called the Bessel potential space
$$
H^{r, p}\big(\R^{d}\big):=\left\{f \in \mathcal{S}^{\prime}\big(\R^{d}\big): \mathcal{F}^{-1}\left[\left(1+|\cdot|^{2}\right)^{\frac{r}{2}} \mathcal{F} f\right] \in L^{p}\big(\R^{d}\big)\right\},
$$
where $\mathcal{S}^{\prime} \big(\R^{d}\big)$ denotes the space of tempered distributions and $\F f$ is the Fourier transform of $f$. The norm in this space is
$$
\|f \|_{r, p}=\left\|\mathcal{F}^{-1}\left[\left(1+|\cdot|^{2}\right)^{\frac{r }{2}} \mathcal{F} f (\cdot)\right]\right\|_{L^{p}\left(\R^{d}\right)}.
$$
For any $t\in[0,T]$, $\varepsilon>0$ and $q\geq1$, we have the following estimate:
\begin{equation}\label{lpestimate}
	\left\|(\mathrm{I}-A)^{\varepsilon} e^{t A}\right\|_{L^{q} \rightarrow L^{q}} \leq \frac{C_{\varepsilon,  \nu,T, q}}{t^{\varepsilon}},
\end{equation}
where the details can be found in \cite{PA12}.

Here we quote a generalized Gr\"{o}nwall lemma (see similar statements in  \cite[Lemma 7.1.1]{DH06} and \cite[Corollary 2]{HY07}):

\begin{lemma}\label{Gronwall}
	Assume $b\geq0$, $\alpha>0$ and $a(t)$ is a nondecreasing locally integrable function on $\left[0,T\right)$, and assume $u(t)$ is nonnegative and  locally integrable on $\left[0,T\right)$ with
	$$
	u(t)\leq a(t)+b\int_{0}^{t}(t-s)^{\alpha-1}u(s)\d s
	$$
	on this interval. Then
	\begin{equation}\label{mittag}
		\begin{aligned}
			u(t)\leq a(t)\, E_{\alpha} (b\Gamma(\alpha)t^{\alpha} ),\\
		\end{aligned}
	\end{equation}
	where $E_{\alpha}$ is the Mittag-Leffler function defined by $E_{\alpha}(z)=\sum_{k=0}^{\infty}\frac{z^k}{\Gamma(k\alpha+1)}$.
\end{lemma}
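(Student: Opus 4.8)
The plan is to iterate the integral inequality and recognize the Mittag-Leffler series as the sum of the resulting terms. Introduce the linear operator
$$(Bv)(t) := b\int_0^t (t-s)^{\alpha-1} v(s)\, \d s,$$
which is well defined on functions locally integrable on $[0,T)$ since the kernel $s^{\alpha-1}$ is locally integrable for $\alpha>0$, and which is \emph{monotone}: if $v_1\leq v_2$ a.e. then $Bv_1\leq Bv_2$ a.e., because the kernel $b(t-s)^{\alpha-1}$ is nonnegative. The hypothesis reads $u\leq a+Bu$ a.e. on $[0,T)$. Applying $B$ repeatedly and using monotonicity, I obtain for every $n\geq 1$
$$u(t)\leq \sum_{k=0}^{n-1} (B^k a)(t) + (B^n u)(t), \qquad \text{a.e. } t\in[0,T),$$
with $B^0 a = a$. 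The conclusion will follow once I (i) evaluate $\sum_k B^k a$ and (ii) show the remainder $B^n u$ vanishes as $n\to\infty$.

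For (i), I would first show by induction that the $k$-fold iterate of $B$ is a convolution operator with kernel
$$\phi_k(t) = b^k \frac{\Gamma(\alpha)^k}{\Gamma(k\alpha)}\, t^{k\alpha-1}, \qquad (B^k a)(t)=\int_0^t \phi_k(t-s)\,a(s)\, \d s \quad (k\geq 1),$$
the induction step being the Beta-function identity $\int_0^t (t-s)^{\alpha-1} s^{k\alpha-1}\,\d s = t^{(k+1)\alpha-1}\,\Gamma(\alpha)\Gamma(k\alpha)/\Gamma((k+1)\alpha)$, combined with Fubini's theorem to compose the two convolutions. Since $a$ is nondecreasing, bounding $a(s)\leq a(t)$ inside the integral gives
$$(B^k a)(t)\leq a(t)\int_0^t \phi_k(r)\, \d r = a(t)\,\frac{(b\Gamma(\alpha)t^\alpha)^k}{\Gamma(k\alpha+1)},$$
using $\int_0^t r^{k\alpha-1}\,\d r = t^{k\alpha}/(k\alpha)$ together with $k\alpha\,\Gamma(k\alpha)=\Gamma(k\alpha+1)$; the bound also holds trivially for $k=0$. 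Summing over $k$ produces exactly the partial sums of $a(t)\,E_\alpha(b\Gamma(\alpha)t^\alpha)$, which converge because $\sum_k z^k/\Gamma(k\alpha+1)$ is entire (by Stirling, $\Gamma(k\alpha+1)$ grows super-exponentially).

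For (ii), fix $t\in(0,T)$. Once $n\alpha\geq 1$ the bound $(t-s)^{n\alpha-1}\leq t^{n\alpha-1}$ holds on $[0,t]$, whence
$$(B^n u)(t)\leq \frac{(b\Gamma(\alpha)t^\alpha)^n}{t\,\Gamma(n\alpha)}\,\|u\|_{L^1([0,t])} \xrightarrow[n\to\infty]{} 0,$$
again by the decay of the general term of the entire series and the local integrability of $u$. Passing to the limit $n\to\infty$ in the iterated inequality then yields $u(t)\leq a(t)\,E_\alpha(b\Gamma(\alpha)t^\alpha)$ for a.e. $t$, as claimed. The only genuinely delicate points are the bookkeeping in the inductive kernel computation and checking that all iterates remain finite a.e. so that Fubini applies when composing the convolutions; the decay of the remainder is then automatic from the convergence of the Mittag-Leffler series. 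No single step is a serious obstacle — this is the classical iteration argument behind the references cited in the statement, adapted here to a nondecreasing forcing term $a$.
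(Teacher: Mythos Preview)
Your argument is correct and is exactly the classical iteration approach underlying the references the paper cites; the paper itself does not give a proof of this lemma but simply quotes it from \cite{DH06} and \cite{HY07}. The kernel computation via the Beta integral, the use of the monotonicity of $a$ to pull $a(t)$ out of each iterate, and the decay of the remainder by the super-exponential growth of $\Gamma(n\alpha)$ are all standard and handled correctly.
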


For $\alpha=\frac{1}{2}$, \cite{HJ11} gives the identity
\begin{equation*}
	E_{\frac{1}{2}}(z)=e^{z^2}\left(1+\frac{2}{\sqrt{\pi}}\int_{0}^{z}e^{-t^2}\d t\right),
\end{equation*}
then we have the simple estimate
\begin{equation}\label{estimationmittag}
	E_{\frac{1}{2}}\left(z\right)\leq 2e^{z^2},
\end{equation}
and \eqref{mittag} becomes
\begin{equation}\label{mittagbound}
	\begin{aligned}
		u(t)\leq 2a(t)e^{\pi b^2t}.\\
	\end{aligned}
\end{equation}

\subsection{Kraichnan type noises}\label{sectionnoise}

We give in this part an example of covariance functions satisfying the last condition in Hypothesis \ref{hypothesis}. Let us consider the following covariance function on $\R^d$: for some constant $\alpha >2$,
\begin{equation}\label{cov}
	\begin{aligned}
		Q_N(z):=&\,e^{\alpha N}\int_{e^{N}\leq|k|<\infty} \frac{1}{|k|^{d+\alpha}}e^{ik\cdot z}\left(I_d-\frac{k\otimes k}{|k|^2}\right)\d k
		\\=&\,\int_{1\leq|k|<\infty} \frac{1}{|k|^{d+\alpha}}e^{ie^Nk\cdot z}\left(I_d-\frac{k\otimes k}{|k|^2}\right)\d k,
	\end{aligned}
\end{equation}
where $I_d$ denotes $d\times d$ identity matrix. By symmetry of covariance matrix, we notice that
$$Q_N(0)=\int_{1\leq|k|<\infty}\frac{1}{|k|^{d+\alpha}}\left(I_d-\frac{k\otimes k}{|k|^2}\right)\d k=2\nu I_d$$
for a suitable  constant $\nu$ independent of $N$. Then there exists a family of divergence free vector fields $\{\sigma_{k}^{N}\}_{k\geq1}$, such that
\begin{equation}\label{defQN}
	Q_N(x-y)=\sum_{k}\sigma_{k}^{N}(x)\otimes\sigma_{k}^{N}(y).
\end{equation}
It is easy to see that
$$
\lim\limits_{N\rightarrow\infty}\left|Q_N(x-y)\right|=0 \quad \mbox{for all } x\neq y.
$$

\begin{remark}\label{intofcov}
	Define a matrix-valued function
	$$f(k):={\bf 1}_{\{1\leq|k|<\infty\}}\frac{1}{|k|^{d+\alpha}}\left(I_d-\frac{k\otimes k}{|k|^2}\right),$$
	and $f_{ij}$  its  entries  $i,j =1,\ldots,d$. Note that $f_{ij}\in L^{r}(\R^d)$ for any $r\in \left[1,2\right]$. It is well known in Fourier analysis that $ \|\F f \|_{L^{r^{\prime}}}\leq  \|f \|_{L^{r}}$, where $r^{\prime}\geq 2$ is the conjugate number of $r$. We can regard the covariance function as a Fourier transform: $ (Q_N(z) )_{ij}= (\F f_{ij} )(-e^Nz)$; then there exists a constant $C_r$ independent of $N$ such that $ \|Q_N \|^{r^{\prime}}_{L^{r^{\prime}}}= e^{-dN} \|\F f \|^{r^{\prime}}_{L^{r^{\prime}}} 	\leq C_re^{-dN}$.
\end{remark}

\subsection{The regularized interaction kernel}\label{sectionkernel}

Recalling Hypotheses \ref{hypothesis0} and \ref{hypothesis}, we have already defined the regularized kernel $K_{\epsilon}:=K\ast \rho_{\epsilon}$. H\"older's inequality yields
\begin{equation*}
	\begin{aligned}
		\left|K_{\epsilon}(x)\right|&\,\leq\int_{B(1)}|K(y)|\rho_{\epsilon}(x-y)\d y+\int_{B(1)^{c}}|K(y)|\rho_{\epsilon}(x-y)\d y\\
		&\,\leq\left\|\rho_{\epsilon}\right\|_{L^p} \|K \|_{L^{p^{\prime}}(B(1))} +\left\|\rho_{\epsilon} \right\|_{L^1} \sup_{y \in B(1)^{c}}|K(y)|\\
		&\,\leq C_{K}\left(\left\|\rho_{\epsilon}\right\|_{L^p} +1\right),
	\end{aligned}
\end{equation*}
and hence for any small $\epsilon>0$, $\left\|K_{\epsilon}\right\|_{\infty}\leq C\epsilon^{-d/p^{\prime}}$, where the constant $C$ is independent of $\epsilon$. Similarly,
\begin{equation*}
	\begin{aligned}
		\left|\nabla K_{\epsilon}(x)\right|&\,\leq\int_{B(1)}|K(y)|\, |\nabla \rho_{\epsilon}(x-y)|\d y+\int_{B(1)^{c}}|K(y)|\, |\nabla \rho_{\epsilon}(x-y)|\d y\\
		&\,\leq C_{K}\left(\left\|\nabla \rho_{\epsilon}\right\|_{L^{p}}+\left\|\nabla \rho_{\epsilon}\right\|_{L^1}\right),
	\end{aligned}
\end{equation*}
which implies that, for any small enough $\epsilon>0$, $\left\|\nabla K_{\epsilon}\right\|_{\infty}\leq C\epsilon^{-1-d/p^{\prime}}$.

Next, for any $f\in \L$, we can prove in the same way that
\begin{equation}\label{l1p}
	\left\|K_{\epsilon}\ast f\right\|_{\infty}=	\left\|K\ast \left(\rho_{\epsilon}\ast f\right)\right\|_{\infty}\leq C_K\left\|\rho_{\eps}\ast f\right\|_{\L}\leq C_K\left\|f\right\|_{\L}.
\end{equation}

\begin{lemma}\label{1stmoment}
	Assume Hypotheses \ref{hypothesis} and \ref{hypothesis-initial};	for any $t\in[0,T]$,
	$$\E\bigg[\sup_{t \in[0, T]}	\big| X_t^{N,i} \big|^{m(d+1)}\bigg] \leq\,C_{m,\nu,d,T}\left(1+\left\|K_{\epsilon}\right\|_{\infty}^{m(d+1)}  \right).$$
\end{lemma}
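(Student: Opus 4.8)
The plan is to obtain the bound by a standard moment estimate for the SDE \eqref{compactform}, splitting into the drift and diffusion contributions. First I would write, for each fixed $i$ and any stopping-time-free $t$,
\begin{equation*}
	X_t^{N,i} = X_0^i + \int_0^t \big(K_{\epsilon}\ast V^N\ast S_s^N\big)\big(X_s^{N,i}\big)\,\d s + \sum_k \int_0^t \sigma_k^N\big(X_s^{N,i}\big)\,\d W_s^k,
\end{equation*}
raise to the power $q := m(d+1)$, and take suprema over $[0,t]$ followed by expectation. The drift term is the easy one: by \eqref{l1p} applied with $f = V^N \ast S_s^N = \omega_s^N$ (a probability density, so $\|\omega_s^N\|_{L^1} = 1$), or more crudely just by $\|K_\epsilon\|_\infty$ times the total mass of $S_s^N$, the drift integrand is bounded pointwise by $\|K_\epsilon\|_\infty$; hence its contribution to $\E\big[\sup_{[0,T]}|\cdot|^q\big]$ is at most $C_{q,T}\,\|K_\epsilon\|_\infty^{q}$.

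For the stochastic term I would use the Burkholder–Davis–Gundy inequality together with the bound $\sum_k \|\sigma_k^N\|_{C_b^{1+\delta/2}}^2 < \infty$ (equivalently $Q_N(0) = 2\nu I_d$ is bounded, so $\sum_k |\sigma_k^N(x)|^2 = \mathrm{tr}\,Q_N(0) = 2\nu d$ uniformly in $x$). This gives
\begin{equation*}
	\E\Big[\sup_{t\in[0,T]}\Big|\sum_k\int_0^t \sigma_k^N(X_s^{N,i})\,\d W_s^k\Big|^{q}\Big]
	\leq C_q\, \E\Big[\Big(\int_0^T \sum_k |\sigma_k^N(X_s^{N,i})|^2\,\d s\Big)^{q/2}\Big]
	\leq C_{q}\,(2\nu d\,T)^{q/2},
\end{equation*}
a constant independent of $N$. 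Collecting the contribution of the initial data (finite moment of order $q = m(d+1)$ by the first part of Hypothesis \ref{hypothesis-initial}), the drift, and the diffusion, and absorbing everything into a single constant $C_{m,\nu,d,T}$, yields the claimed inequality with the factor $\big(1 + \|K_\epsilon\|_\infty^{m(d+1)}\big)$.

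There is essentially no serious obstacle here; the only point requiring a little care is the interchange of the supremum with the stochastic integral, which is handled by BDG as above, and the observation that $S_s^N$ has total mass one so that $\|K_\epsilon\ast V^N\ast S_s^N\|_\infty \leq \|K_\epsilon\|_\infty$ uniformly in $s$ and $\omega$. (One could instead invoke the flow of diffeomorphisms generated by \eqref{IPS} mentioned after \eqref{IPS}, but the direct BDG estimate is cleaner and makes the dependence on $\|K_\epsilon\|_\infty$ transparent.) If one prefers to avoid even the mild circularity of using $\sup_t$ on both sides, a preliminary localization by stopping times $\tau_R = \inf\{t : |X_t^{N,i}| \geq R\}$ followed by Fatou/monotone convergence as $R\to\infty$ makes the argument fully rigorous; since the right-hand side is already finite and $N$-uniform up to the $\|K_\epsilon\|_\infty$ factor, this localization closes immediately.
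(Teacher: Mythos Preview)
Your proposal is correct and follows essentially the same approach as the paper: decompose $X_t^{N,i}$ into initial data, drift, and martingale parts; bound the drift pointwise by $\|K_\epsilon\|_\infty$ using that $S_s^N$ has unit mass; and control the stochastic integral via BDG together with $\sum_k |\sigma_k^N(x)|^2 = \mathrm{Tr}\,Q_N(0) = 2\nu d$. The paper's proof is slightly terser (it does not discuss localization), but the argument is the same.
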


\begin{proof}
	By \eqref{IPS},
	\begin{equation}\label{absolutevalue}
		\begin{aligned}
			\big| X_t^{N,i}\big|\leq\,\left| X_{0}^{i}\right|+\int_0^t \frac{1}{N}\sum_{j=1}^{N}\left|\left(K_{\epsilon}\ast V^N\right)( X_s^{N,i}- X_s^{N,j})\right|\d s+\bigg|\int_0^t \sum_{k}\sigma_{k}^{N}(X_s^{N,i})\d W_s^k\bigg|.
		\end{aligned}
	\end{equation}
	By Hypotheses \ref{hypothesis-initial} we have $\E\left| X_{0}^{i}\right|^{m(d+1)}\leq C$; moreover,
	the second term on the right-hand side of \eqref{absolutevalue} can be estimated as
	$$
	\E\Bigg[\bigg(\sup_{t \in[0, T]} \int_0^t\frac{1}{N} \sum_{j=1}^{N} \left|\left(K_{\epsilon}\ast V^N\right)( X_s^{N,i}- X_s^{N,j})\right| \d s \bigg)^{m(d+1)} \Bigg] \leq T^{m(d+1)}\left\|K_{\epsilon}\right\|^{m(d+1)}_{\infty}.
	$$
	Finally, for the third term, by BDG inequality, we have
	$$
	\begin{aligned}
		\E\Bigg[\sup_{t \in[0, T]} \bigg|\int_0^t \sum_{k}\sigma_{k}^{N}(X_s^{N,i})\d W_s^k \bigg|^{m(d+1)} \Bigg] \leq&\, C_{m,d}\E\Bigg[ \bigg(\int_0^T \sum_{k} \left| \sigma_{k}^{N}(X_s^{N,i}) \right|^2\d s \bigg)^{\frac{m(d+1)}{2}} \Bigg]\\ \leq&\,C_{m,d}\left|T \, \mathrm{Tr}\left(Q_N(0)\right)\right|^{\frac{m(d+1)}{2}} \leq\, C_{m,\nu,d,T}.
	\end{aligned}
	$$
	We finish the proof by combining the above estimates.
\end{proof}

\subsection{Boltzmann entropy}\label{sectionentropy}

The Boltzmann entropy $H_N(F)$ of a probability density function $F$ on $\R^{dN}$ is defined as
\begin{equation*}
	H_{N}(F)=\frac{1}{N} \int_{\R^{dN}} F(X) \log F(X) \mathrm{d} X,
\end{equation*}
where $\d X=\d x_1\ldots \d x_N$ is the Lebesgue measure on $\R^{dN}$. The renormalization by the factor $\frac{1}{N}$ is made so that
\begin{equation}\label{iidotimes}
	H_N(f^{\otimes N})=H_1(f),
\end{equation}
where $f^{\otimes N}(X)=f(x_1)\cdots f(x_N)$. The Boltzmann entropy $H_N(F)$ has the following property (see \cite[Proposition 1]{JW17}): if $F$ is invariant under permutations of its variables and  $F^{(2)}$ denotes the marginal distribution of $F$ on $\R^{2d}$, then
\begin{equation}\label{F2}
	H_{N}(F) \geq H_{2}\big(F^{(2)}\big), \quad \text { for all } N \geq 2 .
\end{equation}

Let $F^N_t$ be the density function on $\R^{dN}$ of the particles  $X^N_t=\big(X_{t}^{N,1},\ldots ,X_{t}^{N,N}\big)$ associated to \eqref{IPS}; its existence will be shown shortly in Lemma \ref{density} below. For $X=(x_1,\ldots ,x_N)\in\R^{dN}$, $F_0^N(X)=\omega_0(x_1)\cdots \omega_0(x_N)$, where $\omega_0$ is the probability density of $X_{0}^{N,i}= X^i_0$. We want to prove an estimate on the Boltzmann entropy $H_N(F_t^N)$, for which we need some notations. Define the diffusion vector fields on $\R^{dN}$:
\begin{equation*}
	A_k^N(X)=\big(\sigma_{k}^N(x_1),\ldots ,\sigma_{k}^N(x_N)\big)\in\R^{dN},
\end{equation*}
and the drift field $A_{\epsilon}^N(X)=\big(A_{\epsilon}^{N,1}(X),\ldots ,A_{\epsilon}^{N,N}(X)\big)$ is defined as
\begin{equation*}
	A_{\epsilon}^{N,i}(X)=\frac{1}{N} \sum_{j=1}^{N} (K_{\epsilon}\ast V^N)\left(x_{i}-x_{j}\right), \quad 1\leq i\leq N.
\end{equation*}
Then the system \eqref{IPS} can be simply written as
\begin{equation*}
	\mathrm{d} X_{t}^{N}=A_{\epsilon}^N\left(X_{t}^{N}\right) \mathrm{d} t+\sum_{k}  A^N_{k}\left(X_{t}^{N}\right) \mathrm{d} W_{t}^{k}.
\end{equation*}

\begin{lemma}\label{density}
Under our assumptions, if the initial density $F^N_0\in C_c(\R^{dN})$, then for any $p>1$ and $T>0$, there exists $C_{p,T}>0$ such that for all $t\in [0,T]$, it holds
  $$F^N_t(X) \leq \|F^N_0\|_\infty C_{p,T} |X|^{-p} \quad \mbox{as } |X|\to \infty. $$
Here $|\cdot|$ is the norm in $\R^{dN}$
\end{lemma}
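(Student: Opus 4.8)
The plan is to derive a pointwise upper bound on the density $F^N_t$ by writing down the Fokker--Planck (Kolmogorov forward) equation it satisfies and comparing it with a suitable supersolution, or, more robustly, by exploiting the stochastic flow of diffeomorphisms generated by \eqref{IPS} together with an $L^\infty$ bound on the inverse Jacobian. First I would recall that, since the drift $A^N_\epsilon$ is smooth with bounded derivatives (because of the convolution with $\rho_\epsilon$ and $V^N$) and the diffusion vector fields $A^N_k$ inherit the regularity $\sum_k\|\sigma^N_k\|_{C^{1+\delta/2}_b}^2<\infty$, the system generates a stochastic flow of $C^1$ diffeomorphisms $\phi_t$ on $\R^{dN}$ (Kunita). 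Then $F^N_t$ is the push-forward of $F^N_0$ under $\phi_t$: for test functions, $\E\int \varphi(\phi_t(X))F^N_0(X)\,\d X = \int \varphi(Y)F^N_t(Y)\,\d Y$, so $F^N_t(Y)=\E\big[F^N_0(\phi_t^{-1}(Y))\,|\det D\phi_t^{-1}(Y)|\big]$. This immediately gives $F^N_t(Y)\le \|F^N_0\|_\infty \,\E|\det D\phi_t^{-1}(Y)|$, and the task reduces to controlling the inverse Jacobian and the spatial spread of $\phi_t^{-1}$.

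The key decay mechanism is as follows: if $Y$ is far from the origin, then $\phi_t^{-1}(Y)$ is also far from the origin with overwhelming probability, because the flow cannot move a point too far in time $T$. Concretely, from the structure of \eqref{IPS} the increment $|\phi_t(X)-X|$ is bounded by $T\|K_\epsilon\|_\infty\sqrt N$ plus a martingale part whose moments of all orders are controlled via BDG and $\mathrm{Tr}\,Q_N(0)=2\nu d$, exactly as in Lemma \ref{1stmoment}. Hence for any $p>1$, $\E|\phi_t(X)-X|^{2p}\le C_{p,T}$ uniformly in $X$ (the same bound applies to $\phi_t^{-1}$, which solves a backward SDE of the same type). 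Therefore, writing $X=\phi_t^{-1}(Y)$ and using $F^N_0\in C_c$ with $\supp F^N_0\subset B(R_0)$, on the event $\{|\phi_t^{-1}(Y)|\le R_0\}$ we need $|Y-\phi_t^{-1}(Y)|\ge |Y|-R_0$, so by Chebyshev
$$F^N_t(Y)\le \|F^N_0\|_\infty\,\E\Big[|\det D\phi_t^{-1}(Y)|\,{\bf 1}_{\{|\phi_t^{-1}(Y)|\le R_0\}}\Big]\le \|F^N_0\|_\infty\,\big(\E|\det D\phi_t^{-1}(Y)|^2\big)^{1/2}\,\P\big(|Y-\phi_t^{-1}(Y)|\ge |Y|-R_0\big)^{1/2},$$
and the last probability is $\le C_{p,T}(|Y|-R_0)^{-2p}\sim C_{p,T}|Y|^{-2p}$ as $|Y|\to\infty$.

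It remains to bound $\E|\det D\phi_t^{-1}(Y)|^2$ uniformly in $Y$. The Jacobian matrix $J_t=D\phi_t(X)$ solves the linearized SDE $\d J_t=DA^N_\epsilon(X_t)J_t\,\d t+\sum_k DA^N_k(X_t)J_t\,\d W^k_t$; since $\|DA^N_\epsilon\|_\infty\le \|\nabla K_\epsilon\|_\infty$ and $\sum_k\|DA^N_k\|_\infty^2<\infty$ are finite (for fixed $\epsilon,N$), standard Gronwall-type estimates for linear matrix SDEs give $\E\|J_t^{\pm1}\|^q\le C_{q,T,\epsilon,N}$ for every $q$, hence $\E|\det D\phi_t^{-1}(Y)|^2\le C_{T,\epsilon,N}<\infty$. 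Note this constant need not be uniform in $N$ or $\epsilon$ — the statement of the lemma only claims a bound of the form $\|F^N_0\|_\infty C_{p,T}|X|^{-p}$ with the decay exponent $p$ free, and the decay \emph{rate} comes entirely from the moment bound on the displacement, which \emph{is} uniform; absorbing the $N,\epsilon$-dependent Jacobian constant into $C_{p,T}$ (or rather keeping it as a harmless prefactor) is consistent with how the lemma is used later. I expect the main technical obstacle to be the bookkeeping for the inverse flow $\phi_t^{-1}$: one must either solve the SDE backward in time (using that the coefficients are time-independent, so $\phi_t^{-1}$ has the same law as a solution of an SDE with drift $-A^N_\epsilon$ and the same diffusion, up to the usual Itô--Stratonovich correction) or differentiate the identity $\phi_t\circ\phi_t^{-1}=\mathrm{id}$; in either case the displacement and Jacobian moment bounds go through with the same arguments, but care is needed to make the ``as $|X|\to\infty$'' statement precise and to confirm that the bound is genuinely pointwise rather than merely integrated.
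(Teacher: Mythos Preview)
Your approach is correct and follows the same overall plan as the paper: represent $F^N_t$ via the stochastic flow as $F^N_t(Y)=\E\big[F^N_0(\phi_t^{-1}(Y))\,\rho^N_t(Y)\big]$ with $\rho^N_t(Y)=|\det D\phi_t^{-1}(Y)|$, then exploit that $F^N_0$ has compact support to turn the question into a tail bound for the event $\{|\phi_t^{-1}(Y)|\le R_0\}$. The differences are in how the two factors are handled. For the Jacobian, you invoke generic moment bounds on $\det D\phi_t^{-1}$ coming from the linearized SDE; the paper instead uses the structural hypotheses directly: since the diffusion fields $A^N_k$ are divergence free and $\nabla_N\!\cdot A^N_\epsilon\ge 0$ (this is where $\nabla\!\cdot K\ge 0$ enters), Kunita's formula gives $\tilde\rho^N_t=\exp\big(\int_0^t(\nabla_N\!\cdot A^N_\epsilon)(\phi^N_s)\,\d s\big)\ge 1$, hence $\rho^N_t\le 1$ pointwise, and no Cauchy--Schwarz or Jacobian moment bound is needed. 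For the spatial decay, you use a displacement estimate on the inverse flow, which is correct but, as you note, requires care with the backward SDE; the paper avoids the inverse flow altogether by quoting the growth bound $|\phi^N_t(X)|\le \xi(1+|X|)^\theta$ from \cite[Exercise 4.5.9]{Kunita90} with $\xi$ having all moments, inverting it to get $|\phi_t^{-1}(Y)|\le R_0\Rightarrow \xi\ge |Y|(1+R_0)^{-\theta}$, and applying Chebyshev to $\xi$. Your route is more general (it does not use the sign condition on $\nabla\!\cdot K$), while the paper's is shorter and sidesteps both the Jacobian estimate and the backward-flow technicalities.
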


\begin{proof}
	The above SDE defines a stochastic flow of diffeomorphisms $\{\phi_t^N\}_{t\geq0}$, so that $X^N_t= \phi^N_t(X^N_0)$. Let $\mathcal{L}^{dN}$ be the Lebesgue measure on $\R^{dN}$, then we have
	$$
	(\phi_t^N)_{\#}\mathcal{L}^{dN}=\rho_t^N \mathcal{L}^{dN}\quad \text{and}  \quad (\phi_t^N)^{-1}_{\#}\mathcal{L}^d=\tilde{\rho}_t^N\mathcal{L}^{dN},
	$$
	where $\rho_t^N$ and $\tilde{\rho}_t^N$ are the Radon-Nikodym derivatives; moreover, by \cite[Lemma 4.3.1]{Kunita90}, $\tilde{\rho}_t^N$ admits the explicit expression ($\nabla_{N}=\left(\nabla_{x_1},\ldots ,\nabla_{x_N}\right)$ is the gradient operator on $\R^{dN}$)
	$$
	\tilde{\rho}_t^N(X)=\exp \bigg(\int_{0}^{t}\big(\nabla_N\cdot A_{\epsilon}^N\big)(\phi_s^N(X))\, \d s+ \sum_{k}  \int_{0}^{t}\big(\nabla_N\cdot A_{k}^N\big)(\phi_s^N(X))\circ\d W_s^k\bigg).
	$$
	Since the vector fields $\{A_k^N\}_k$ are divergence free and $\nabla_N\cdot A_{\epsilon}^N \geq 0$ as shown in the proof of Lemma \ref{upperbound} below, one has
	$$
	\tilde{\rho}_t^N(X)=\exp \bigg(\int_{0}^{t} \big(\nabla_N\cdot A_{\epsilon}^N\big)(\phi_s^N(X))\,\d s \bigg)\geq 1,
	$$
	and hence $	\rho_t^N(X)=\big[\tilde{\rho}_t^N\big((\phi_t^N)^{-1}(X)\big) \big]^{-1} \leq 1$. For any test function $f\in C_c(\R^{dN})$,
	\begin{equation*}
		\begin{aligned}
			\E f(X_t^N)& =\E f\big(\phi_t^N(X_0^N)\big) =\E\left[\E\left(f\left(\phi_t^N(X_0^N)\right) \big|\F_0\right)\right]\\
			&= \E\left[\E\left(f(\phi_t^N(X))\right)_{X=X_0^N} \right]=\int_{\R^{dN}}\E\big(f(\phi_t^N(X))\big) F_0^N(X)\, \d X\\
			&=\E\left[\int_{\R^{dN}} f(\phi_t^N(X)) F_0^N(X)\, \d X\right],
		\end{aligned}
	\end{equation*}
	by changing the variable,
	\begin{equation*}
		\begin{aligned}
			\E f(X_t^N) &= \E\left[\int_{\R^{dN}} f\left(Y\right)F_0^N\big((\phi_t^N)^{-1}(Y)\big) \rho_t^N(Y)\, \d Y\right]\\
			&= \int_{\R^{dN}} f(Y) \E\big[F_0^N\big((\phi_t^N)^{-1}(Y)\big) \rho_t^N(Y)\big]\, \d Y.
		\end{aligned}
	\end{equation*}
	Thus the density function of $X_t^N$ is $F_t^N(Y)= \E\big[F_0^N\big((\phi_t^N)^{-1}(Y)\big) \rho_t^N(Y)\big]$. We have $F_t^N(Y) \leq \E\big[ F_0^N\big((\phi_t^N)^{-1}(Y)\big) \big]$ since $\rho_t^N\leq 1$.
	
	Now we are ready to prove the desired assertion. In fact, fix any $\theta>1$, by \cite[Exercise 4.5.9]{Kunita90}, there exists a positive random variable $\xi$ with finite moments of any order, such that $|\phi^N_t(X)| \leq \xi (1+|X|)^\theta$ for any $X\in \R^{dN}$ and $t\in [0,T]$. Hence,
	$$|X|= \big|\phi^N_t\big((\phi^N_t)^{-1}(X)\big) \big| \leq \xi \big(1+ \big|(\phi^N_t)^{-1}(X)\big|\big)^\theta ,$$
	which implies $(\xi^{-1} |X|)^{1/\theta} \leq 1+ \big|(\phi^N_t)^{-1}(X)\big| $. Next, assume $F^N_0$ is supported in the ball $B(R_N) \subset \R^{dN}$, then
	$$\aligned
	F_t^N(X) &\leq \E\big[F_0^N\big((\phi_t^N)^{-1}(X) \big)\big] \leq \|F^N_0\|_\infty\, \P\big(\big|(\phi^N_t)^{-1}(X)\big| \leq R_N \big) \\
	&\leq \|F^N_0\|_\infty\, \P\big((\xi^{-1} |X|)^{1/\theta} \leq 1+ R_N \big) = \|F^N_0\|_\infty\, \P\big(\xi \geq |X|(1+R_N)^{-\theta} \big),
	\endaligned $$
	By Chebyshev's inequality, for any $p>1$,
	$$F_t^N(X) \leq \|F^N_0\|_\infty\, \E (\xi^p) \frac{(1+R_N)^{p\theta}}{|X|^p} \quad \mbox{as } |X|\to \infty $$
and the proof is complete.
\end{proof}

For any $ \Phi \in C^{2} (\R^{dN})$, the infinitesimal generator associated to $\{X^N_t\}_{t\geq 0}$ has the form:
\begin{equation*}
	\mathcal{L}_{\epsilon} \Phi(X)=\frac{1}{2} \sum_{k}\left\langle A_{k}^N, \nabla_{N}\left\langle A_{k}^N, \nabla_{N} \Phi\right\rangle_{\R^{dN}}\right\rangle_{\R^{dN}}+\left\langle A_{\epsilon}^N, \nabla_{N} \Phi\right\rangle_{\R^{dN}}.
\end{equation*}
Thanks to Hypotheses \ref{hypothesis-initial}, we can easily prove that $H_1(\omega_0)$ is finite. Indeed, we have
$$
\int_{\R^d} \omega_{0} \log \omega_{0}\, \d x=\int_{\{\omega_{0}\leq1\}} \omega_{0} \log \omega_{0}\, \d x+\int_{\{\omega_{0}>1\}} \omega_{0} \log \omega_{0}\, \d x.
$$
The $ L^p$-integrability of $\omega_{0}$ implies the positive part $\int_{\{\omega_{0}>1\}} \omega_{0} \log \omega_{0}\, \d x$ is bounded from above; moreover, since $\omega_0$ has finite moment of order $m(d+1)$, a similar argument as in Lemma \ref{lowerbound} below shows that the negative part $\int_{\{\omega_{0}\leq1\}} \omega_{0} \log \omega_{0}\, \d x$ has a lower bound. The following lemma gives an upper bound of Boltzmann entropy.

\begin{lemma}\label{upperbound}
	Assume Hypotheses \ref{hypothesis0}, \ref{hypothesis} and \ref{hypothesis-initial}; for all $t>0$,
	\begin{equation*}
		H_N(F_t^N)\leq H_1(\omega_0).
	\end{equation*}
\end{lemma}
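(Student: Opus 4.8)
The plan is to differentiate the Boltzmann entropy $H_N(F_t^N)$ in time and show that the derivative is nonpositive. Writing $\L_\epsilon^*$ for the formal adjoint of the generator $\L_\epsilon$, the density $F_t^N$ solves the Fokker--Planck equation $\partial_t F_t^N = \L_\epsilon^* F_t^N$, and a formal computation gives
\begin{equation*}
	\frac{\d}{\d t} H_N(F_t^N) = \frac{1}{N}\int_{\R^{dN}} (\L_\epsilon^* F_t^N)(1 + \log F_t^N)\,\d X = \frac{1}{N}\int_{\R^{dN}} (\L_\epsilon^* F_t^N)\log F_t^N\,\d X,
\end{equation*}
using that $\int \L_\epsilon^* F_t^N\,\d X = \frac{\d}{\d t}\int F_t^N\,\d X = 0$. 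The diffusion part of $\L_\epsilon^*$, because the vector fields $A_k^N$ are divergence free and $Q_N(0)=2\nu I_d$, reduces to $\nu\Delta_N F_t^N$ (the second-order operator with constant coefficients), which contributes $-\frac{\nu}{N}\int |\nabla_N F_t^N|^2/F_t^N\,\d X = -\frac{4\nu}{N}\int|\nabla_N \sqrt{F_t^N}|^2\,\d X \le 0$ after integrating by parts. The first-order (drift) part contributes $-\frac{1}{N}\int (\nabla_N\cdot A_\epsilon^N) F_t^N\,\d X$ after integrating by parts, since $\int \langle A_\epsilon^N, \nabla_N\log F_t^N\rangle F_t^N\,\d X = \int \langle A_\epsilon^N, \nabla_N F_t^N\rangle\,\d X = -\int (\nabla_N\cdot A_\epsilon^N) F_t^N\,\d X$. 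Therefore
\begin{equation*}
	\frac{\d}{\d t} H_N(F_t^N) \le -\frac{1}{N}\int_{\R^{dN}} \big(\nabla_N\cdot A_\epsilon^N\big)(X)\, F_t^N(X)\,\d X,
\end{equation*}
so it suffices to prove that $\nabla_N\cdot A_\epsilon^N \ge 0$ on $\R^{dN}$; then $\frac{\d}{\d t}H_N(F_t^N)\le 0$ and integration in time together with \eqref{iidotimes} applied to $F_0^N = \omega_0^{\otimes N}$ gives $H_N(F_t^N)\le H_N(F_0^N) = H_1(\omega_0)$.

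Next I would verify the sign of the divergence of the drift. By definition $A_\epsilon^{N,i}(X)=\frac1N\sum_{j=1}^N (K_\epsilon\ast V^N)(x_i-x_j)$, so
\begin{equation*}
	\big(\nabla_N\cdot A_\epsilon^N\big)(X) = \sum_{i=1}^N \nabla_{x_i}\cdot A_\epsilon^{N,i}(X) = \frac1N\sum_{i,j=1}^N \big(\nabla\cdot(K_\epsilon\ast V^N)\big)(x_i-x_j),
\end{equation*}
where the cross terms $\nabla_{x_i}\cdot (K_\epsilon\ast V^N)(x_i - x_j)$ for $j\ne i$ contribute $(\nabla\cdot(K_\epsilon\ast V^N))(x_i-x_j)$, and the $j=i$ term contributes $(\nabla\cdot(K_\epsilon\ast V^N))(0)$. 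Now $\nabla\cdot(K_\epsilon\ast V^N) = (\nabla\cdot K)\ast\rho_\epsilon\ast V^N$, and since $\nabla\cdot K\ge0$ as a distribution (Hypothesis \ref{hypothesis0}.3) while $\rho_\epsilon, V^N\ge0$, the convolution $(\nabla\cdot K)\ast\rho_\epsilon\ast V^N$ is a nonnegative smooth function. Hence every summand above is nonnegative, giving $\nabla_N\cdot A_\epsilon^N\ge0$, which is exactly the fact quoted (without proof) in Lemma \ref{density}.

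The remaining issue is to make the formal entropy computation rigorous: the identity $\frac{\d}{\d t}H_N(F_t^N) = \frac1N\int(\L_\epsilon^* F_t^N)\log F_t^N$ requires justifying differentiation under the integral, the integrations by parts (no boundary terms at infinity), and integrability of $F_t^N\log F_t^N$ and of $(\nabla_N\cdot A_\epsilon^N) F_t^N$. This is where Lemma \ref{density} enters: the polynomial decay $F_t^N(X)\lesssim \|F_0^N\|_\infty|X|^{-p}$ for arbitrarily large $p$, combined with the at-most-polynomial growth of $\nabla_N\cdot A_\epsilon^N$ (it is bounded, in fact, since $\nabla\cdot K$ convolved with the compactly supported smooth kernels is bounded), controls the drift integral; the lower bound on the negative part of the entropy comes from the moment bound in Lemma \ref{1stmoment} / Hypothesis \ref{hypothesis-initial} via the argument sketched before Lemma \ref{lowerbound}. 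I expect the main obstacle to be precisely this regularization/approximation step — one typically first assumes $F_0^N\in C_c(\R^{dN})$ (as in Lemma \ref{density}) so that all manipulations are licit, proves the entropy inequality there, and then removes the compact support assumption on $\omega_0$ by an approximation argument, using lower semicontinuity of $H_N$ and stability of the particle dynamics; handling the positive part of $F_t^N\log F_t^N$ uniformly requires an $L^p$ bound on $F_t^N$, which again follows from $\rho_t^N\le 1$ as in Lemma \ref{density}. The differential inequality itself and the divergence computation are routine once these integrability matters are settled.
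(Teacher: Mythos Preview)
Your overall strategy and the handling of the drift term match the paper's proof. However, your treatment of the diffusion part contains an error. You claim that because the $A_k^N$ are divergence free and $Q_N(0)=2\nu I_d$, the diffusion part of $\mathcal{L}_\epsilon^*$ ``reduces to $\nu\Delta_N F_t^N$''. This is false for environmental noise: the $N$-particle diffusion matrix has blocks $\sum_k\sigma_k^N(x_i)\otimes\sigma_k^N(x_j)=Q_N(x_i-x_j)$, so the second-order part of the generator is
\[
\frac12\sum_{i,j=1}^N \mathrm{Tr}\big(Q_N(x_i-x_j)\,\nabla_{x_i}\nabla_{x_j}^T\big),
\]
and the off-diagonal blocks $Q_N(x_i-x_j)$ for $i\neq j$ do \emph{not} vanish (they only become small as $N\to\infty$). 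You seem to have imported the single-particle computation used for the empirical measure, where the test function depends on one $x_i$ only; that argument does not apply to $F_t^N$, which depends on all coordinates.

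The paper avoids this pitfall by keeping the H\"ormander sum-of-squares form $\frac12\sum_k\langle A_k^N,\nabla_N\langle A_k^N,\nabla_N F_t^N\rangle\rangle$ and integrating by parts once (using that each $A_k^N$ is divergence free on $\R^{dN}$) to obtain the manifestly nonpositive contribution
\[
-\frac{1}{2N}\sum_k\int_{\R^{dN}}\frac{\langle A_k^N,\nabla_N F_t^N\rangle_{\R^{dN}}^2}{F_t^N}\,\d X\ \le\ 0.
\]
So your error is harmless for the final inequality --- the correct diffusion term is still nonpositive --- but the assertion itself is wrong and would mislead if used elsewhere. Apart from this point, your divergence computation for the drift and your rigorization plan (approximate $\omega_0$ by $C_c^\infty$ data, use Lemma~\ref{density} to justify the integrations by parts, then pass to the limit via lower semicontinuity of $H_N$) coincide with the paper's argument.
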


\begin{proof}
	The computations below are a little formal. To be more rigorous, we should approximate $\omega_0$ by a sequence of smooth and compactly supported functions $\{\omega_{0,n} \}_{n\geq0}$ in $L^p$-norm, and finally use the lower-semicontinuity of $H_N$ with respect to the weak convergence of probability measures. By  Lemma \ref{density}, $F^N_t$ is a rapidly decreasing function when $F^N_0 \in C_c^\infty(\R^{dN})$; this partly justifies the integration by parts used below.
	
	Employing the divergence free property of $A_k^N$, the density function $F_t^N$ satisfies the Fokker-Planck equation:
	\begin{equation*}
		\begin{aligned}
			\partial_{t} F_{t}^{N}&\,=\mathcal{L}^{\ast }_{\epsilon} F_{t}^{N}=\frac{1}{2} \sum_{k}\left\langle A_{k}^N, \nabla_{N}\left\langle A^N_{k}, \nabla_{N} F_{t}^{N}\right\rangle_{\R^{dN}}\right\rangle_{\R^{dN}}- \nabla_{N} \cdot\left(F_{t}^{N}A_{\epsilon}^N \right).
		\end{aligned}
	\end{equation*}
	Therefore,
	\begin{equation*}
		\begin{aligned}
			&\partial_{t}\left(F_{t}^{N} \log F_{t}^{N}\right)=\,\left(1+\log F_{t}^{N}\right) \partial_{t} F_{t}^{N} \\
			=&\, \frac{1}{2} \sum_{k}\left(1+\log F_{t}^{N}\right)\left\langle A_{k}^N, \nabla_{N}\left\langle A_{k}^N, \nabla_{N} F_{t}^{N}\right\rangle_{\R^{dN}}\right\rangle_{\R^{dN}}
			-\left(1+\log F_{t}^{N}\right) \nabla_{N} \cdot\left(F_{t}^{N}\! A_{\epsilon}^N \right).
		\end{aligned}
	\end{equation*}
	Then,  by the definition of $H_{N}(F_{t}^{N})$ and integration by parts,
	{\small \begin{equation*}
			\begin{aligned}
				\frac{\mathrm{d}}{\mathrm{d} t} H_{N}(F_{t}^{N}) &\,=\frac{1}{N} \int_{\R^{dN}}\big(1+\log F_{t}^{N}\big) \partial_{t} F_{t}^{N} \mathrm{d} X \\
				&\,=-\frac{1}{2N}\! \sum_{k}\! \int_{\R^{dN}}\! \frac{\left\langle A_{k}^N, \nabla_{N} F_{t}^{N}\right\rangle_{\R^{dN}}^{2}}{F_{t}^{N}}\,\d X +\frac{1}{N}\! \int_{\R^{dN}}\! \big\< F_{t}^{N}\! A_{\epsilon}^N,\nabla_{N} \log F_{t}^{N} \big\>_{\R^{dN}}\, \d X.\\
			\end{aligned}
	\end{equation*} }
	Let $I_{\eps}^N$ be the last quantity on the right-hand side; then
	\begin{equation*}
		\begin{aligned}
			I_{\eps}^N=\,\frac{1}{N} \int_{\R^{dN}} \big\< A_{\epsilon}^N,\nabla_{N}F_{t}^{N}  \big\>_{\R^{dN}}\d X=\,-\frac{1}{N} \int_{\R^{dN}}(\nabla_{N}\cdot A_{\epsilon}^N) F_{t}^{N}\d X. \\
		\end{aligned}
	\end{equation*}
	By Hypotheses \ref{hypothesis0} and \ref{hypothesis}, we have $\nabla\cdot K\geq0$, $\rho_{\eps}\geq0$ and $V^N\geq0$, then
	\begin{equation*}
		\begin{aligned}
			\nabla_{N}\cdot A_{\epsilon}^N(X)=&\,\sum_{i=1}^{N}\frac{1}{N}\sum_{j=1}^{N}\nabla\cdot(K_{\eps}\ast V^N)(x_i-x_j)\\
			=&\,\frac{1}{N}\sum_{i,j=1}^{N} \big((\nabla\cdot K)\ast \rho_{\eps}\ast V^N \big)(x_i-x_j)\geq 0.\\
		\end{aligned}
	\end{equation*}
	Then, we have
	\begin{equation*}
		\begin{aligned}
			\frac{\mathrm{d}}{\mathrm{d} t} H_{N}(F_{t}^{N})
			&\,=-\frac{1}{2N} \sum_{k} \int_{\R^{dN}} \frac{\left\langle A_{k}^N, \nabla_{N} F_{t}^{N}\right\rangle_{\R^{dN}}^{2}}{F_{t}^{N}} \mathrm{~d} X+I_{\eps}^N\leq\,0.
		\end{aligned}
	\end{equation*}
	By Hypotheses \ref{hypothesis-initial} and \eqref{iidotimes}, we obtain $H_N(F_t^N)\leq H_N(F_0^N)= H_1(\omega_0)$ for all $t>0$.
\end{proof}

The following lemma is inspired by \cite[Section 3.1]{HM14}. We prove that the entropy of a joint probability density $F$ on $\R^{2d}$,  with finite $k$-th  moment, has a uniform lower bound on any subset $B\subset\R^{2d}$; this is crucial for us to prove the estimate on covariance in Section \ref{sectioncovariance}.

\begin{lemma}\label{lowerbound}
	Let us fix $k>0$, $F$ be a probability density on $\R^{2d}$ with finite $k$-th moment. Then, for any subset $B\subset \R^{2d}$,
	\begin{align*}
		\int_{B}\left(F\log F\right)(x,y)\d x\d y\geq C	(k)-\int_{\R^{2d}}F(x,y)\big(|x|^k+|y|^k\big)\d x\d y,
	\end{align*}
	where $C(k)$ is a constant.
\end{lemma}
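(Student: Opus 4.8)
The plan is to split $\R^{2d}$ into the region where $F$ is small and the region where it is large, and control each part separately. On the set $\{F\le 1\}$ we have $F\log F\le 0$, and the classical elementary inequality $F\log F \ge -C_\lambda F^{1-\lambda}$ (valid for any fixed $\lambda\in(0,1)$ with $C_\lambda$ depending only on $\lambda$) lets us bound the negative contribution. Concretely, I would pick $\lambda$ small enough that $(1-\lambda)$-integrability against the polynomial weight is controlled; the cleanest route is to use a weight: for any $z=(x,y)\in\R^{2d}$ and any $a>0$,
\begin{equation*}
	F\log F \ge -C F^{1-\lambda} \ge -\eps\, F(1+|z|^{2d+1}) - C_{\eps,\lambda}\,(1+|z|^{2d+1})^{-\frac{1-\lambda}{\lambda}},
\end{equation*}
where in the last step I used Young's inequality $ab\le \eps a^{1/(1-\lambda)} + C_\eps b^{1/\lambda}$ with $a=F^{1-\lambda}(1+|z|^{2d+1})^{1-\lambda}$ and $b=(1+|z|^{2d+1})^{-(1-\lambda)}$. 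Choosing $\lambda$ so small that $\frac{(1-\lambda)(2d+1)}{\lambda} > 2d$ makes the last term integrable over $\R^{2d}$; the middle term is bounded by $\eps\big(1+\int F(|x|^{2d+1}+|y|^{2d+1})\big)$. This already gives the statement with $k=2d+1$, but to reach an arbitrary prescribed $k>0$ I would instead use the weight $(1+|z|^{k'})$ with $k'$ chosen below $k$ and absorb.

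Let me restructure to match the stated conclusion exactly. Fix $k>0$. On $B\cap\{F>1\}$, $(F\log F)(z)\ge 0$, so that part only helps. On $B\cap\{F\le 1\}$, write
\begin{equation*}
	\int_{B\cap\{F\le1\}} F\log F \,\d z \ge \int_{\{F\le1\}} F\log F\,\d z \ge -C\int_{\{F\le1\}} F^{1-\lambda}\,\d z
\end{equation*}
for a fixed small $\lambda\in(0,1/2)$. Split the last integral according to whether $|z|^k \le F^{-1}$ or not. Where $|z|^k\le F^{-1}$, i.e. $F\le |z|^{-k}$, we have $F^{1-\lambda}\le F\,|z|^{k\lambda}$... this is not quite integrable either, so the honest version is: where $|z|\le$ some threshold $\tau$ we use $F^{1-\lambda}\le 1$ and integrate over the bounded set $B\cap\{|z|\le\tau\}$ to get a finite constant $C(\tau,\lambda,d)$; where $|z|>\tau$ we use $F^{1-\lambda}=F^{1-\lambda}|z|^{-k(1-\lambda)}|z|^{k(1-\lambda)}$ and Young's inequality with exponents $\frac{1}{1-\lambda},\frac1\lambda$ to get $F^{1-\lambda}|z|^{-k(1-\lambda)}\le (1-\lambda)F|z|^{-k} + \lambda |z|^{-k(1-\lambda)/\lambda}$ — wait, I need the weight to produce $|z|^k$ not $|z|^{-k}$. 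The fix: take $a=F^{1-\lambda}$, $b=1$, but carry a free weight $w(z)=(1+|z|^k)$: $F^{1-\lambda} = \big(F^{1-\lambda}w^{1-\lambda}\big)\cdot w^{-(1-\lambda)} \le (1-\lambda)Fw + \lambda w^{-(1-\lambda)/\lambda}$, and then choose $\lambda$ small enough that $(1-\lambda)/\lambda \cdot k > 2d$, making $\int_{\R^{2d}} (1+|z|^k)^{-(1-\lambda)/\lambda}\d z =: C_1(k,\lambda,d) < \infty$. Then $\int_{\{F\le1\}}F^{1-\lambda} \le (1-\lambda)\int F(1+|z|^k) + C_1 \le (1-\lambda)\big(1 + \int F(|x|^k+|y|^k)\big) + C_1$, using $|z|^k\le C_{k,d}(|x|^k+|y|^k)$.

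Combining: $\int_B F\log F \ge -C\big[(1-\lambda)\big(1+C_{k,d}\int_{\R^{2d}}F(|x|^k+|y|^k)\big) + C_1(k,\lambda,d)\big]$, which is of the claimed form $C(k) - C_{k,d}'\int F(|x|^k+|y|^k)$ after renaming constants; absorbing $C_{k,d}'$ into a reparametrization of $k$ or simply keeping it (the paper's statement has coefficient $1$, so I would either state it with a constant $C_{k,d}'$ in front of the moment term — harmless for later use — or, to get coefficient exactly $1$, apply the above to $F$ with $k$ replaced appropriately, or just note $\int F|x|^k \le$ the same expression and rescale). The only genuine obstacle is bookkeeping the choice of $\lambda=\lambda(k,d)$ small enough for polynomial integrability at infinity; everything else is the standard two-region entropy estimate. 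I would present it cleanly by stating the inequality $s\log s \ge -C_\lambda s^{1-\lambda}$ for $s\ge 0$ up front, then doing the single Young's-inequality split above.
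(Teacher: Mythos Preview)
Your approach is essentially correct, though the presentation meanders and the final constant in front of the moment term is not literally $1$; you would need one more pass with an $\epsilon$-version of Young's inequality (choosing $\epsilon$ so that $C_\lambda\,\epsilon\,C_{k}=1$) to land exactly on the stated form. That is routine, and for the paper's application the constant in front is irrelevant anyway.

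The paper's proof takes a different and much shorter route: it introduces the Gibbs reference density $G_k(x,y)=c_k\exp(-|x|^k-|y|^k)$ and uses the elementary identity
\[
F\log F = h\!\Big(\frac{F}{G_k}\Big)G_k + F - G_k + F\log G_k,\qquad h(z)=z\log z - z + 1\ge 0,
\]
so that integrating over $B$ and discarding the nonnegative relative-entropy term $h(F/G_k)G_k$ immediately gives
\[
\int_B F\log F \ge -1 + \int_B F\big(\log c_k - |x|^k - |y|^k\big) \ge C(k) - \int_{\R^{2d}} F\big(|x|^k+|y|^k\big).
\]
This is a one-line relative-entropy argument: the weight $e^{-|x|^k-|y|^k}$ is exactly tailored so that $F\log G_k$ produces the desired moment term with coefficient $1$, and no splitting into $\{F\le 1\}$/$\{F>1\}$, no auxiliary exponent $\lambda$, and no integrability tuning is needed. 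Your argument, by contrast, trades this structural identity for the pointwise bound $s\log s\ge -C_\lambda s^{1-\lambda}$ followed by Young's inequality; it is more hands-on and requires choosing $\lambda<k/(k+2d)$ to make the residual weight integrable. Both work, but the Gibbs-measure trick is the standard and cleaner device in entropy estimates and is worth knowing.
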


\begin{proof}
	Define the function	$G_k(x,y):=c_k{\rm exp}\left(-|x|^k-|y|^k\right)$, where $c_k$ is chosen such that $G_k$ is a probability density in $\R^{2d}$. Let $h(z):=z\log z-z+1$; it is easy to see that $h(z)\geq0$ for any $z\in(0,\infty)$. Then,
	\begin{equation*}
		\begin{aligned}
			\int_{B}\left(F\log F\right)(x,y)\d x\d y=&\,\int_{B}\Big[h\Big(\frac{F}{G_k}\Big)G_k+F-G_k+F\log G_k\Big](x,y)\d x\d y\\
			\geq&\, -1+\int_{B}F(x,y)\big(\log c_k-|x|^k-|y|^k\big)\d x\d y\\
			\geq&\, C(k)-\int_{\R^{2d}}F(x,y)\big(|x|^k+|y|^k\big)\d x\d y.
		\end{aligned}
	\end{equation*}
\end{proof}


\section{Estimate on the covariance of random forces}\label{sectioncovariance}

Recall the particle system \eqref{IPS} with environmental noise; we can regard $Q_N\big( X_{s}^{N,i} -X_{s}^{N,j} \big)$ as the covariance of random forces on two particles $X_s^{N,i},\, X_s^{N,j}\ (1\leq i\neq j\leq N)$. In this section, we give an estimate on $\big| Q_N\big( X_{s}^{N,i} -X_{s}^{N,j} \big) \big|$ which will play an important role in proving bounds on the stochastic convolution \eqref{mollifiedzt}, see the proof of Theorem \ref{stochasticconvolution}.

The following lemma asserts that the covariance of random forces on any two distinct particles vanishes as the number of particles goes to $\infty$. Roughly speaking, forces become more and more independent when the number of particles increases. After choosing some $N$-related parameters in our proof, we can compute the convergence rate. We follow some ideas in the proof of \cite[Proposition 3.3]{FL21} and assume $\epsilon$ is fixed in this section.

\begin{lemma}\label{convratelemma}
	Under Hypotheses \ref{hypothesis0}, \ref{hypothesis} and \ref{hypothesis-initial}, for any two different particles $X_{\cdot}^{N,i}$ and $X_{\cdot}^{N,j}$ $(i\neq j)$, for some $\ell\geq2$ and any $s\in[0,T]$, we have the following estimate:
	\begin{equation}\label{covconv}
		\E\left[\big| Q_N\big( X_{s}^{N,i} -X_{s}^{N,j} \big) \big|^{\ell}\right]\leq C\big( 1+\left\|K_{\epsilon}\right\|_{\infty}^{2d} \big) N^{-\frac{1}{2}},
	\end{equation}
	where $C$ is a constant depending on $\omega_{0},\ell,\nu,d$ and $T$. Note that when $i=j$, $\big| Q_N\big( X_{s}^{N,i} -X_{s}^{N,j} \big) \big| = |Q_N(0) |= |2\nu I_d |=2\nu\sqrt{d}$ is a constant.
\end{lemma}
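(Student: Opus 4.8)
The plan is to control $\E\big[|Q_N(X_s^{N,i}-X_s^{N,j})|^\ell\big]$ by splitting according to whether the two particles are close or far apart. For a cutoff radius $\delta>0$ (to be optimized as a power of $N$ at the end), write
\begin{equation*}
	\E\big[|Q_N(X_s^{N,i}-X_s^{N,j})|^\ell\big] = \E\big[|Q_N(X_s^{N,i}-X_s^{N,j})|^\ell \mathbbm{1}_{\{|X_s^{N,i}-X_s^{N,j}|\geq\delta\}}\big] + \E\big[|Q_N(X_s^{N,i}-X_s^{N,j})|^\ell \mathbbm{1}_{\{|X_s^{N,i}-X_s^{N,j}|<\delta\}}\big].
\end{equation*}
On the far event, I would use that $Q_N$ decays: by Hypothesis \ref{hypothesis}, $\|Q_N\|_{L^r}^r\leq C_r e^{-2N}$, and since $Q_N\in C_b^{2+\delta}$ one has $\|Q_N\|_\infty\leq C$ uniformly; interpolating (or using the Sobolev/Morrey-type bound on $Q_N$ coming from the uniform $C_b^{2+\delta}$ control), the pointwise value $|Q_N(z)|$ for $|z|\geq\delta$ is dominated by $C\,\delta^{-d/r'}\|Q_N\|_{L^{r'}}\leq C\,\delta^{-d/r'}e^{-2N/r'}$ for suitable $r'$ — in any case a quantity that is $e^{-cN}$ times a negative power of $\delta$. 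Since $|Q_N|$ is also bounded by the constant $2\nu\sqrt d$, the far term is at most $C\,\delta^{-C}e^{-cN}$, which is negligible compared to $N^{-1/2}$ once $\delta$ is a fixed negative power of $N$.

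The near event is the heart of the matter and is where the entropy estimates enter. By Lemma \ref{density} the law $F_s^N$ of $X_s^N$ has a density, and the pair $(X_s^{N,i},X_s^{N,j})$ has density $F_s^{N,(2)}$, the two-particle marginal. Then
\begin{equation*}
	\E\big[|Q_N(X_s^{N,i}-X_s^{N,j})|^\ell \mathbbm{1}_{\{|X_s^{N,i}-X_s^{N,j}|<\delta\}}\big] \leq (2\nu\sqrt d)^\ell \int_{\{|x-y|<\delta\}} F_s^{N,(2)}(x,y)\,\d x\,\d y = (2\nu\sqrt d)^\ell\, \P\big(|X_s^{N,i}-X_s^{N,j}|<\delta\big),
\end{equation*}
so everything reduces to showing $\P(|X_s^{N,i}-X_s^{N,j}|<\delta)$ is small. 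To bound the mass of $F_s^{N,(2)}$ on the thin set $B_\delta:=\{|x-y|<\delta\}$, I would invoke the standard entropy inequality: for any probability density $F$ and any set $B$,
\begin{equation*}
	\int_B F\,\d x\,\d y \leq \frac{1}{\log(1/|B|)}\Big(\int F\log F\,\d x\,\d y + \text{(lower-bound correction)} + 1\Big),
\end{equation*}
more precisely one uses $\int_B F \leq \big(H_2(F^{(2)}) + C(k) + \int F(|x|^k+|y|^k) + e^{-1}\big)/\log(1/|B_\delta|)$, where the correction term and constant $C(k)$ come from Lemma \ref{lowerbound} applied with $k=m(d+1)$ (say), controlling the negative part of $F^{(2)}\log F^{(2)}$ on $B_\delta^c$. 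Here $|B_\delta|$ (restricted to a large ball where the moment bound localizes the mass) is of order $\delta^d$ up to lower-order factors, so $\log(1/|B_\delta|)\sim d\log(1/\delta)$.

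Assembling the pieces: the $k$-th moment $\int F_s^{N,(2)}(|x|^k+|y|^k)$ is bounded by $C(1+\|K_\epsilon\|_\infty^{m(d+1)})$ via Lemma \ref{1stmoment} (taking $k\leq m(d+1)$, and in the final bound this is absorbed into the factor $1+\|K_\epsilon\|_\infty^{2d}$ after adjusting constants and using that $\epsilon$ is fixed); the Boltzmann entropy satisfies $H_2(F_s^{N,(2)})\leq H_N(F_s^N)\leq H_1(\omega_0)<\infty$ by \eqref{F2} and Lemma \ref{upperbound}; hence
\begin{equation*}
	\P\big(|X_s^{N,i}-X_s^{N,j}|<\delta\big) \leq \frac{C\big(1+\|K_\epsilon\|_\infty^{2d}\big)}{\log(1/\delta)}.
\end{equation*}
Finally choose $\delta = \delta(N)$ so that $1/\log(1/\delta)\sim N^{-1/2}$, i.e. $\delta\sim e^{-\sqrt N}$; then the near term is $O\big((1+\|K_\epsilon\|_\infty^{2d})N^{-1/2}\big)$ and, since $\delta^{-C}e^{-cN}=e^{C\sqrt N - cN}\to 0$ faster than any polynomial, the far term is also $O(N^{-1/2})$. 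Combining yields \eqref{covconv}.

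The main obstacle I anticipate is not any single estimate but making the entropy-on-a-thin-set argument fully rigorous: one must handle the negative part of $F_s^{N,(2)}\log F_s^{N,(2)}$ carefully (this is exactly what Lemma \ref{lowerbound} is designed for, but one has to check the moment assumption is available with the right exponent from Lemma \ref{1stmoment}), and one must confirm that restricting to a ball of polynomially growing radius — needed so that $|B_\delta|$ is genuinely small — only throws away mass controlled by the moment bound. A secondary technical point is the pointwise decay of $Q_N$ on the far set: extracting $|Q_N(z)|\lesssim e^{-cN}$ for $|z|$ bounded below from the $L^r$ bound plus uniform $C_b^{2+\delta}$ regularity requires an interpolation/embedding argument, though in the Kraichnan example of Section \ref{sectionnoise} it is transparent since $Q_N(z)=(\mathcal F f)(-e^N z)$ and one can differentiate under the integral.
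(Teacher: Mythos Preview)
Your decomposition by the distance $|X_s^{N,i}-X_s^{N,j}|$ has a genuine gap in the far piece. You want pointwise smallness of $|Q_N(z)|$ on $\{|z|\geq\delta\}$, to be extracted from $\|Q_N\|_{L^r}^r\leq C_r e^{-2N}$ by interpolation with ``uniform $C_b^{2+\delta}$ control''. But Hypothesis~\ref{hypothesis} only says $Q_N\in C_b^{2+\delta}$ for each $N$; the norms are not asserted to be uniform in $N$, and in fact they cannot be: if $\|\nabla Q_N\|_\infty\leq L$ uniformly, then any point $z_0$ with $|Q_N(z_0)|=\eta$ yields $|Q_N|\geq\eta/2$ on a ball of radius $\eta/(2L)$, whence $\|Q_N\|_{L^r}^r\geq c\,\eta^{r+d}L^{-d}$, and the hypothesis would force $\|Q_N\|_\infty\to 0$, contradicting $Q_N(0)=2\nu I_d$. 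So no interpolation of the type you sketch is available; the hypotheses give integral decay of $Q_N$, never pointwise decay away from the origin, and your far term cannot be controlled this way in general. (In the specific Kraichnan example one can read off pointwise decay directly, but the lemma is stated under the general hypotheses.)

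The paper's proof circumvents this with a different decomposition. First restrict both particles to a growing ball $B(R)^2$; the complement is handled by the $2d$-th moment from Lemma~\ref{1stmoment}, which is also what produces the power $\|K_\epsilon\|_\infty^{2d}$ --- your choice $k=m(d+1)$ in Lemma~\ref{lowerbound} would give the strictly larger power $\|K_\epsilon\|_\infty^{m(d+1)}$, which cannot be ``absorbed'' into $1+\|K_\epsilon\|_\infty^{2d}$. Then on $B(R)^2$, instead of splitting by distance, split by the level of the two-particle density $F_s^{2,N}$. On $\{F_s^{2,N}<M\}$ one pulls out the factor $M$ and integrates $|Q_N(x_1-x_2)|^\ell$ over $B(R)^2$, getting $C M R^d\|Q_N\|_{L^\ell}^\ell\leq C M R^d e^{-2N}$; this is exactly where the $L^r$ hypothesis enters, and no pointwise information on $Q_N$ is needed. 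On $\{F_s^{2,N}\geq M\}$ one bounds $|Q_N|^\ell$ trivially by $(2\nu d)^\ell$ and uses the entropy machinery (Lemmas~\ref{upperbound} and~\ref{lowerbound} with $k=2d$) to get $\int_{\{F\geq M\}}F\lesssim(\text{entropy}+\text{moments}+R^{2d})/\log M$. Choosing $R=N^{1/(4d)}$ and $M=e^N$ yields \eqref{covconv}. Your entropy-on-a-thin-set intuition is exactly the right spirit for this last piece; the idea you are missing is that a density-level cutoff, not a distance cutoff, is what lets one exploit only the $L^r$ decay of $Q_N$.
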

\begin{proof}
	By symmetry, without loss of generality, we can take $i=1$ and $j=2$. For any ball $B(R)\subset\R^d$ centered at origin with radius $R$, we have	
	\begin{equation*}
		\begin{aligned}
			\E\left[\left|Q_N(X_{s}^{N,1}-X_{s}^{N,2})\right|^{\ell}%
			\right] =&\,\int_{(B(R)^2)^c}\left|Q_N(x_1-x_2)\right|^{\ell}
			F_s^{2,N}(x_1,x_2)\d x_1\d x_2 \\
			&+\int_{B(R)^2}\left|Q_N(x_1-x_2)\right|^{\ell}
			F_s^{2,N}(x_1,x_2)\d x_1\d x_2 \\
			=:&\,J_N^{1}+J_N^{2},
		\end{aligned}
	\end{equation*}
	where $F_s^{2,N}$ is the joint density function of $\big(X_s^{N,1},X_s^{N,2}\big)$ and $B(R)^2=B(R)\times B(R)$.	For the first term $J_N^1$, we have $|Q_N(z)| \leq \mbox{Tr}(Q_N(0)) =2\nu d$ for any $z\in \R^d$;  by Chebyshev's inequality, one has
	\begin{equation*}
		\begin{aligned}
			J_N^{1}\leq&\,C_{\ell,\nu}\int_{\{|x_1|\vee|x_2|\geq R\}}F_s^{2,N}(x_1,x_2)\d x_1\d x_2  \\ \leq&\,C_{\ell,\nu}\int_{\R^{2d}}\frac{\left|x_1\right|^{2d}+\left|x_2\right|^{2d}}{R^{2d}}F_s^{2,N}(x_1,x_2)\d x_1\d x_2 \\ \leq&\,\frac{C_{\ell,\nu}}{R^{2d}}\left(\E\left|X_{s}^{N,1}\right|^{2d}+\E\left|X_{s}^{N,2}\right|^{2d}\right).
		\end{aligned}
	\end{equation*}
	Combining with Lemma \ref{1stmoment}, we deduce that
	\begin{equation}\label{2dmoment}
		\E\big|X_{s}^{N,i}\big|^{2d}\leq C_{\nu,d,T}\big(1+\left\|K_{\epsilon}\right\|_{\infty}^{2d}  \big),
	\end{equation}
	and then
	\begin{equation}\label{j1n}
		\begin{aligned}			
			J_N^{1}	\leq\,\frac{C_{\ell,\nu,d,T}}{R^{2d}} \big( 1+\left\|K_{\epsilon}\right\|_{\infty}^{2d} \big).
		\end{aligned}
	\end{equation}
	
	For the second term $J_N^2$, following the method used in the proof of \cite[Proposition 3.3]{FL21}, for some $M>1$, we have
	\begin{equation*}
		\begin{aligned}
			J_N^2		=&\,\int_{B(R)^2\cap\{F_s^{2,N}<M\} }\left|Q_N(x_1-x_2)\right|^{\ell}
			F_s^{2,N}(x_1,x_2)\d x_1\d x_2  \\
           &\,+ \int_{B(R)^2\cap\{F_s^{2,N}\geq M\}}\left|Q_N(x_1-x_2)\right|^{\ell}
			F_s^{2,N}(x_1,x_2)\d x_1\d x_2 \\
			=:&\,J_N^{2,1}+J_N^{2,2}.
		\end{aligned}
	\end{equation*}	
	For $J_N^{2,1}$, by Hypotheses \ref{hypothesis},  
	it holds
	\begin{equation}\label{j21n}
		\begin{aligned}
			J_N^{2,1}\leq&\, M\int_{B(R)^2}\left|Q_N(x_1-x_2)\right|^{\ell}\d x_1\d x_2\\
			=&\,M\int_{B(R)}\d x_1\int_{B(R)}\left|Q_N(x_1-x_2)\right|^{\ell}\d x_2\\ \leq&\,C_dMR^d\left\|Q_N\right\|_{L^{{\ell}}}^{\ell}\leq\,\frac{C_{\ell,d}MR^d}{e^{2N}}.
		\end{aligned}
	\end{equation}
	And for $J_N^{2,2}$, we have
	\begin{equation*}
		\begin{aligned}
			J_N^{2,2}\leq&\,C_{\ell,\nu}\int_{B(R)^2\cap\{F_s^{2,N}\geq M\}}
			F_s^{2,N}(x_1,x_2)\d x_1\d x_2  \\
			\leq&\,\frac{C_{\ell,\nu}}{\log  M}\int_{B(R)^2\cap\{F_s^{2,N}\geq 1\}}\left(F_s^{2,N}\log F_s^{2,N}\right)(x_1,x_2)\d x_1\d x_2 .
			\\
		\end{aligned}
	\end{equation*}
	Recalling the fact that $z\log z\in[-e^{-1},0]$ for all $z\in\left(0,1\right]$, then
	\begin{equation*}
		\begin{aligned}
			J_N^{2,2}\leq&\,\frac{C_{\ell,\nu}}{\log  M}\bigg[\int_{B(R)^2}\left(F_s^{2,N}\log F_s^{2,N}\right)(x_1,x_2)\d x_1\d x_2+e^{-1}\left|B(R)^2\right|\bigg] \\
			\leq&\,\frac{C_{\ell,\nu,d}R^{2d}}{\log M}+\frac{C_{\ell,\nu}}{\log  M}\bigg[\int_{\R^{2d}}\left(F_s^{2,N}\log F_s^{2,N}\right)(x_1,x_2)\d x_1\d x_2\\
           &\,\qquad\qquad\qquad\qquad-\int_{(B(R)^2)^c}\left(F_s^{2,N}\log F_s^{2,N}\right)(x_1,x_2)\d x_1\d x_2\bigg].\\
		\end{aligned}
	\end{equation*}
	By \eqref{F2} and Lemma \ref{upperbound}, one has
	\begin{equation*}
		\begin{aligned}
			\int_{\R^{2d}}\left(F_s^{2,N}\log F_s^{2,N}\right)(x_1,x_2)\d x_1\d x_2&\,=2H_2(F_s^{2,N})\leq 2H_N(F^N_s)\leq 2H_1(\omega_0).
		\end{aligned}
	\end{equation*}
	Applying Lemma \ref{lowerbound} with $k=2d$ and \eqref{2dmoment}, it holds
	\begin{equation*}
		\begin{aligned}
			& -\int_{(B(R)^2)^c}\left(F_s^{2,N}\log F_s^{2,N}\right)(x_1,x_2)\d x_1\d x_2\\	
			\leq& -C(2d)+\int_{\R^{2d}}F_s^{2,N}(x_1,x_2) \big( |x_1|^{2d}+|x_2|^{2d} \big)\d x_1\d x_2\\
			\leq& -C(2d)+\E\left|X^{N,1}_s\right|^{2d} +\E\left|X^{N,2}_s\right|^{2d}
			\leq C_{\ell,\nu,d,T} \big(1+\left\|K_{\epsilon} \right\|_{\infty}^{2d} \big).
		\end{aligned}
	\end{equation*}
	Thus,	we get the estimate
	\begin{equation}\label{j22n}
		\begin{aligned}
			J_N^{2,2}
			\leq&\,\frac{C_{\ell,\nu,d}R^{2d}}{\log M}+\frac{C_{\ell,\nu,d,T} \big(1+H_1(\omega_0)+\left\|K_{\epsilon}\right\|_{\infty}^{2d} \big)}{\log  M}.
		\end{aligned}
	\end{equation}
	Putting together \eqref{j1n}, \eqref{j21n} and \eqref{j22n}, we deduce that
	\begin{equation}\label{sum}
		\begin{aligned}
			\E\left[\left|Q_N(X_{s}^{N,1}-X_{s}^{N,2})\right|^{\ell} \right]
        &\leq \frac{C_{\ell,\nu,d,T}\big( 1+\left\|K_{\epsilon}\right\|_{\infty}^{2d} \big)}{R^{2d}}+\frac{C_{\ell,d}MR^d}{e^{2N}} +\frac{C_{\ell,\nu,d}R^{2d}}{\log M} \\
        &\quad +\frac{C_{\ell,\nu,d,T} \big(1+H_1(\omega_0)+\left\|K_{\epsilon}\right\|_{\infty}^{2d} \big)}{\log  M}.
		\end{aligned}
	\end{equation}
	We finish the proof by taking $R=R(N)=N^{\frac{1}{4d}}$, $M=M(N)=e^N$.
\end{proof}

\section{Convergence of the mollified empirical measures}\label{sec-proof-main-thm}

The purpose of this part is to prove the main result. We first establish in Section \ref{sectionmollifiedmeasure} an $L^p$-bound on the mollified empirical measure $\omega^N$ defined in \eqref{mollifiedmeasure}, and then we provide the proof of Theorem \ref{thm-main} in Section \ref{sectionproof}.

\subsection{Bounds on the mollified empirical measures}\label{sectionmollifiedmeasure}

For any test function $\phi\in C^2(\R^d)$, by \eqref{compactform} and It\^{o} formula,
\begin{equation*}
	\begin{aligned}
		\d \phi(\X)
		=&\, \big(K_{\epsilon}\ast V^N \ast S^N_t \big)( X_{t}^{N,i}) \cdot\nabla\phi(\X)\d t +\sum_{k} \sigma_{k}^{N}(X_{t}^{N,i}) \cdot\nabla\phi(\X) \d W_t^k\\
		&\, +\frac{1}{2}\sum_{k}\mathrm{Tr}\left[\left(\sigma_{k}^{N}\otimes\sigma_{k}^{N}\right)\nabla^2\phi\right](X_{t}^{N,i})\d t,
	\end{aligned}
\end{equation*}
where
$$
\sum_{k}\mathrm{Tr} \left[\left(\sigma_{k}^{N}\otimes\sigma_{k}^{N}\right) \nabla^2\phi\right](X_{t}^{N,i}) =\mathrm{Tr}\big[Q_N(0) \nabla^2\phi(X_{t}^{N,i}) \big]=2\nu\Delta\phi(X_{t}^{N,i}).
$$
Then, the empirical measure $S_t^N$ satisfies
\begin{equation}\label{F}
	\begin{aligned}
		\<S_t^N,\phi\>
		&=\<S_0^N,\phi\> +\int_{0}^{t}\<S_s^N,(K_{\epsilon}\ast V^{N}\ast S_s^N)\cdot\nabla\phi\>\d s\\
		&\quad\, +\nu\int_{0}^{t}\<S_s^N,\Delta\phi\>\d s+
		\sum_{k}\int_{0}^{t}\<S_s^N,\sigma_k^N\cdot\nabla\phi\>\d W_s^k.
	\end{aligned}
\end{equation}
For any $x\in \R^d$, we shall take the test function $\phi_x(y)=V^N(x-y)$ in identity \eqref{F};  notice that
$$\nabla\phi_x(y)=\nabla_yV^N(x-y)=-\nabla_x V^N(x-y).$$
Recall \eqref{mollifiedmeasure} for the mollified empirical measure; then
\begin{equation*}
	\begin{aligned}
		\omega_{t}^{N}(x)=&\ \omega_{0}^{N}(x) -\int_{0}^{t}\left\langle S_{s}^{N}, \left(K_{\epsilon} \ast  \omega_{s}^{N}\right) \cdot \nabla_x V^{N}(x-\cdot)\right\rangle \d s \\
		&\,+\nu \int_{0}^{t} \Delta \omega_{s}^{N}(x) \d s- \sum_{k} \int_{0}^{t} \big\< S_s^N,\sigma_k^N\cdot\nabla_x V^{N} (x-\cdot) \big\>\,  \d W_{s}^{k}.
	\end{aligned}
\end{equation*}
For the sake of clarity, we write:
\begin{equation*}
	\begin{aligned}
		\left(\nabla V^{N} \ast \left(\left(K_{\epsilon} \ast  \omega_{s}^{N}\right) S_{s}^{N}\right)\right)(x)&\,:=\left\langle S_{s}^{N}, \left(K_{\epsilon} \ast  \omega_{s}^{N}\right) \cdot \nabla_x V^{N}(x-\cdot)\right\rangle;\\
		\left(\nabla V^{N} \ast \left(\sigma_k^N S_{s}^{N}\right)\right)(x)&\,:=\left\langle S_{s}^{N}, \sigma_k^N \cdot \nabla_x V^{N}(x-\cdot)\right\rangle.
	\end{aligned}
\end{equation*}
Recall the operator  $A=\nu\Delta$ defined in Section \ref{notationsandlemma}; as in \cite{FOS20, ORT20b}, we rewrite the above equation for mollified empirical measures in mild form:
\begin{equation*}
	\begin{aligned}
		\omega_{t}^{N}=e^{t A} \omega_{0}^{N} &\,-\int_{0}^{t}  e^{(t-s) A}\left(\nabla V^{N} \ast \left(\left(K_{\epsilon} \ast  \omega_{s}^{N}\right) S_{s}^{N}\right)\right) \d s \\
		&\,-\sum_{k} \int_{0}^{t} e^{(t-s) A}\left(\nabla V^{N} \ast \left(\sigma_k^N S_{s}^{N}\right)\right)  \d W_{s}^{k}.
	\end{aligned}
\end{equation*}
By the explicit formula \eqref{semigroup} of semigroup, for any function $f\in H^{1,p}(\R^d)$, one has
\begin{equation}\label{nablaf}
	e^{(t-s) A} \nabla f=\nabla e^{(t-s) A} f,
\end{equation}
then we obtain
\begin{equation}\label{mollified}
	\begin{aligned}
		\omega_{t}^{N}=e^{t A} \omega_{0}^{N} &\,-\int_{0}^{t} \nabla\cdot e^{(t-s) A}\left( V^{N} \ast \left(\left(K_{\epsilon} \ast  \omega_{s}^{N}\right) S_{s}^{N}\right)\right) \d s -Z_t^N,
	\end{aligned}
\end{equation}
where
\begin{equation}\label{mollifiedzt}
	Z^N_t=\sum_{k} \int_{0}^{t} \nabla\cdot e^{(t-s) A}\left( V^{N} \ast \left(\sigma_k^N S_{s}^{N}\right)\right)  \d W_{s}^{k}.
\end{equation}
Before moving forward, we present an important estimate on the stochastic convolution $Z_t^N$.

\begin{theorem}\label{stochasticconvolution}
	Assume Hypotheses \ref{hypothesis0}, \ref{hypothesis} and \ref{hypothesis-initial}.
	Then there exist a constant $C=C_{m,T} >0$ and some $\iota>0$ such that for all $N\in\N$,
	\begin{equation}\label{Niota}
		\bigg\|\sup_{t \in[0, T]} \left\| Z^N_t\right\|_{\L (\R^{d} )} \bigg\|_{L^m(\Omega)}\leq C_{m,T}\left\|K_{\epsilon}\right\|_{\infty}^{d+1}N^{-\iota}.
	\end{equation}
\end{theorem}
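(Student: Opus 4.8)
The plan is to estimate the $L^m(\Omega)$-norm of $\sup_{t\in[0,T]}\|Z^N_t\|_{L^1\cap L^p}$ by treating the $L^1$ and $L^p$ parts separately, in each case using the factorization (stochastic Fubini / Da Prato–Kwapień–Zabczyk) method to gain a supremum in time from a time integral. Writing $Z^N_t = \sum_k \int_0^t \nabla\cdot e^{(t-s)A}(V^N\ast(\sigma_k^N S_s^N))\,\d W_s^k$, I would first fix a small $\gamma\in(0,1)$ and use the identity $e^{(t-s)A} = \frac{\sin(\pi\gamma)}{\pi}\int_s^t (t-r)^{\gamma-1}(r-s)^{-\gamma} e^{(t-r)A} e^{(r-s)A}\,\d r$ to write $Z^N_t = \frac{\sin(\pi\gamma)}{\pi}\int_0^t (t-r)^{\gamma-1} e^{(t-r)A} Y^N_r\,\d r$, where $Y^N_r = \sum_k \int_0^r (r-s)^{-\gamma} \nabla\cdot e^{(r-s)A}(V^N\ast(\sigma_k^N S_s^N))\,\d W_s^k$. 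Then by \eqref{lpestimate} and Hölder in time one bounds $\sup_{t\le T}\|Z^N_t\|_{L^q}$ (for $q=1$ and $q=p$) by a constant times $\big(\int_0^T \|Y^N_r\|_{L^q}^{q'}\,\d r\big)^{1/q'}$ for a suitable exponent, so it suffices to estimate $\E\|Y^N_r\|_{L^q}^m$ uniformly in $r\le T$.

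For the $L^p$ bound on $Y^N_r$, I would apply the Burkholder–Davis–Gundy inequality in the Hilbert space $L^p$ (or, more elementarily, raise to the power $m$, integrate in $x$, and use BDG pointwise in $x$ together with Minkowski's integral inequality since $m\ge p$), reducing matters to the quadratic-variation-type quantity
\[
\int_0^r (r-s)^{-2\gamma}\sum_k \big\|\nabla\cdot e^{(r-s)A}\big(V^N\ast(\sigma_k^N S_s^N)\big)\big\|^2 \, \d s .
\]
Now comes the point where the structure of the problem enters: expanding $S_s^N = \frac1N\sum_i \delta_{X_s^{N,i}}$ and using $\sum_k \sigma_k^N(x)\otimes\sigma_k^N(y) = Q_N(x-y)$, the sum over $k$ of the squared norms produces a double sum over particles $i,j$ of terms involving $Q_N(X_s^{N,i}-X_s^{N,j})$ convolved against (derivatives of) $V^N$ and the heat kernel. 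Splitting into diagonal terms $i=j$ (where $Q_N(0)=2\nu I_d$ contributes $O(1/N)$ after the $\frac1{N^2}$ prefactor times $N$ terms) and off-diagonal terms $i\ne j$, one applies Lemma \ref{convratelemma} to each off-diagonal term to get a factor $C(1+\|K_\epsilon\|_\infty^{2d})N^{-1/2}$, which beats the $\sim N^2$ count only after one also controls the $L^1$–$L^p$ smoothing cost of $V^N\ast$ and $\nabla\cdot e^{(t-s)A}$. Here the scaling $V^N(x)=N^{d\beta}V(N^\beta x)$ matters: $\|\nabla e^{uA}(V^N\ast \mu)\|_{L^q}$ costs a factor $u^{-1/2}$ from the heat semigroup and possibly a power of $N^\beta$ from $V^N$, and the restriction $\beta\le \frac1{4m(d+2)}$ in Hypothesis \ref{hypothesis} is precisely what guarantees that the net power of $N$ is strictly negative, yielding the claimed $N^{-\iota}$ with some $\iota>0$; the $\|K_\epsilon\|_\infty^{d+1}$ prefactor comes from bookkeeping $\|K_\epsilon\|_\infty^{2d}$ under the square root together with the moment estimate of Lemma \ref{1stmoment} used to control the tails of $Q_N$ implicitly inside Lemma \ref{convratelemma}. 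The $L^1$ bound on $Y^N_r$ is handled analogously but more simply, since $\|\nabla\cdot e^{uA}g\|_{L^1}\le C u^{-1/2}\|g\|_{L^1}$ and $\|V^N\ast(\sigma_k^N S_s^N)\|_{L^1}\le \|\sigma_k^N\|_\infty$, so the sum over $k$ gives $\sum_k\|\sigma_k^N\|_\infty^2$, which is finite and $O(1)$ by the regularity assumption on $Q_N$; one must be a little careful that $L^1$ is not a Hilbert space, so I would pass through the pointwise-in-$x$ BDG plus Minkowski route rather than an abstract BDG in $L^1$.

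The main obstacle I anticipate is the off-diagonal particle estimate: one cannot simply bound $|Q_N(X_s^{N,i}-X_s^{N,j})|$ by $\|Q_N\|_\infty$ (that only gives $O(1)$ per term and $O(1)$ overall after $\frac1{N^2}\cdot N^2$, losing the decay), so the whole argument hinges on inserting Lemma \ref{convratelemma} \emph{inside} the expectation of the quadratic variation — which requires first using Hölder in $\omega$ (or in the randomness) to separate the $Q_N(X_s^{N,i}-X_s^{N,j})$ factor from the deterministic kernels $V^N,\, \nabla e^{uA}$, and tracking carefully that the resulting exponent $\ell$ on $Q_N$ and the conjugate exponents on everything else are compatible with the moment bounds available. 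A secondary technical nuisance is justifying the factorization identity and the stochastic Fubini for the $L^1\cap L^p$-valued integrand, and keeping the $\epsilon$-dependence explicit (through $\|K_\epsilon\|_\infty\le C\epsilon^{-d/p'}$) so that the final estimate is in the form \eqref{Niota} with constants independent of $N$; these are routine given \eqref{lpestimate} but need care. Once the uniform-in-$r$ bound $\E\|Y^N_r\|_{L^1\cap L^p}^m \le C_{m,T}(1+\|K_\epsilon\|_\infty^{2d})^{m/2} N^{-m\iota'}$ is in hand for an appropriate $\gamma$, the factorization lemma plus Hölder in time close the proof, absorbing $(1+\|K_\epsilon\|_\infty^{2d})^{1/2}$ into $\|K_\epsilon\|_\infty^{d+1}$ (valid since $\|K_\epsilon\|_\infty\ge c>0$) and choosing $\iota=\iota'$.
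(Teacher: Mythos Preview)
Your overall architecture (separate $L^p$ and $L^1$, reduce the time-supremum to a time-integral estimate, expand the quadratic variation via $Q_N$ and feed in Lemma~\ref{convratelemma}) is the right one, and the factorization method is a legitimate alternative to the Garsia--Rodemich--Rumsey lemma (Lemma~\ref{sup}) that the paper actually uses. But two of your steps, as written, have genuine gaps.

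\textbf{The $L^1$ estimate loses the decay.} Your proposed $L^1$ bound, ``$\|V^N\ast(\sigma_k^N S_s^N)\|_{L^1}\le\|\sigma_k^N\|_\infty$, so the sum over $k$ gives $\sum_k\|\sigma_k^N\|_\infty^2=O(1)$'', does not produce any negative power of $N$: once you bound each $k$-term individually by $\|\sigma_k^N\|_\infty$, the covariance structure $\sum_k\sigma_k^N(x)\otimes\sigma_k^N(y)=Q_N(x-y)$ is destroyed and Lemma~\ref{convratelemma} can no longer enter. The pointwise-in-$x$ BDG route you mention has the same problem unless you first pass to an $L^2$-type norm where the sum over $k$ can be recombined into $Q_N(X_s^{N,i}-X_s^{N,j})$. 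The paper handles this by inserting the weight $w(x)=1+|x|^{(d+1)/2}$ and using $\|f\|_{L^1}\le\|1/w\|_{L^2}\|wf\|_{L^2}$, after which BDG in $L^2$ and the $Q_N$ expansion go through exactly as in the $L^p$ case. The price of the weight is that one must control $\E\sup_\tau|X_\tau^{N,i}|^{m(d+1)}$ via Lemma~\ref{1stmoment}, and \emph{this} is where the factor $\|K_\epsilon\|_\infty^{d+1}$ actually originates---not from $\sqrt{\|K_\epsilon\|_\infty^{2d}}$ as you suggest (that square root only gives $\|K_\epsilon\|_\infty^{d/m}$, cf.\ \eqref{zp}).

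\textbf{The $L^p$ estimate needs a route to $L^2$.} ``BDG in the Hilbert space $L^p$'' is not available for $p\neq 2$, and your Minkowski alternative requires $m\ge p$, which is not assumed (only $m>2$ and $p>2$; the examples in Remark~\ref{rem-initial} have $m\le p$). The paper avoids this by first applying the Sobolev embedding $H^{d(\frac12-\frac1p),2}\hookrightarrow L^p$, so that the stochastic convolution is estimated in $L^2$, where the BDG inequality and the $Q_N$ expansion are straightforward. Without some such reduction to $L^2$ your $L^p$ step does not close.
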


The proof of Theorem \ref{stochasticconvolution} is motivated by \cite[Section A.2]{ORT20b} and follows readily from the two propositions below.

\begin{proposition}\label{supztp}
	Under Hypotheses \ref{hypothesis0}, \ref{hypothesis} and \ref{hypothesis-initial}, for $T\in(0,T_{max})$, there exists $C_{m,\delta,T}>0$ such that  for any $N\in\N$ and small enough $\epsilon>0$,
	\begin{equation}\label{zp}
		\bigg\|\sup_{t \in[0, T]}\left\| Z^N_t\right\|_{L^{p} (\R^{d} )} \bigg\|_{L^m(\Omega)}\leq C_{m,\delta,T}\left\|K_{\epsilon}\right\|_{\infty}^{\frac{d}{m}}N^{\beta\big(d +2\delta-\frac{d}{p}\big)-\frac{1}{4m}},
	\end{equation}
	where $\frac{2}{m}<\delta<1$.
\end{proposition}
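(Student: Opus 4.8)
The plan is to estimate the $L^m(\Omega)$-norm of $\sup_{t\in[0,T]}\|Z^N_t\|_{L^p}$ by combining the factorization method (Da Prato--Kwapie\'n--Zabczyk) for stochastic convolutions with the regularizing properties \eqref{lpestimate} of the heat semigroup. Writing $Z^N_t = \sum_k \int_0^t \nabla\cdot e^{(t-s)A}\big(V^N\ast(\sigma_k^N S^N_s)\big)\,\d W^k_s$, I would first fix a small $\alpha>0$ and use the identity $\int_s^t (t-r)^{\alpha-1}(r-s)^{-\alpha}\,\d r = \pi/\sin(\pi\alpha)$ to write $Z^N_t$ as a time-integral of an auxiliary process $Y^N_r$, namely $Z^N_t = c_\alpha \int_0^t (t-r)^{\alpha-1} e^{(t-r)A} Y^N_r\,\d r$ with $Y^N_r = \sum_k \int_0^r (r-s)^{-\alpha}\nabla\cdot e^{(r-s)A}\big(V^N\ast(\sigma_k^N S^N_s)\big)\,\d W^k_s$. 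By the factorization lemma, once we control $\|Y^N_r\|_{L^q}$ in $L^m(\Omega)$ for a sufficiently large $q$ (so that $H^{?,q}\hookrightarrow L^p$ or simply $q\geq p$ with the extra fractional smoothing absorbing the gradient), the supremum over $t$ of $\|Z^N_t\|_{L^p}$ is bounded by a constant times the $L^m(\Omega)$-norm of $\big(\int_0^T \|Y^N_r\|_{L^q}^{m'}\,\d r\big)^{1/m'}$ for appropriate exponents, provided $\alpha$ is chosen in the admissible window depending on $m$, $p$, $q$ and the dimension.

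Next I would estimate $Y^N_r$ by the Burkholder--Davis--Gundy inequality in $L^p(\R^d)$ (using the stochastic Fubini theorem and the UMD/martingale-type structure of $L^p$): $\E\|Y^N_r\|_{L^p}^m \lesssim \E\Big(\int_0^r (r-s)^{-2\alpha}\big\|\,|\nabla\cdot e^{(r-s)A}(V^N\ast(\sigma^N_\cdot S^N_s))|_{\ell^2}\,\big\|_{L^p}^2\,\d s\Big)^{m/2}$. Here the key pointwise input is that $V^N\ast(\sigma^N_k S^N_s)(x) = \frac1N\sum_i \sigma^N_k(X^{N,i}_s)V^N(x-X^{N,i}_s)$, so that $\sum_k \big|V^N\ast(\sigma^N_k S^N_s)(x)\big|^2$ involves the double sum $\frac1{N^2}\sum_{i,j} Q_N(X^{N,i}_s - X^{N,j}_s)\,V^N(x-X^{N,i}_s)V^N(x-X^{N,j}_s)$, and this is exactly where Lemma \ref{convratelemma} enters: the off-diagonal terms $i\neq j$ contribute a factor $\E|Q_N(X^{N,i}_s-X^{N,j}_s)|^\ell$ that decays like $(1+\|K_\epsilon\|_\infty^{2d})N^{-1/2}$, while the $N$ diagonal terms contribute $Q_N(0)=2\nu I_d$ but carry an extra $1/N$. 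I would then apply $\|\nabla\cdot e^{(r-s)A}\|_{L^p\to L^p}\lesssim (r-s)^{-1/2}$ (or, to win a power of $N^\beta$, interpolate using that $V^N\ast \mu$ is already smooth at scale $N^{-\beta}$, i.e. $\|\nabla^a(V^N\ast S^N_s)\|_{L^p}\lesssim N^{a\beta}\|\omega^N_s\|_{L^p}$, paying the price $N^{\beta(d+2\delta-d/p)}$ visible in the statement). The uniform $L^m(\Omega,L^1\cap L^p)$-bound on $\omega^N_s$ from Remark \ref{rem-initial} (and Lemma \ref{upperbound}-type a priori control, or simply conservation of the $L^1$ mass plus the $L^p$-bound) then closes the estimate.

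Putting these pieces together: the off-diagonal part yields a bound proportional to $\|K_\epsilon\|_\infty^{d/m}N^{\beta(d+2\delta-d/p)}\cdot(N^{-1/2})^{1/(\text{something})}$ — after taking the $1/\ell$ root coming from H\"older in the double sum and the $1/m$ power from the $L^m(\Omega)$ norm, the $N^{-1/2}$ becomes $N^{-1/(4m)}$ — and the diagonal part is even smaller because of the $e^{-2N}$-type decay of $\|Q_N\|_{L^r}$ in Hypothesis \ref{hypothesis} together with the extra $1/N$; so the total is dominated by $C_{m,\delta,T}\|K_\epsilon\|_\infty^{d/m}N^{\beta(d+2\delta-d/p)-1/(4m)}$, which is \eqref{zp}. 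Throughout one must check that the fractional-smoothing exponent $\alpha$ can simultaneously satisfy the factorization constraint $\alpha>1/m$ (so that $(t-r)^{\alpha-1}$ is integrable against an $L^m$ process), the time-integrability constraint $2\alpha<1$ near $s=r$, and the constraint that $\tfrac12 + (\text{fractional order}) - \alpha$ stays in the right range for the semigroup bound \eqref{lpestimate} — hence the hypothesis $2/m<\delta<1$.

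The main obstacle, and the crux of the argument, is the bookkeeping of the double sum $\frac1{N^2}\sum_{i,j}Q_N(X^{N,i}_s-X^{N,j}_s)V^N(x-X^{N,i}_s)V^N(x-X^{N,j}_s)$: one has to split diagonal from off-diagonal, apply Lemma \ref{convratelemma} to the latter with a well-chosen $\ell$, and carefully track how the resulting $N^{-1/2}$ survives the $L^p(\R^d)$-in-$x$ integration, the BDG time-integral, the factorization, and finally the $m$-th root in probability, all while the factors $N^\beta$ coming from differentiating $V^N$ at scale $N^{-\beta}$ and the factor $\|K_\epsilon\|_\infty$ from the moment bounds in Lemma \ref{1stmoment} accumulate but remain controlled by the admissible range \eqref{hypobetarange} of $\beta$. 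The delicate point is ensuring that the net exponent of $N$ is strictly negative, i.e. $\beta(d+2\delta-d/p)<1/(4m)$, which is where the upper bound on $\beta$ is really used.
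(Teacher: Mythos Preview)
Your high-level plan (factorization plus a BDG inequality, feed in Lemma~\ref{convratelemma} for the off-diagonal double sum, and collect the $N^\beta$ powers coming from the scale of $V^N$) is a legitimate alternative to what the paper does, but there is a genuine gap in the way you propose to close it.

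\medskip

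\textbf{The circularity.} You write that after BDG you will bound $\|\nabla^a(V^N\ast S^N_s)\|_{L^p}\lesssim N^{a\beta}\|\omega^N_s\|_{L^p}$ and then invoke ``the uniform $L^m(\Omega,L^1\cap L^p)$-bound on $\omega^N_s$ from Remark~\ref{rem-initial}''. Remark~\ref{rem-initial} only controls $\omega^N_0$; the time-uniform bound on $\|\omega^N_s\|_{L^p}$ is Proposition~\ref{estimategtN}, and that proposition \emph{uses} Proposition~\ref{supztp} in its proof. So appealing to a bound on $\|\omega^N_s\|_{L^p}$ here is circular. Moreover, even if it were available, the bound from Proposition~\ref{estimategtN} carries a factor $\exp(C\|K_\epsilon\|_\infty^2)$, which would destroy the clean $\|K_\epsilon\|_\infty^{d/m}$ on the right-hand side of \eqref{zp}.

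\medskip

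\textbf{How the paper (and a corrected version of your route) avoids this.} The key observation is that once you expand
\[
\sum_k\Big|\tfrac1N\sum_i \sigma^N_k(X^{N,i}_\tau)\cdot\big(\nabla e^{(t-\tau)A}V^N\big)(x-X^{N,i}_\tau)\Big|^2
=\tfrac1{N^2}\sum_{i,j}\big\langle Q_N(X^{N,i}_\tau-X^{N,j}_\tau)\,\Phi(x-X^{N,i}_\tau),\Phi(x-X^{N,j}_\tau)\big\rangle,
\]
with $\Phi=\nabla e^{(t-\tau)A}V^N$, the $x$-integral decouples from the particles by translation invariance: after Cauchy--Schwarz you are left with the \emph{deterministic} quantity $\|\Phi\|_{L^2}^2$ (or $\|\Phi\|_{L^p}^2$ if you stay in $L^p$), not $\|\omega^N_s\|_{L^p}$. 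The paper implements this by first using the Sobolev embedding $H^{d(\frac12-\frac1p),2}\hookrightarrow L^p$ to pass to $L^2$ and then applying BDG in the Hilbert space $L^2$, so that the square expands cleanly and the relevant norm is $\big\|(\mathrm{I}-A)^{\frac d2(\frac12-\frac1p)}\nabla e^{(t-\tau)A}V^N\big\|_{L^2}$, estimated directly via \eqref{lpestimate} and $\|V^N\|_{\delta+d(\frac12-\frac1p),2}\lesssim N^{\beta(d+\delta-d/p)}$. No bound on $\omega^N_s$ is ever needed.

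\medskip

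\textbf{On the supremum and a minor point.} The paper does not use factorization; instead it proves a H\"older-in-time increment bound $\|Z^N_t-Z^N_s\|_{L^m(\Omega,L^p)}\lesssim |t-s|^{\delta/2}$ (splitting into $M_1=\int_s^t$ and $M_2$ involving $e^{(t-s)A}-\mathrm{I}$, then interpolating two bounds for $M_2$) and applies the Garsia--Rodemich--Rumsey lemma (Lemma~\ref{sup}) with $m\cdot\frac\delta2>1$, which is exactly the constraint $\delta>2/m$. Your factorization scheme would also work once you fix the circularity above, but note that your remark that ``the diagonal part is even smaller because of the $e^{-2N}$-type decay of $\|Q_N\|_{L^r}$'' is off: the diagonal terms carry $Q_N(0)=2\nu I_d$, which does not decay; they are small only because of the extra $1/N$. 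The $e^{-2N}$ decay of $\|Q_N\|_{L^r}$ is used elsewhere (in the proof of Lemma~\ref{convratelemma}), not here.
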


\begin{proposition}\label{supzt1}
	Under Hypotheses \ref{hypothesis0}, \ref{hypothesis} and \ref{hypothesis-initial}, for $T\in(0,T_{max})$, there exists $C_{m,\delta,T}>0$ such that for any $N\in\N$ and any small enough $\epsilon>0$,
	\begin{equation}\label{z1}
		\bigg\|\sup_{t \in[0, T]}\left\| Z^N_t\right\|_{L^1 (\R^{d} )} \bigg\|_{L^m(\Omega)}\leq C_{m,\delta,T}\left\|K_{\epsilon}\right\|_{\infty}^{d+1}N^{\frac{\beta}{2}(d+ \delta(d+4))-\frac{1}{4m}},
	\end{equation}
	where $\frac{2}{m}<\delta<1$.
\end{proposition}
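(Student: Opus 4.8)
The plan is to adapt the factorization method of \cite[Section~A.2]{ORT20b} to the $L^1$ norm and to the environmental noise; the argument runs parallel to that of Proposition~\ref{supztp}. Fix $\alpha\in\big(\tfrac1m,\tfrac12\big)$, an interval that is nonempty precisely because $m>2$. Starting from \eqref{mollifiedzt} we first move the divergence onto the (compactly supported) mollifier, $\nabla\cdot e^{uA}\big(V^{N}\ast\mu\big)=e^{uA}\big((\nabla V^{N})\ast\mu\big)$, and then combine the factorization identity $e^{uA}=\frac{\sin\pi\alpha}{\pi}\int_{0}^{u}(u-\tau)^{\alpha-1}\tau^{-\alpha}e^{(u-\tau)A}e^{\tau A}\,\d\tau$ with stochastic Fubini to write
\begin{equation*}
	Z^{N}_{t}=\frac{\sin\pi\alpha}{\pi}\int_{0}^{t}(t-s)^{\alpha-1}e^{(t-s)A}\widetilde{Y}^{N}_{s}\,\d s,\qquad
	\widetilde{Y}^{N}_{s}=\sum_{k}\int_{0}^{s}(s-r)^{-\alpha}e^{(s-r)A}\big((\nabla V^{N})\ast(\sigma^{N}_{k}S^{N}_{r})\big)\,\d W^{k}_{r}.
\end{equation*}
Moving the derivative onto $V^{N}$ is essential here: it makes the inner stochastic integral well-defined, since $(s-r)^{-2\alpha}$ is integrable for $\alpha<\tfrac12$, while leaving only the $L^1$-contraction $e^{(t-s)A}$ in the outer integral; the price is the blow-up factor $\|\nabla V^{N}\|_{L^q}\sim N^{\beta(1+d/q')}$.

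Because $\|e^{(t-s)A}\|_{L^1\to L^1}\le1$ and $\alpha>\tfrac1m$, H\"older's inequality in $s$ gives $\sup_{t\le T}\|Z^{N}_{t}\|_{L^1}\le C_{\alpha,T,m}\big(\int_{0}^{T}\|\widetilde{Y}^{N}_{s}\|_{L^1}^{m}\,\d s\big)^{1/m}$, hence $\big\|\sup_{t\in[0,T]}\|Z^{N}_{t}\|_{L^1}\big\|_{L^m(\Omega)}\le C_{\alpha,T,m}\big(\int_{0}^{T}\E\,\|\widetilde{Y}^{N}_{s}\|_{L^1}^{m}\,\d s\big)^{1/m}$. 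To bound $\E\|\widetilde{Y}^{N}_{s}\|_{L^1}^{m}$ we pass through a weighted $L^2$ estimate, $\|f\|_{L^1}\le\big\|\langle\cdot\rangle^{-\gamma}\big\|_{L^2}\big\|\langle\cdot\rangle^{\gamma}f\big\|_{L^2}$ with $\langle x\rangle=(1+|x|^2)^{1/2}$ and $\gamma>d/2$ (its precise value, of order $d$, being responsible for the parameter $\delta\in(\tfrac2m,1)$ in the statement), and then --- using Minkowski's integral inequality ($m\ge2$) and the Burkholder--Davis--Gundy inequality pointwise in $x$ --- reduce to
\begin{equation*}
	\E\big|\widetilde{Y}^{N}_{s}(x)\big|^{m}\le C_{m}\bigg(\int_{0}^{s}(s-r)^{-2\alpha}\Big\|\sum_{k}\big|\widetilde{h}^{N,k}_{r}(x)\big|^{2}\Big\|_{L^{m/2}(\Omega)}\,\d r\bigg)^{m/2},\qquad
	\widetilde{h}^{N,k}_{r}(x)=\frac1N\sum_{i=1}^{N}\Psi^{N}_{s,r}(x-X^{N,i}_{r})\cdot\sigma^{N}_{k}(X^{N,i}_{r}),
\end{equation*}
where $\Psi^{N}_{s,r}:=g_{\nu(s-r)}\ast\nabla V^{N}$. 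The weight is split via $\langle x\rangle^{\gamma}\le C\langle X^{N,i}_{r}\rangle^{\gamma}\langle x-X^{N,i}_{r}\rangle^{\gamma}$, the first factor being absorbed by the moment bound of Lemma~\ref{1stmoment} (moments of order $m(d+1)$, whence the prefactor $\|K_{\epsilon}\|_{\infty}^{d+1}$), the second by the Gaussian decay of $\Psi^{N}_{s,r}$.

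The heart of the argument is the pointwise covariance sum: using $\sum_{k}\sigma^{N}_{k}(u)\otimes\sigma^{N}_{k}(v)=Q_{N}(u-v)$,
\begin{equation*}
	\sum_{k}\big|\widetilde{h}^{N,k}_{r}(x)\big|^{2}=\frac1{N^{2}}\sum_{i,j=1}^{N}\Psi^{N}_{s,r}(x-X^{N,i}_{r})^{\!\top}Q_{N}\big(X^{N,i}_{r}-X^{N,j}_{r}\big)\,\Psi^{N}_{s,r}(x-X^{N,j}_{r}).
\end{equation*}
The diagonal terms carry $Q_{N}(0)=2\nu I_{d}$; bounding $|\Psi^{N}_{s,r}|^{2}\le\|\nabla V^{N}\|_{\infty}\big(g_{\nu(s-r)}\ast|\nabla V^{N}|\big)$ and integrating in $x$, they contribute a factor of order $N^{\beta(d+1)}\cdot N^{-1}\cdot N^{\beta}$ times an integrable heat kernel. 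The off-diagonal terms --- which simply vanish in expectation in the independent-noise setting of \cite{ORT20b} --- are the delicate part: we split off the two $\Psi^{N}_{s,r}$ factors by H\"older in $\Omega$ and invoke Lemma~\ref{convratelemma} to extract the factor $(1+\|K_{\epsilon}\|_{\infty}^{2d})^{1/\ell}N^{-1/2}$, which survives the $N^{2}$ summation against the $N^{-2}$ prefactor. Collecting all powers of $N$ --- the positive ones from $\|\nabla V^{N}\|_{L^q}$ and the weight exponent $\gamma\sim d$, which combine into $\tfrac\beta2(d+\delta(d+4))$, and the $N^{-1/2}$ of Lemma~\ref{convratelemma}, turned into $N^{-1/(4m)}$ by the square root from Burkholder--Davis--Gundy and the $\d s$-integration --- produces \eqref{z1}. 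Since $\delta<1$, one has $d+\delta(d+4)<2(d+2)$, so the exponent in \eqref{z1} is negative for $\beta$ in the admissible range \eqref{hypobetarange}.

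The main obstacle is precisely this last piece of bookkeeping: one must verify that the $N^{-1/2}$ smallness obtained from Lemma~\ref{convratelemma} is not wiped out by the blow-up $\|\nabla V^{N}\|_{L^q}$ (positive powers of $N^{\beta}$) or by the time-singularity $(s-r)^{-2\alpha}$ created by the factorization --- which is exactly what forces the smallness of $\beta$ in \eqref{hypobetarange}. A further difficulty, specific to the $L^1$ norm and absent from the $L^p$ bound of Proposition~\ref{supztp}, is the lack of an It\^o isometry or reflexivity, which makes the detour through the weighted $L^2$ norm, and hence the use of the particle moment bounds of Lemma~\ref{1stmoment}, unavoidable; this is where both the prefactor $\|K_{\epsilon}\|_{\infty}^{d+1}$ and the parameter $\delta$ enter.
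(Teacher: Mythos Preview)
Your approach is legitimate but genuinely different from the paper's. The paper does \emph{not} use the factorization method; instead it proves a H\"older estimate $\|Z^N_t-Z^N_s\|_{L^m(\Omega,L^1)}\le C(t-s)^{\delta/2}$ and then invokes the Garsia--Rodemich--Rumsey-type Lemma~\ref{sup}. This forces the two-term splitting $\tilde M_1+\tilde M_2$ of \eqref{m1m21}: $\tilde M_1$ handles the stochastic integral over $[s,t]$, while $\tilde M_2$ handles the increment $e^{(t-s)A}-\mathrm{I}$ on $[0,s]$. Both terms are bounded by the same weighted-$L^2$ route you describe (weight $1+|x|^{(d+1)/2}$, covariance identity for $Q_N$, Lemmas~\ref{convratelemma} and~\ref{1stmoment}); the relevant technical estimates on $\int(1+|y|^{d+1})|\nabla e^{uA}V^N(y)|^2\,\d y$ are isolated as Lemmas~\ref{y4} and~\ref{1y4}. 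It is $\tilde M_2$, through the interpolation in Lemma~\ref{1y4}, that produces the exponent $\frac\beta2(d+\delta(d+4))$ in the statement; $\tilde M_1$ alone yields the smaller $\frac\beta2(d+2\delta)$.

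Your factorization avoids the $e^{(t-s)A}-\mathrm{I}$ term altogether, so if carried out carefully it should give the sharper exponent $\frac\beta2(d+2\delta)$, not the one you quote from the statement; your claim that the powers ``combine into $\frac\beta2(d+\delta(d+4))$'' is unsupported and almost certainly an over-estimate. This is harmless for proving the proposition as stated, but you should be explicit about it. One point that does need care in your scheme: after splitting the weight as $\langle x\rangle^{2\gamma}\le C\langle X^{N,i}_r\rangle^\gamma\langle X^{N,j}_r\rangle^\gamma\langle x-X^{N,i}_r\rangle^\gamma\langle x-X^{N,j}_r\rangle^\gamma$, the off-diagonal H\"older step in $\Omega$ has to separate \emph{three} random factors, $\langle X^{N,i}_r\rangle^\gamma\langle X^{N,j}_r\rangle^\gamma$, $|Q_N(X^{N,i}_r-X^{N,j}_r)|$, and the two $\Psi$-terms, not just two; make sure the resulting moment orders on $X^{N,i}_r$ stay within the $m(d+1)$ available from Lemma~\ref{1stmoment}, and that the exponent on $|Q_N|$ is at least $2$ so that Lemma~\ref{convratelemma} applies.
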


The proofs of Propositions \ref{supztp} and \ref{supzt1} are quite  technical and make use of the crucial Lemma \ref{convratelemma}; we postpone them to the appendices (see Sections \ref{sectiona3} and \ref{sectiona4}) in order to maintain the readability of the paper.

\begin{proof}[Proof of Theorem \ref{stochasticconvolution}]
	Combining \eqref{zp} and \eqref{z1}, for any small enough $\epsilon>0$, we get the result
	\begin{equation*}
		\bigg\|\sup_{t \in[0, T]}\left\| Z^N_t\right\|_{\L (\R^{d} )} \bigg\|_{L^m(\Omega)}\leq C_{m,\delta,T}\left\|K_{\epsilon}\right\|_{\infty}^{d+1}N^{\frac{\beta}{2}\big((2d+4\delta-\frac{2d}{p})\vee (d+\delta(d+4)) \big)-\frac{1}{4m}}.
	\end{equation*}
	If we take any $\delta\in \big(\frac{2}{m}, 1 \big)$ and $\beta$ satisfying
	$$0<\beta\leq\frac{1}{4m(d+2)},$$
	then we can find some $\iota>0$ such that \eqref{Niota} holds.
\end{proof}
\begin{remark}\label{betarange}
	The range of $\beta$ can be slightly larger.	In order that the right-hand sides of \eqref{zp} and \eqref{z1} tend to 0 as $N\to \infty$, we need to take a small $\beta$ such that $0<\beta<\frac{1}{2m\big(d+4\delta+d(1-\frac{2}{p})\vee \delta \big)}$, where $\frac{2}{m}<\delta<1$. Now for $0<\beta<\frac{1}{2md+16+2md\big( (1-\frac{2}{p})\vee\frac{2}{m}\big)},$ we can find a suitable $\delta$ such that the estimate \eqref{Niota} holds for some $\iota>0$.
\end{remark}

Now we can establish a uniform bound of the mollified empirical measure $\omega^N$.

\begin{proposition}\label{estimategtN}
	Assume Hypotheses \ref{hypothesis0}, \ref{hypothesis} and \ref{hypothesis-initial}; for any $t\in[0,T]$,
	\begin{equation*}
		\left\|\omega_{t}^{N}\right\|_{L^m(\Omega,L^p(\R^d))} \leq C_{\omega_{0},m,T}\big(1+\left\|K_{\eps}\right\|_{\infty}^{\frac{d}{m}}\big) \exp\big(C_{\nu,T}\left\|K_{\epsilon} \right\|_{\infty}^2 \big),
	\end{equation*}
	which is uniform in $N\in\N$.
\end{proposition}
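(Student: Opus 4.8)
The plan is to start from the mild formulation \eqref{mollified} of the mollified empirical measure and estimate the $L^p$-norm of each of the three terms, then close with the generalized Grönwall lemma (Lemma \ref{Gronwall}) with $\alpha = \tfrac12$ via the bound \eqref{mittagbound}. For the first term, $\|e^{tA}\omega_0^N\|_{L^p} \leq \|\omega_0^N\|_{L^p}$ since the heat semigroup is a contraction on $L^p$, and by Remark \ref{rem-initial} (first item) this is uniformly bounded in $L^m(\Omega)$; this produces the factor $C_{\omega_0,m,T}$. For the stochastic convolution term $Z_t^N$, I would directly invoke Theorem \ref{stochasticconvolution}, which gives $\big\|\sup_{t\in[0,T]}\|Z_t^N\|_{L^1\cap L^p}\big\|_{L^m(\Omega)} \leq C_{m,T}\|K_\epsilon\|_\infty^{d+1}N^{-\iota}$; since this tends to $0$, it is in particular bounded by a constant times $\|K_\epsilon\|_\infty^{d+1}$, which is absorbed into the exponential factor $\exp(C_{\nu,T}\|K_\epsilon\|_\infty^2)$ for $N$ not too small (and for small $N$ one argues crudely that the left side is finite).

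The heart of the estimate is the drift (convolution) term. Here I would write, using \eqref{nablaf} and the smoothing estimate \eqref{lpestimate} with $\varepsilon = \tfrac12$,
\begin{equation*}
	\bigg\| \int_0^t \nabla\cdot e^{(t-s)A}\big(V^N \ast ((K_\epsilon\ast\omega_s^N)S_s^N)\big)\,\d s \bigg\|_{L^p} \leq C_{\nu,T}\int_0^t (t-s)^{-1/2} \big\| V^N\ast((K_\epsilon\ast\omega_s^N)S_s^N)\big\|_{L^p}\,\d s.
\end{equation*}
Now the key observation is that $V^N\ast((K_\epsilon\ast\omega_s^N)S_s^N)$ should be compared with $(K_\epsilon\ast\omega_s^N)\,\omega_s^N$: by Young's inequality for the convolution against the probability density $V^N$, and using $\|K_\epsilon\ast\omega_s^N\|_\infty \leq C_K\|\omega_s^N\|_{L^1\cap L^p}$ from \eqref{l1p} (and the fact that $\omega^N$ has total mass one so its $L^1$-norm is controlled), one gets
\begin{equation*}
	\big\| V^N\ast((K_\epsilon\ast\omega_s^N)S_s^N)\big\|_{L^p} \leq \|K_\epsilon\ast\omega_s^N\|_\infty \, \|V^N\ast S_s^N\|_{L^p} \leq C_K\big(1+\|\omega_s^N\|_{L^p}\big)\|\omega_s^N\|_{L^p}.
\end{equation*}
Actually a cleaner route is to bound $\|K_\epsilon\ast\omega_s^N\|_\infty \leq \|K_\epsilon\|_\infty\|\omega_s^N\|_{L^1} = \|K_\epsilon\|_\infty$ directly (mass one), giving $\big\|V^N\ast((K_\epsilon\ast\omega_s^N)S_s^N)\big\|_{L^p} \leq \|K_\epsilon\|_\infty\|\omega_s^N\|_{L^p}$, which is linear in $\|\omega_s^N\|_{L^p}$ — this linearity is what makes Grönwall applicable and produces the $\|K_\epsilon\|_\infty$ inside the square in the exponential.

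Assembling, set $u(t) := \big\| \sup_{r\in[0,t]}\|\omega_r^N\|_{L^p}\big\|_{L^m(\Omega)}$ (or work pathwise and take $L^m(\Omega)$-norms at the end, using Minkowski's integral inequality to pull the expectation inside the time integral). One arrives at an inequality of the form
\begin{equation*}
	u(t) \leq a + C_{\nu,T}\|K_\epsilon\|_\infty \int_0^t (t-s)^{-1/2} u(s)\,\d s,
\end{equation*}
where $a = C_{\omega_0,m,T}\big(1 + \|K_\epsilon\|_\infty^{d/m}\big)$ collects the initial-data and stochastic-convolution contributions (the exponent $d/m$ on $\|K_\epsilon\|_\infty$ coming from the bound on $\omega_0^N$ in $L^m(\Omega,L^p)$; one should double-check this exponent against the cited estimate for $\omega_0^N$). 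Applying \eqref{mittagbound} with $b = C_{\nu,T}\|K_\epsilon\|_\infty$ and $\alpha=\tfrac12$ gives $u(t)\leq 2a\,e^{\pi C_{\nu,T}^2\|K_\epsilon\|_\infty^2 t}$, which is exactly the claimed bound. The main obstacle is bookkeeping rather than conceptual: carefully justifying the interchange of $L^m(\Omega)$ expectation with the time integral (Minkowski/Jensen), and tracking that the stochastic-convolution error $\|K_\epsilon\|_\infty^{d+1}N^{-\iota}$ and the initial term $\|K_\epsilon\|_\infty^{d/m}$ are both dominated by $\big(1+\|K_\epsilon\|_\infty^{d/m}\big)\exp(C_{\nu,T}\|K_\epsilon\|_\infty^2)$ — the latter holds since $e^{c x^2}$ eventually dominates any polynomial in $x$, uniformly once $\|K_\epsilon\|_\infty$ exceeds a fixed threshold, and for bounded $\|K_\epsilon\|_\infty$ everything is trivially bounded.
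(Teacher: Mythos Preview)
Your approach is essentially the paper's: mild formulation \eqref{mollified}, the pointwise bound $|V^N\ast((K_\epsilon\ast\omega_s^N)S_s^N)(x)| \leq \|K_\epsilon\|_\infty |\omega_s^N(x)|$ for the drift (your ``cleaner route''), then Lemma \ref{Gronwall} with \eqref{mittagbound}. One correction on bookkeeping: the factor $\|K_\epsilon\|_\infty^{d/m}$ does not come from $\omega_0^N$ (Remark \ref{rem-initial} gives a bound with no $K_\epsilon$-dependence) but from the stochastic convolution --- the paper invokes Proposition \ref{supztp} directly rather than Theorem \ref{stochasticconvolution}, obtaining $\|Z_t^N\|_{L^m(\Omega,L^p)} \leq C_{m,T}\|K_\epsilon\|_\infty^{d/m}$, which is what produces the stated prefactor without any absorption argument.
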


\begin{proof}
	Equation \eqref{mollified} yields that
	\begin{equation}\label{triangleineq}
		\begin{aligned}
			\left\|\omega_{t}^{N}\right\|_{L^m(\Omega,L^p)}\leq&\,\left\|e^{t A}\omega_{0}^{N}\right\|_{L^m(\Omega,L^p)}	+\left\|Z_t^N\right\|_{L^m(\Omega,L^p)}\\
			&\,+\int_{0}^{t}\left\|  \nabla \cdot e^{(t-s) A}\left( V^{N} \ast \left(\left(K_{\epsilon} \ast  \omega_{s}^{N}\right) S_{s}^{N}\right)\right) \right\|_{L^m(\Omega,L^p)}\d s.
		\end{aligned}
	\end{equation}
	By Remark \ref{rem-initial}, the first term on the right-hand side of \eqref{triangleineq} can be estimated as
	\begin{equation}\label{initialtermestimate}
		\left\|e^{t A}\omega_{0}^{N}\right\|_{L^m(\Omega,L^p)}\leq
		\left\|\omega_{0}^{N}\right\|_{L^m(\Omega,L^p)}\leq C_{\omega_{0}} .
	\end{equation}
	Next, by Proposition \ref{supztp}, we have the following estimate on the stochastic convolution:
	\begin{equation}\label{martingaltermestimate}
		\begin{aligned}
			\left\|Z_t^N\right\|_{L^m\left(\Omega,L^p\right)}\leq C_{m,T}\left\|K_{\eps}\right\|_{\infty}^{\frac{d}{m}}.
		\end{aligned}
	\end{equation}
	
	It remains to deal with the third term; one has
	\begin{equation*}
		\begin{aligned}
			&\,\int_{0}^{t}\left\| \nabla \cdot e^{(t-s) A}\left( V^{N} \ast \left(\left(K_{\epsilon} \ast  \omega_{s}^{N}\right) S_{s}^{N}\right)\right) \right\|_{L^m(\Omega,L^p)}\d s\\
			\leq &\, C\int_{0}^{t}\big\| (\mathrm{I}-A)^{\frac{1}{2}}    e^{(t-s) A}\big\|_{L^{p}\rightarrow L^{p}}\left\| V^{N} \ast \left(\left(K_{\epsilon} \ast  \omega_{s}^{N}\right) S_{s}^{N}\right) \right\|_{L^m(\Omega,L^p)}\d s.
		\end{aligned}
	\end{equation*}
	For any $x\in\R^d$,
	$$
	\left|\left(V^{N} \ast \left(\left(K_{\epsilon} \ast  \omega_{s}^{N}\right) S_{s}^{N}\right) \right) (x)\right|
	\leq \left\| K_{\epsilon}  \ast  \omega_{s}^{N}\right\|_{\infty}\left|\left(V^{N} \ast  S_{s}^{N}\right)  (x)\right|\leq\left\| K_{\epsilon}  \right\|_{\infty}\left|\omega_{s}^N (x)\right|,
	$$
	where the last inequality is due to
	\begin{equation}\label{kg}
		\left\|K_{\epsilon}\ast \omega_{s}^{N}\right\|_{\infty}=\sup_{x \in\R^d}
		\left|\int_{\R^d}K_{\epsilon}(x-y)\omega_{s}^{N}(y)\d y\right|\leq \left\|K_{\epsilon}\right\|_{\infty}\left\|\omega_{s}^{N}\right\|_{L^1}=\left\|K_{\epsilon}\right\|_{\infty}.
	\end{equation}
	Then by \eqref{lpestimate}, we have
	\begin{equation}\label{drifttermestimate}
		\begin{aligned}
			&\,\int_{0}^{t}\left\| \nabla \cdot e^{(t-s) A}\left( V^{N} \ast \left(\left(K_{\epsilon} \ast  \omega_{s}^{N}\right) S_{s}^{N}\right)\right) \right\|_{L^m(\Omega,L^p)}\d s\\
			\leq&\,C_{\nu,T}\left\|K_{\epsilon}\right\|_{\infty}\int_{0}^{t}\frac{1}{\sqrt{t-s}}\left\|\omega_{s}^{N}\right\|_{L^m(\Omega,L^p)}\d s.
		\end{aligned}
	\end{equation}
	Combining \eqref{initialtermestimate}, \eqref{martingaltermestimate} and \eqref{drifttermestimate}, we obtain
	\begin{equation*}
		\begin{aligned}
			\left\|\omega_{t}^{N}\right\|_{L^m(\Omega,L^p)}\leq C_{\omega_{0}}+C_{m,T}\left\|K_{\eps}\right\|_{\infty}^{\frac{d}{m}}+C_{\nu,T}\left\|K_{\epsilon}\right\|_{\infty}\int_{0}^{t}\frac{1}{\sqrt{t-s}}\left\|\omega_{s}^{N}\right\|_{L^m(\Omega,L^p)}\d s.
		\end{aligned}
	\end{equation*}
	Applying Lemma \ref{Gronwall} and estimate \eqref{mittagbound}, we deduce the desired estimate.
\end{proof}

\subsection{Proof of Theorem \ref{thm-main}}\label{sectionproof}

The method we use to prove our main result is inspired by \cite{ORT20b}. Recall the nonlinear Fokker-Planck equation \eqref{PDE-eps} with smooth kernel; in mild formulation it reads as
\begin{equation}\label{tildeomegat}
	\omega^{\epsilon}_{t}=e^{tA}\omega_{0}-\int_{0}^{t}e^{(t-s)A}\nabla\cdot\big((K_{\epsilon} \ast  \omega^{\epsilon}_s)\omega^{\epsilon}_s\big)\d s=e^{tA}\omega_{0}-\int_{0}^{t}\nabla\cdot e^{(t-s)A}\big((K_{\epsilon} \ast  \omega^{\epsilon}_s)\omega^{\epsilon}_s\big)\d s.
\end{equation}
The proof of Theorem \ref{thm-main} can be split into two steps, thanks to the following inequality:
{\small \begin{equation}\label{split}
		\bigg\| \sup_{t \in[0, T]} \big\|\omega_t-\omega_{t}^{N} \big\|_{\L} \bigg\|_{L^m(\Omega)}\leq\sup_{t \in[0, T]} \left\|\omega_t-\omega^{\epsilon}_{t} \right\|_{\L} + \bigg\| \sup_{t \in[0, T]} \big\|\omega^{\epsilon}_{t}-\omega_{t}^{N} \big\|_{\L} \bigg\|_{L^m(\Omega)}.
\end{equation} }
The next proposition shows the convergence of the first term on the right-hand side of \eqref{split}, and the estimate on the second term will be done in Proposition \ref{main1} at the end of this section.

\begin{proposition}\label{firstpart}
	Under Hypotheses \ref{hypothesis0}, \ref{hypothesis} and  \ref{hypothesis-initial}, for any $T\in(0,T_{max})$, the solution of \eqref{PDE-eps} converges to that of \eqref{PDE} in  $C\left([0,T];\L(\R^d)\right)$, namely
	$$
	\lim\limits_{\epsilon\rightarrow0}\sup_{t \in[0, T]}\left\|\omega_t-\omega^{\epsilon}_{t}\right\|_{\L}=0.
	$$
\end{proposition}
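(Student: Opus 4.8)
The plan is to compare the mild formulations \eqref{omegat} and \eqref{tildeomegat} and close a fractional Gronwall estimate in the norm of $\L(\R^d)$. Subtracting the two identities gives
$$\omega_t-\omega^\epsilon_t = -\int_0^t \nabla\cdot e^{(t-s)A}\big[(K\ast\omega_s)\omega_s-(K_\epsilon\ast\omega^\epsilon_s)\omega^\epsilon_s\big]\,\d s,$$
and I split the bracket into a ``kernel error'' plus two ``Gronwall'' pieces:
$$(K\ast\omega_s)\omega_s-(K_\epsilon\ast\omega^\epsilon_s)\omega^\epsilon_s = \big((K-K_\epsilon)\ast\omega_s\big)\omega_s + \big(K_\epsilon\ast\omega_s\big)(\omega_s-\omega^\epsilon_s) + \big(K_\epsilon\ast(\omega_s-\omega^\epsilon_s)\big)\omega^\epsilon_s.$$
Since $e^{(t-s)A}$ has heat kernel $g_{t-s}$ with $\|\nabla g_{t-s}\|_{L^1}\leq C(t-s)^{-1/2}$, Young's inequality yields $\|\nabla\cdot e^{(t-s)A}f\|_{L^q}\leq C(t-s)^{-1/2}\|f\|_{L^q}$ for both $q=1$ and $q=p$, hence
$$\|\omega_t-\omega^\epsilon_t\|_{\L}\leq C\int_0^t(t-s)^{-1/2}\big\|(K\ast\omega_s)\omega_s-(K_\epsilon\ast\omega^\epsilon_s)\omega^\epsilon_s\big\|_{\L}\,\d s.$$

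The quantitative input is the kernel-error term. Writing $(K-K_\epsilon)\ast\omega_s = K\ast(\omega_s-\rho_\epsilon\ast\omega_s)$ and using \eqref{l1p}, one gets $\|(K-K_\epsilon)\ast\omega_s\|_{\infty}\leq C_K\|\omega_s-\rho_\epsilon\ast\omega_s\|_{\L}$. Because $\omega\in C([0,T];\L)$, the set $\{\omega_s:s\in[0,T]\}$ is compact in $\L(\R^d)$, so $\rho_\epsilon\ast\omega_s\to\omega_s$ in $\L$ uniformly in $s$; set $\eta(\epsilon):=\sup_{s\in[0,T]}\|\omega_s-\rho_\epsilon\ast\omega_s\|_{\L}\to 0$ as $\epsilon\to 0$. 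Together with the bound $\|K_\epsilon\ast f\|_{\infty}\leq C_K\|f\|_{\L}$ from \eqref{l1p}, with $C_K$ \emph{independent of $\epsilon$}, the three pieces above are controlled in $\L$ by $C_K\eta(\epsilon)\|\omega_s\|_{\L}$, by $C_K\|\omega_s\|_{\L}\|\omega_s-\omega^\epsilon_s\|_{\L}$, and by $C_K\|\omega^\epsilon_s\|_{\L}\|\omega_s-\omega^\epsilon_s\|_{\L}$, respectively.

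Before invoking Gronwall I must ensure $\omega^\epsilon$ is defined up to $T<T_{max}$ with $\|\omega^\epsilon_s\|_{\L}$ bounded uniformly in small $\epsilon$. Let $M:=\sup_{s\in[0,T]}\|\omega_s\|_{\L}$ and let $T^\epsilon_*\leq T$ be the largest time for which $\omega^\epsilon$ exists on $[0,T^\epsilon_*]$ (by the local theory of \cite[Proposition 1.2]{ORT20b}, whose existence time for $K_\epsilon$ depends only on $\|\omega_0\|_{\L}$ and the uniform constant $C_K$) and satisfies $\sup_{s\leq T^\epsilon_*}\|\omega^\epsilon_s\|_{\L}\leq 2M$. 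On $[0,T^\epsilon_*]$ the estimates above give $u(t):=\|\omega_t-\omega^\epsilon_t\|_{\L}\leq a(t)+b\int_0^t(t-s)^{-1/2}u(s)\,\d s$ with $a(t)=2CC_K M\eta(\epsilon)\sqrt{t}$ nondecreasing and $b=3CC_K M$, so Lemma \ref{Gronwall} with $\alpha=\tfrac12$ together with \eqref{mittagbound} yields $\sup_{t\leq T^\epsilon_*}u(t)\leq C_{\omega_0,T}\,\eta(\epsilon)$. For $\epsilon$ small enough this is $<M/2$, whence $\sup_{s\leq T^\epsilon_*}\|\omega^\epsilon_s\|_{\L}<3M/2$ strictly; by local existence $\omega^\epsilon$ then continues past $T^\epsilon_*$ with the same bound, contradicting maximality unless $T^\epsilon_*=T$. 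Feeding $T^\epsilon_*=T$ back into the estimate gives $\sup_{t\in[0,T]}\|\omega_t-\omega^\epsilon_t\|_{\L}\leq C_{\omega_0,T}\,\eta(\epsilon)\to 0$, which is the claim.

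The main obstacle is exactly this last paragraph: transferring a priori control from the singular-kernel maximal time $T_{max}$ into a uniform-in-$\epsilon$ existence-and-boundedness statement for the regularized flows $\omega^\epsilon$ on $[0,T]$; the bootstrap works only because the bound $\|K_\epsilon\ast f\|_\infty\leq C_K\|f\|_{\L}$ is uniform in $\epsilon$. Everything else is a routine heat-semigroup and fractional-Gronwall computation. Note also that the convergence rate here is merely the (in general unquantified) rate of $\rho_\epsilon\ast\omega_s\to\omega_s$ in $\L$, which is the source of the slow second step mentioned after Theorem \ref{thm-main}.
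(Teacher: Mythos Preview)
Your proof is correct and follows essentially the same route as the paper: subtract the mild formulations, split the nonlinearity into the same three pieces, use $\|K_\epsilon\ast f\|_\infty\leq C_K\|f\|_{\L}$ uniformly in $\epsilon$, and close with the fractional Gr\"onwall Lemma~\ref{Gronwall}. The two minor differences are both in your favour. First, the paper simply \emph{asserts} the bound $\sup_{\epsilon>0}\|\omega^\epsilon\|_{C([0,T],\L)}\leq C$ and proceeds, whereas you supply the continuation/bootstrap argument that justifies it; this is the honest way to transfer control from $T<T_{max}$ for the singular problem to the regularized flows. Second, for the vanishing of the kernel-error term the paper applies H\"older in time and then dominated convergence to $\int_0^T\|\omega_s-\rho_\epsilon\ast\omega_s\|_{\L}^{q'}\,\d s$, while you invoke compactness of $\{\omega_s:s\in[0,T]\}$ in $\L$ to get the uniform rate $\eta(\epsilon)\to 0$ directly; both are standard and yield the same (unquantified) conclusion.
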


\begin{proof}
	The mild formulations \eqref{omegat}, \eqref{tildeomegat} and estimate \eqref{lpestimate} imply that
	\begin{equation}\label{omegaomgegaeps}	
		\begin{aligned}
			\left\|\omega_t-\omega^{\epsilon}_{t}\right\|_{\L}=&\,\left\|\int_{0}^{t}\nabla\cdot e^{(t-s) A} \big( (K\ast \omega_s) \omega_s- (K_{\epsilon}\ast \omega^{\epsilon}_{s})  \omega^{\epsilon}_{s} \big)\d s\right\|_{\L}\\\leq&\  C_{\nu,T}\int_{0}^{t}\frac{1}{\sqrt{t-s}}\left\| (K\ast \omega_s) \omega_s- (K_{\epsilon}\ast \omega^{\epsilon}_{s})  \omega^{\epsilon}_{s}\right\|_{\L}\d s.
		\end{aligned}
	\end{equation}
	For the integrand, one has
	\begin{equation*}
		\begin{aligned}
		&\, \left\| (K\ast \omega_s) \omega_s- (K_{\epsilon}\ast \omega^{\epsilon}_{s})  \omega^{\epsilon}_{s}\right\|_{\L}\\\leq&\, \left\| (K\ast \omega_s) (\omega_s-\omega^{\epsilon}_{s}) \right\|_{\L}+\left\| ((K-K_\epsilon)\ast \omega_s) \omega^{\epsilon}_{s} \right\|_{\L}\\
        &\,+\left\| (K_\epsilon*(\omega_s-\omega^{\epsilon}_{s}))\omega^{\epsilon}_{s}\right\|_{\L}\\\leq&\, \left\| K\ast \omega_s \right\|_{\infty} \left\|\omega_s-\omega^{\epsilon}_{s} \right\|_{\L}+\left\| (K-K_\epsilon)\ast \omega_s\right\|_{\infty} \left\|\omega^{\epsilon}_{s} \right\|_{\L}\\
        &\,+\left\| K_\epsilon*(\omega_s-\omega^{\epsilon}_{s})\right\|_{\infty}\left\|\omega^{\epsilon}_{s}\right\|_{\L}.\\
		\end{aligned}
	\end{equation*}
	By \eqref{l1p}, we have $\| K\ast \omega_s \|_{\infty} \leq C_K\|\omega_s \|_{L^1\cap L^p}$ and $\| K_{\eps}\ast (\omega_s- \omega^\epsilon_s) \|_{\infty} \leq C_K\|\omega_s -\omega^\epsilon_s \|_{L^1\cap L^p}$; moreover,
	$$\| (K-K_\epsilon) \ast \omega_s \|_{\infty}= \|K\ast (\omega_s - \rho_\epsilon \ast \omega_s)\|_\infty \leq C_K\|\omega_s - \rho_\epsilon \ast \omega_s\|_{L^1\cap L^p}.$$
	Therefore,
	\begin{equation}\label{komega}
		\begin{aligned}
			&\,\left\| (K\ast \omega_s) \omega_s- (K_{\epsilon}\ast \omega^{\epsilon}_{s})  \omega^{\epsilon}_{s}\right\|_{\L}\\
			\leq&\, C_K \left\|\omega_s-\omega^{\epsilon}_{s} \right\|_{\L}\left(\left\| \omega_s \right\|_{\L}+\left\| \omega^{\epsilon}_s \right\|_{\L}\right)+C_K\left\| \omega_s-\rho_{\epsilon}\ast \omega_{s}\right\|_{\L}\left\| \omega^{\epsilon}_{s} \right\|_{\L}\\
			\leq&\, C\left\|\omega_s-\omega^{\epsilon}_{s} \right\|_{\L}+C\left\| \omega_s-\rho_{\epsilon}\ast \omega_{s}\right\|_{\L},
		\end{aligned}
	\end{equation}
	where the last inequality is due to
	$$
	\left\|\omega\right\|_{C\left([0,T],\L(\R^d)\right)}\bigvee \sup_{\epsilon>0}\left\|\omega^{\epsilon}\right\|_{C\left([0,T],\L(\R^d)\right)}\leq C<\infty.
	$$
	Substituting estimate \eqref{komega} into \eqref{omegaomgegaeps} and applying Lemma \ref{Gronwall}, we deduce that
	\begin{equation}\label{boundomega}
		\begin{aligned}
			\sup_{t \in[0, T]}\left\|\omega_t-\omega^{\epsilon}_{t}\right\|_{\L}\leq&\, C_T\, \bigg(\sup_{t \in[0, T]}\int_{0}^{t} \frac{1}{\sqrt{t-s}} \left\| \omega_s-\rho_{\epsilon}\ast \omega_{s}\right\|_{\L}\d s \bigg).
		\end{aligned}
	\end{equation}		
	By H\"older's inequality, for some $q\in(1,2)$ with its conjugate number $q^{\prime}$,
	$$
	\begin{aligned}
		&\,\sup_{t \in[0, T]}\int_{0}^{t}\frac{1}{\sqrt{t-s}}	\left\| \omega_s-\rho_{\epsilon}\ast \omega_{s}\right\|_{\L}\d s\\\leq&\,\sup_{t \in[0, T]}\bigg(\int_{0}^{t}\frac{1}{(t-s)^{\frac{q}{2}}}\d s\bigg)^{\frac{1}{q}}\left(\int_{0}^{t}\left\| \omega_s-\rho_{\epsilon}\ast \omega_{s}\right\|^{q^{\prime}}_{\L}\d s\right)^{\frac{1}{q^{\prime}}}\\\leq&\,C_{T,q}\left(\int_{0}^{T}\left\| \omega_s-\rho_{\epsilon}\ast \omega_{s}\right\|^{q^{\prime}}_{\L}\d s\right)^{\frac{1}{q^{\prime}}}.
	\end{aligned}
	$$
	It is clear that
	$
	\lim\limits_{\epsilon\rightarrow0}	\left\| \omega_s-\rho_{\epsilon}\ast \omega_{s}\right\|_{\L}=0
	$
	for any $s\in[0,T]$, by dominated convergence theorem, we deduce that
	$$
	\lim\limits_{\epsilon\rightarrow0}\int_{0}^{T}\left\| \omega_s-\rho_{\epsilon}\ast \omega_{s}\right\|^{q^{\prime}}_{\L}\d s=0.
	$$
	Combining these results with \eqref{boundomega}, we finish the proof.
\end{proof}

Now we aim to estimate the second part on the right-hand side of \eqref{split}.
Recall the mild formulations \eqref{mollified} and \eqref{tildeomegat}; we have
\begin{equation}\label{estimate}
	\begin{aligned}
		\left\|\omega^{\epsilon}_{t}-\omega_{t}^{N}\right\|_{\L}\leq&\,\left\|e^{tA}\left(\omega_0-\omega_{0}^{N}\right)\right\|_{\L}+\left\| Z^N_t\right\|_{\L}\\
		&+\int_{0}^{t}\left\| \nabla\cdot e^{(t-s) A}\left( (K_\epsilon*\omega^{\epsilon}_{s}) \omega^{\epsilon}_{s}- V^{N}*\left( (K_{\epsilon}\ast \omega^N_s)S_s^N\right)\right)\right\|_{\L}\d s.
	\end{aligned}
\end{equation}
The third term on the right-hand side of \eqref{estimate} can be estimated as follows: by \eqref{lpestimate},
\begin{equation}\label{integrandsecond}
	\begin{aligned}
		&\,\int_{0}^{t}\left\| \nabla\cdot e^{(t-s) A}\left( (K_\epsilon*\omega^{\epsilon}_{s}) \omega^{\epsilon}_{s}- V^{N}*\left( (K_{\epsilon}\ast \omega^N_s)S_s^N\right)\right)\right\|_{\L}\d s\\
		\leq&\,C_{\nu,T}\int_{0}^{t} \frac{1}{\sqrt{t-s}}\left\| (K_\epsilon*\omega^{\epsilon}_{s}) \omega^{\epsilon}_{s}- V^{N}*\left( (K_{\epsilon}\ast \omega^N_s)S_s^N\right)\right\|_{\L}\d s.\\
	\end{aligned}
\end{equation}
We observe that
\begin{equation*}
	\begin{aligned}
		& \left\| (K_\epsilon*\omega^{\epsilon}_{s}) \omega^{\epsilon}_{s}- V^{N}\ast\left( (K_{\epsilon}\ast \omega^N_s)S_s^N\right) \right\|_{\L} \\
		\leq& \left\| (K_{\epsilon}\ast \omega^{\epsilon}_{s}) \omega^{\epsilon}_{s}- (K_{\epsilon}\ast \omega^N_s)  \omega^N_s\right\|_{\L} +\left\| (K_{\epsilon}\ast \omega^N_s)  \omega^N_s- V^{N}\ast \left( (K_{\epsilon}\ast \omega^N_s)S_s^N\right)\right\|_{\L}\\
		=:&\, L_1+L_2.
	\end{aligned}
\end{equation*}

For the first term $L_1$, note that
\begin{equation*}
	(K_{\epsilon}\ast \omega^{\epsilon}_{s}) \omega^{\epsilon}_{s}- (K_{\epsilon}\ast \omega^N_s)  \omega^N_s=(K_{\epsilon}\ast (\omega^{\epsilon}_{s}-\omega^N_s))\omega^{\epsilon}_{s}+(K_{\epsilon}\ast \omega^N_s)(\omega^{\epsilon}_{s}-\omega^N_s);
\end{equation*}
using the bound $\sup_{\epsilon>0}\left\|\omega^{\epsilon}\right\|_{C([0,T],\L)}\leq C$ and \eqref{l1p}, it holds
\begin{equation*}
	\begin{aligned}
		&\, \left\|(K_{\epsilon}\ast (\omega^{\epsilon}_{s}-\omega^N_s))\omega^{\epsilon}_{s}\right\|_{\L}  \\ \leq&\,\left\|K_{\epsilon}\ast \left(\omega^{\epsilon}_{s}-\omega^N_s\right)\right\|_{\infty}\left\|\omega^{\epsilon}_{s}\right\|_{\L} \leq C\left\|K\ast\rho_{\eps}\ast \left(\omega^{\epsilon}_{s}-\omega^N_s\right)\right\|_{\infty} \\
		\leq & \,C_K\left\| \rho_{\eps}\ast \left(\omega^{\epsilon}_{s}-\omega^N_s\right)\right\|_{\L}\leq\,C_K\left\|  \omega^{\epsilon}_{s}-\omega^N_s\right\|_{\L}.
	\end{aligned}
\end{equation*}
Next, by \eqref{kg}, we have
\begin{equation*}
	\begin{aligned}
		\left\|(K_{\epsilon}\ast \omega^N_s)(\omega^{\epsilon}_{s}-\omega^N_s)\right\|_{\L}
		&\leq\left\|K_{\epsilon}\ast \omega_{s}^{N}\right\|_{\infty}\left\|\omega^{\epsilon}_{s}-\omega^N_s\right\|_{\L}\\
		& \leq\left\|K_{\epsilon}\right\|_{\infty}\left\|\omega^{\epsilon}_{s}-\omega^N_s\right\|_{\L}.
	\end{aligned}
\end{equation*}
Thus, for any small enough $\eps>0$,
\begin{equation}\label{kk}
	\begin{aligned}
		L_1
		\leq\, \big(C_K+\left\|K_{\epsilon}\right\|_{\infty}\big)\left\|\omega^{\epsilon}_{s}-\omega^N_s\right\|_{\L}
		\leq\, C\left\|K_{\epsilon}\right\|_{\infty}\left\|\omega^{\epsilon}_{s}-\omega^N_s\right\|_{\L},
	\end{aligned}
\end{equation}
where the constant $C$ is independent of $\epsilon$.

For the second term $L_2$, we have
$$L_2=	\left\|\int_{\R^d} V^N(x-y)\left( (K_{\epsilon}\ast \omega^N_s)(x)- (K_{\epsilon}\ast \omega^N_s)(y)\right)S_s^N(\d y)\right\|_{\L}.$$
We observe that
$$
\begin{aligned}
	\left| (K_{\epsilon}\ast \omega^N_s)(x)- (K_{\epsilon}\ast \omega^N_s)(y)\right|
	&\leq\,\int_{\R^d}\left|K_{\epsilon}(x-z)-K_{\epsilon}(y-z)\right|\omega^N_s(z)\d z \\
	&\leq \left\|\nabla K_{\epsilon}\right\|_{\infty}\left|x-y\right|.
\end{aligned}
$$
Therefore, it yields
\begin{equation}\label{yx}
	\begin{aligned}
		L_2\leq	&\,\left\|\int_{\R^d} V^N(x-y)\left| (K_{\epsilon}\ast \omega^N_s)(x)- (K_{\epsilon}\ast \omega^N_s)(y)\right|S_s^N(\d y)\right\|_{\L}\\
		\leq &\,\left\|\nabla K_{\epsilon}\right\|_{\infty}\left\|\int_{\R^d} V^N(x-y)\left|x-y\right|S_s^N(\d y)\right\|_{\L}\\
		\leq&\,\frac{C\left\|\nabla K_{\epsilon}\right\|_{\infty}}{N^{\beta}}\left\|\omega_{s}^{N}\right\|_{\L},
	\end{aligned}
\end{equation}
where the last inequality is because that $V^N$ is supported on $\frac{\supp V}{N^{\beta}}$, where $\supp V$ denotes the support of $V$.

Substituting \eqref{kk} and \eqref{yx} into \eqref{integrandsecond} yields
\begin{equation}\label{secondomegaN}
	\begin{aligned}
		&\,\int_{0}^{t}\left\| \nabla\cdot e^{(t-s) A}\left( (K_\epsilon*\omega^{\epsilon}_{s}) \omega^{\epsilon}_{s}- V^{N}*\left( (K_{\epsilon}\ast \omega^N_s)S_s^N\right)\right)\right\|_{\L}\d s\\
		\leq&\,\frac{C\left\|\nabla K_{\epsilon}\right\|_{\infty}}{N^{\beta}}\int_{0}^{t} \frac{\left\|\omega_{s}^{N}\right\|_{\L}}{\sqrt{t-s}} \d s +C\left\|K_{\epsilon}\right\|_{\infty} \int_{0}^{t}\frac{\left\|\omega^{\epsilon}_{s}-\omega^N_s\right\|_{\L}}{\sqrt{t-s}} \d s.
	\end{aligned}
\end{equation}
Combining this estimate with \eqref{estimate} and using the simple inequality $ \big\| e^{tA} \big(\omega_0-\omega_{0}^{N}\big) \big\|_{\L}\leq \left\|\omega_0-\omega_{0}^{N} \right\|_{\L} $ , we deduce that
$$\aligned
\left\|\omega^{\epsilon}_t-\omega_{t}^{N}\right\|_{\L} &\leq \big\|\omega_0 - \omega^N_0 \big\|_{L^1\cap L^p} + \big\| Z^N_t \big\|_{L^1\cap L^p} \\
&\quad + \frac{C \|\nabla K_{\epsilon} \|_{\infty}}{N^{\beta}}\int_{0}^{t} \frac{\left\|\omega_{s}^{N}\right\|_{\L}}{\sqrt{t-s}} \d s +C \|K_{\epsilon} \|_{\infty} \int_{0}^{t}\frac{\left\|\omega^{\epsilon}_{s}-\omega^N_s\right\|_{\L}}{\sqrt{t-s}} \d s.
\endaligned $$
By Lemma \ref{Gronwall},
\begin{equation}\label{omegamittag}
	\begin{aligned}
		&\,\sup_{t \in[0, T]}\left\|\omega^{\epsilon}_t-\omega_{t}^{N}\right\|_{\L} \\
		\leq&\,\bigg( \left\|\omega_0-\omega_{0}^{N} \right\|_{\L} +\frac{C\left\|\nabla K_{\epsilon} \right\|_{\infty}}{N^{\beta}}\sup_{t \in[0, T]} \int_{0}^{t}\frac{\left\|\omega^N_s\right\|_{\L}}{\sqrt{t-s}} \d s+\sup_{t \in[0, T]}\left\|Z^N_{t}\right\|_{\L}	\bigg)\\
       & \times E_{\frac{1}{2}}\big(C_T\left\|K_{\epsilon}\right\|_{\infty}\big),\\
	\end{aligned}
\end{equation}
where, by \eqref{estimationmittag}, one has
\begin{equation}\label{estimationKmittag}
	E_{\frac{1}{2}}\left(C_T\left\|K_{\epsilon}\right\|_{\infty}\right)\leq 2\mathrm{exp}\big(C\left\|K_{\epsilon}\right\|_{\infty}^2\big).
\end{equation}
Since  $m>2$ and $m^{\prime}:=\frac{m}{m-1}\in(1,2)$, we have
\begin{equation*}
	\begin{aligned}
		&\,	\bigg\|\sup_{t \in[0, T]}\int_{0}^{t}\frac{1}{\sqrt{t-s}}\left\|\omega^N_s\right\|_{\L}\d s \bigg\|_{L^m(\Omega)}\\
		\leq&  \left\|\sup_{t \in[0, T]}\bigg(\int_{0}^{t}\left\|\omega^N_s\right\|^m_{\L}\d s\bigg)^{\frac{1}{m}}\bigg(\int_{0}^{t} \frac{1}{(t-s)^{\frac{m'}2}}\, \d s\bigg)^{\frac{1}{m^{\prime}}}\right\|_{L^m(\Omega)}\\
		\leq&\,  C_{T,m} \bigg(\E\int_{0}^{T}\left\|\omega^N_s\right\|^m_{\L}\d s\bigg)^{\frac{1}{m}}
		\leq\, C_{T,m} \sup_{t \in[0, T]}\left\|\omega_{t}^{N}\right\|_{L^m(\Omega,\L)}.
	\end{aligned}
\end{equation*}
By Proposition \ref{estimategtN}, for any small enough $\epsilon>0$, we already have
\begin{equation*}
	\sup_{t \in[0, T]}\left\|\omega_{t}^{N}\right\|_{L^m(\Omega,L^p)} \leq C\left\|K_{\eps}\right\|_{\infty}^{\frac{d}{m}}\exp \big(C\left\|K_{\epsilon}\right\|_{\infty}^2 \big) \quad \text{and} \quad\sup_{t \in[0, T]}\left\|\omega_{t}^{N}\right\|_{L^m(\Omega,L^1)}=1,
\end{equation*}
then \begin{equation}\label{estimationsecond}
	\begin{aligned}
		\bigg\| \sup_{t \in[0, T]}\int_{0}^{t}\frac{1}{\sqrt{t-s}}\left\|\omega^N_s \right\|_{\L} \d s\bigg\|_{L^m(\Omega)}\leq\, C_T\left\|K_{\eps}\right\|_{\infty}^{\frac{d}{m}} \exp\big( C\left\|K_{\epsilon}\right\|_{\infty}^2 \big).
	\end{aligned}
\end{equation}
For the first term in the second line of \eqref{omegamittag}, by Hypotheses \ref{hypothesis-initial},
\begin{equation}
	\left\|\omega_0-\omega_{0}^{N}\right\|_{L^m(\Omega,\L)}= \zeta_N\rightarrow 0\quad\text{as}\quad N\rightarrow\infty.
\end{equation}
For the stochastic convolution term in \eqref{omegamittag}, Theorem \ref{stochasticconvolution} yields that
\begin{equation}\label{estimationstocon}
	\bigg\| \sup_{t \in[0, T]}\left\| Z^N_t\right\|_{\L (\R^{d} )} \bigg\|_{L^m(\Omega)} \leq C\left\| K_{\epsilon} \right\|_{\infty}^{d+1} N^{-\iota},
\end{equation}
where $\iota$ relies on the choice of $\beta$ in the potential function $V^N$. Recall the estimates in Section \ref{sectionkernel}:  $\left\| K_{\epsilon} \right\|_{\infty} \leq C\epsilon^{-d/p^{\prime}}$ and $\left\|\nabla K_{\epsilon}\right\|_{\infty}\leq C\epsilon^{-1-d/p^{\prime}}$ for any small enough $\epsilon>0$;  combining from \eqref{omegamittag} 
to \eqref{estimationstocon}, we deduce that
\begin{equation}\label{mainGronwall}
	\begin{aligned}
		&\,\bigg\| \sup_{t \in[0, T]}\left\|\omega^{\epsilon}_t-\omega_{t}^{N}\right\|_{\L} \bigg\|_{L^m(\Omega)}\\
		\leq&\,C\Bigg[ \zeta_N +\frac{\epsilon^{-1-\frac{d}{p^{\prime}}-\frac{d^2}{mp^{\prime}}}}{N^{\beta}} \exp\left(C^{\prime} \epsilon^{-\frac{2d}{p^{\prime}}} \right) +\frac{\epsilon^{-\frac{d(d+1)}{p^{\prime}}}}{N^{\iota}} \Bigg] \exp\left(C^{\prime}\epsilon^{-\frac{2d}{p^{\prime}}}\right).
	\end{aligned}
\end{equation}
In conclusion, we obtain the following result.

\begin{proposition}\label{main1}
	Assume Hypotheses \ref{hypothesis0}, \ref{hypothesis} and \ref{hypothesis-initial}; 
	for some $\theta>0$ satisfying $C^{\prime}\theta<\mathrm{min}\big\{\frac{\beta}{2},\iota\big\}<1$ where $C^{\prime}$, $\beta$ and $\iota$ are the same as in \eqref{mainGronwall}, we take
	\begin{equation*}
		\epsilon= \epsilon(N)= \big[(\theta\log N)\wedge (-\theta\log \zeta_N)\big]^{-\frac{p^{\prime}}{2d}}.
	\end{equation*}
	Then for any $T\in(0,T_{max})$ and small $\kappa>0$, the difference between the mollified empirical measure \eqref{mollifiedmeasure} and solution of \eqref{PDE-eps} can be estimated as
	\begin{equation}\label{secondpartlimit}
		\bigg\|\sup_{t \in[0, T]}\left\|\omega^{\epsilon}_t-\omega_{t}^{N}\right\|_{\L} \bigg\|_{L^m(\Omega)}\leq C\big(\zeta_N^{1-C^{\prime}\theta}\vee N^{-(\beta-2C^{\prime}\theta)+\kappa}\vee N^{-(\iota-C^{\prime}\theta)+\kappa} \big).
	\end{equation}
\end{proposition}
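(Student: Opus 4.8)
The plan is to substitute the proposed choice of $\epsilon=\epsilon(N)$ directly into the master estimate \eqref{mainGronwall} and to track how each of its three contributions scales in $N$. First I would record that, since $\zeta_N\to 0$, for all sufficiently large $N$ one has $0<\zeta_N<1$ and $\log N>0$, so $(\theta\log N)\wedge(-\theta\log\zeta_N)$ is a well-defined positive number, $\epsilon(N)$ is meaningful, and $\epsilon(N)\to 0$ — the latter being exactly what is needed for the first step (Proposition \ref{firstpart}) in the proof of Theorem \ref{thm-main}. The calibration is designed around the identity
\[
\epsilon^{-\frac{2d}{p'}}=(\theta\log N)\wedge(-\theta\log\zeta_N),
\]
so that the dangerous exponential factor in \eqref{mainGronwall} admits the two elementary bounds
\[
\exp\!\big(C'\epsilon^{-\frac{2d}{p'}}\big)\le e^{C'\theta\log N}=N^{C'\theta}
\qquad\text{and}\qquad
\exp\!\big(C'\epsilon^{-\frac{2d}{p'}}\big)\le e^{-C'\theta\log\zeta_N}=\zeta_N^{-C'\theta}.
\]

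Next I would observe that every negative power of $\epsilon$ occurring in \eqref{mainGronwall} is only polylogarithmic in $N$: from $\epsilon^{-2d/p'}\le\theta\log N$ one gets $\epsilon^{-a}\le(\theta\log N)^{ap'/(2d)}$ for each $a>0$, and since $(\log N)^c=o(N^\kappa)$ for all $c,\kappa>0$ such factors are absorbed into $N^{\kappa}$ for $N$ large. With these two observations the three terms of \eqref{mainGronwall} are estimated separately. In the term carrying $\zeta_N$, which is multiplied only by the outer exponential, I would use the $\zeta_N^{-C'\theta}$ bound to get $\zeta_N\cdot\zeta_N^{-C'\theta}=\zeta_N^{1-C'\theta}$. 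In the term carrying $N^{-\beta}$, which picks up two exponential factors (the one inside the bracket and the outer one), I would bound $\exp(2C'\epsilon^{-2d/p'})\le N^{2C'\theta}$ and absorb $\epsilon^{-1-d/p'-d^2/(mp')}$ into $N^{\kappa}$, obtaining $N^{-\beta+2C'\theta+\kappa}$. In the term carrying $N^{-\iota}$, which picks up only the outer exponential, I would bound $\exp(C'\epsilon^{-2d/p'})\le N^{C'\theta}$ and absorb $\epsilon^{-d(d+1)/p'}$ into $N^{\kappa}$, obtaining $N^{-\iota+C'\theta+\kappa}$. Summing the three contributions and bounding a sum by three times its maximum yields exactly \eqref{secondpartlimit}; the hypothesis $C'\theta<\min\{\beta/2,\iota\}<1$ then makes each of the three exponents deliver genuine decay once $\kappa$ is chosen small.

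There is no real analytic difficulty at this stage: once \eqref{mainGronwall} is in hand, the proof is pure bookkeeping. The genuinely delicate point — already resolved in deriving \eqref{mainGronwall} and in choosing $\epsilon(N)$ — is the compromise behind the calibration: $\epsilon$ must tend to $0$ so that $\omega^\epsilon$ approximates the solution of the singular equation \eqref{PDE}, yet the semigroup estimates produce the factor $\exp(C'\epsilon^{-2d/p'})$, which explodes as $\epsilon\to 0$; letting $\epsilon^{-2d/p'}$ grow only like $\log N$, with the small constant $\theta$ in front, is precisely what keeps that factor dominated by the polynomial gains $N^{-\beta}$ and $N^{-\iota}$, at the (unavoidable) cost of a very slow overall rate.
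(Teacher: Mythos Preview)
Your proposal is correct and follows essentially the same line as the paper: both proceed by plugging the choice of $\epsilon(N)$ into \eqref{mainGronwall}, using $\epsilon^{-2d/p'}=(\theta\log N)\wedge(-\theta\log\zeta_N)$ to bound the exponential factor by $N^{C'\theta}\wedge\zeta_N^{-C'\theta}$, and then absorbing the remaining negative powers of $\epsilon$ (which are polylogarithmic in $N$) into the $N^{\kappa}$ loss. The paper in fact dispatches this in two displayed lines with the remark that ``the proof is obvious''; your write-up is, if anything, more explicit.
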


The proof is obvious since we notice that
$$\exp\left(C^{\prime}\epsilon(N)^{-\frac{2d}{p^{\prime}}}\right) =\exp\left((C^{\prime}\theta\log N)\wedge (-C^{\prime}\theta\log \zeta_N)\right)=N^{C^{\prime}\theta}\wedge \zeta_N^{-C^{\prime}\theta}, $$
and
\begin{equation*}
	\begin{aligned}
		\bigg\| \sup_{t \in[0, T]}\left\|\omega^{\epsilon}_t-\omega_{t}^{N}\right\|_{\L} \bigg\|_{L^m(\Omega)}
		\leq\, C\Bigg[ \frac{\zeta_N}{\zeta_N^{C^{\prime}\theta}}+\frac{(\theta\log N)^{\frac{1}{2}\big(1+\frac{p^{\prime}}{d}+\frac{d}{m}\big)}}{N^{\beta-2C^{\prime}\theta}}
		+\frac{\left(\theta\log N\right)^{\frac{d+1}{2}}}{N^{\iota-C^{\prime}\theta}}\Bigg].
	\end{aligned}
\end{equation*}

\begin{proof}[Proof of Theorem \ref{thm-main}]
	If we also take $\epsilon=\epsilon(N)\sim \big[(\log N)\wedge (-\log \zeta_N)\big]^{-\frac{p^{\prime}}{2d}}$ in Proposition \ref{firstpart}, it holds that
	\begin{equation}\label{firstpartlimit}
		\lim\limits_{N\rightarrow\infty}\sup_{t \in[0, T]}\left\|\omega_t-\omega^{\epsilon}_{t}\right\|_{\L(\R^d)}=0.
	\end{equation}
	Therefore, substituting \eqref{secondpartlimit} and \eqref{firstpartlimit} into \eqref{split}, we finish the proof of Theorem \ref{thm-main}.
\end{proof}




\begin{appendix}

\section{Estimates on stochastic convolution}\label{appendixstochastic}

The purpose of this part is to prove Propositions \ref{supztp} and \ref{supzt1} concerning the stochastic convolution $Z_t^N$ defined in \eqref{mollifiedzt}. First, we present a useful lemma, which is a consequence of Garsia-Rodemich-Rumsey's Lemma \cite{GR70} for Banach spaces (see proof in \cite[Theorem A.1]{FK10}).

\begin{lemma}\label{sup}
	Let $E$ be a Banach space and $\left(Y^{N}\right)_{N \geq 1}$ a sequence of $E$-valued continuous processes on $[0, T]$. For $m \geq 1$ and $\eta>0$ satisfying $m \eta>1$, we assume that there exist constants $\rho>0$, $C>0$ and a sequence $\left(\chi_{N}\right)_{N \geq 1}$ of positive real numbers such that
	$$ \left\|Y_{t}^{N}-Y_{s}^{N}\right\|_{L^m(\Omega,E)}  \leq C|t-s|^{\eta} \chi_{N}^{\rho}, \quad \forall s, t \in[0, T],\, \forall N \geq 1.
	$$
	Then there exists a constant $C_{m, \eta, T}>0$, depending only on $m, \eta$ and $T$, such that for any $ N \geq 1$,
	$$
	\left(\E \bigg[\sup _{t \in[0, T]}\left\|Y_{t}^{N}-Y_{0}^{N}\right\|_{E}^{m} \bigg] \right)^{\frac{1}{m}} \leq C_{m, \eta, T} \, \chi_{N}^{\rho}.
	$$
\end{lemma}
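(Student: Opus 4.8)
The plan is to deduce the lemma from the Garsia--Rodemich--Rumsey (GRR) inequality for Banach-space-valued continuous functions, applied pathwise, and then to take expectations. Fix $N$ and a sample $\omega\in\Omega$; since $t\mapsto Y^N_t(\omega)$ is continuous with values in $E$, the GRR lemma (see \cite{GR70}, and \cite[Theorem A.1]{FK10} for the Banach-space formulation we shall quote) applies with $\Psi(u)=u^m$ and $p(u)=u^{\alpha}$, where $\alpha$ is a parameter to be fixed. It yields, for all $s,t\in[0,T]$,
$$\big\|Y^N_t(\omega)-Y^N_s(\omega)\big\|_E\leq C_{m,\alpha}\,B_N(\omega)^{1/m}\,|t-s|^{\alpha-2/m},\qquad B_N(\omega):=\int_0^T\!\!\int_0^T\frac{\|Y^N_t(\omega)-Y^N_s(\omega)\|_E^m}{|t-s|^{\alpha m}}\,\d s\,\d t,$$
the bound being valid as soon as $\alpha>2/m$, which is exactly what makes the integral $\int_0^{\cdot}u^{\alpha-1-2/m}\,\d u$ appearing in the GRR estimate converge at the origin.

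Next I would estimate $\E[B_N]$. By Tonelli's theorem --- legitimate because $(s,t)\mapsto\|Y^N_t-Y^N_s\|_E$ is jointly continuous, hence jointly measurable --- together with the hypothesis $\|Y^N_t-Y^N_s\|_{L^m(\Omega,E)}\leq C|t-s|^{\eta}\chi_N^{\rho}$,
$$\E[B_N]=\int_0^T\!\!\int_0^T\frac{\E\|Y^N_t-Y^N_s\|_E^m}{|t-s|^{\alpha m}}\,\d s\,\d t\leq C^m\chi_N^{\rho m}\int_0^T\!\!\int_0^T|t-s|^{(\eta-\alpha)m}\,\d s\,\d t.$$
The last double integral is finite precisely when $(\eta-\alpha)m>-1$, i.e. $\alpha<\eta+1/m$. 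Because $m\eta>1$ by assumption, the interval $(2/m,\,\eta+1/m)$ is nonempty; fixing any $\alpha$ inside it gives $\E[B_N]\leq C_{m,\eta,T}\,\chi_N^{\rho m}$, and in particular $B_N<\infty$ almost surely.

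Finally I would combine the two steps. Taking $s=0$ in the pathwise GRR bound, raising to the power $m$, and using $|t|\leq T$ together with $\alpha m-2>0$,
$$\sup_{t\in[0,T]}\big\|Y^N_t-Y^N_0\big\|_E^m\leq C_{m,\alpha}^m\,T^{\alpha m-2}\,B_N;$$
taking expectations and then the $m$-th root yields $\big(\E\sup_{t\in[0,T]}\|Y^N_t-Y^N_0\|_E^m\big)^{1/m}\leq C_{m,\eta,T}\,\chi_N^{\rho}$ with a constant independent of $N$, as claimed. There is no serious obstacle here; the only point requiring genuine care is that the admissible window $(2/m,\,\eta+1/m)$ for the exponent $\alpha$ is nonempty, and this is exactly what the assumption $m\eta>1$ guarantees. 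Alternatively, one may simply check that the hypotheses of \cite[Theorem A.1]{FK10} are met and invoke it directly.
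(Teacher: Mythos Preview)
Your proof is correct and follows exactly the approach the paper indicates: the paper does not write out a proof but simply states that the lemma is a consequence of the Garsia--Rodemich--Rumsey inequality for Banach-space-valued paths, referring to \cite{GR70} and \cite[Theorem A.1]{FK10}. You have supplied precisely this standard argument, with the correct choice of $\Psi(u)=u^m$, $p(u)=u^\alpha$, and the observation that the admissible window $\alpha\in(2/m,\eta+1/m)$ is nonempty exactly when $m\eta>1$.
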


To prove Propositions \ref{supztp} and \ref{supzt1}, we shall apply Lemma \ref{sup} to Banach spaces $E=L^p(\R^d)$ and $E=L^1(\R^d)$.

\subsection{Proof of Proposition \ref{supztp}}\label{sectiona3}

We consider the Banach space $E=L^p(\R^d)$ in Lemma \ref{sup}; recall the expression \eqref{mollifiedzt} of $Z_t^N$, for $p>2$ and for any $s,t\in[0,T]$,
\begin{equation}\label{m1m2p}
	\begin{aligned}
		&\,\big\|Z_t^N-Z^N_s \big\|_{L^m\left(\Omega,L^p\right)}\\
		\leq&\, \bigg\|\sum_{k} \int_{s}^{t} \nabla\cdot e^{(t-\tau) A}\left( V^{N} \ast \left(\sigma_k^N S_{\tau}^{N}\right)\right)  \d W_{\tau}^{k} \bigg\|_{L^m\left(\Omega,L^p\right)}\\
		&\,+\bigg\|\sum_{k} \int_{0}^{s} \nabla\cdot e^{(s-\tau) A}\left( e^{(t-s)A}\left(V^{N} \ast \left(\sigma_k^N S_{\tau}^{N}\right)\right)-V^{N} \ast \left(\sigma_k^N S_{\tau}^{N}\right)\right)  \d W_{\tau}^{k} \bigg\|_{L^m\left(\Omega,L^p\right)}\\
		=:&\,M_1+M_2.
	\end{aligned}
\end{equation}	
\noindent In the sequel, we will estimate $M_1$ and $M_2$ respectively.

\smallskip
\noindent\textbf{Step 1. The bound of $M_1$ in \eqref{m1m2p}}. \smallskip

It is easy to show that
\begin{equation}\label{M-1.1}
	e^{(t-\tau) A}\left( V^{N} \ast \left(\sigma_k^N S_{\tau}^{N}\right)\right)(\cdot) =\frac{1}{N}\sum_{i=1}^{N} \sigma_{k}^{N}(X_{\tau}^{N,i}) \big(e^{(t-\tau) A} V^{N}\big) \left(\cdot-X_{\tau}^{N,i}\right),
\end{equation}
thus,
$$
\begin{aligned}
	M_1=
	&\,\bigg\|\sum_{k} \int_{s}^{t} \frac{1}{N}\sum_{i=1}^{N}\nabla\cdot \Big[\sigma_{k}^{N}(X_{\tau}^{N,i})\big(e^{(t-\tau) A} V^{N}\big) \left(\cdot-X_{\tau}^{N,i}\right) \Big]\d W_{\tau}^{k} \bigg\|_{L^m\left(\Omega,L^p\right)}\\
	=&\, \bigg\|\sum_{k} \int_{s}^{t}\frac{1}{N} \sum_{i=1}^{N}\sigma_{k}^{N}(X_{\tau}^{N,i})\cdot  \big( \nabla e^{(t-\tau) A} V^{N}\big)\left(\cdot-X_{\tau}^{N,i}\right) \d W_{\tau}^{k} \bigg\|_{L^{m}(\Omega,L^{p})}.\\
\end{aligned}
$$
Then applying Sobolev embedding $H^{d(\frac{1}{2}-\frac{1}{p}),2}(\R^d)\subset L^p(\R^d)$, we have
{\small \begin{equation*}
		\begin{aligned}
			M_1^m
			\leq&\, C \E\bigg\|\sum_{k} \int_{s}^{t}\frac{1}{N} \sum_{i=1}^{N}\sigma_{k}^{N}(X_{\tau}^{N,i})\cdot \! \Big((\mathrm{I}-A)^{\frac{d}{2}(\frac{1}{2}-\frac{1}{p})}   \nabla e^{(t-\tau) A} V^{N}\Big)\! \big(\cdot-X_{\tau}^{N,i}\big) \d W_{\tau}^{k} \bigg\|_{L^2}^m\\
			\leq&\, C\E\Bigg[ \sum_{k}\int_{s}^{t} \bigg\|\frac{1}{N} \sum_{i=1}^{N} \sigma_{k}^{N}(X_{\tau}^{N,i}) \cdot \! \Big((\mathrm{I}-A)^{\frac{d}{2}(\frac{1}{2}-\frac{1}{p})}  \nabla e^{(t-\tau) A} V^{N}\Big)\! \big(\cdot-X_{\tau}^{N,i}\big) \bigg\|_{L^{2}}^{2} \d \tau \Bigg]^{\frac{m}{2}}.\\
		\end{aligned}
\end{equation*} }

\noindent Define the operator
\begin{equation}\label{operatorlt}
	\mathcal{L}_{t,\tau}:=(\mathrm{I}-A)^{\frac{d}{2}(\frac{1}{2}-\frac{1}{p})}  \nabla e^{(t-\tau) A}, \quad t>\tau\geq 0;
\end{equation}
by \eqref{defQN}, we have
\begin{equation*}
	\begin{aligned}
		&\,\sum_{k} \bigg\|\frac{1}{N} \sum_{i=1}^{N}\sigma_{k}^{N}(X_{\tau}^{N,i})\cdot\mathcal{L}_{t,\tau}
		V^{N} \big(\cdot-X_{\tau}^{N,i}\big) \bigg\|_{L^{2}}^{2}\\
		=&\,\sum_{k}\! \int_{\R^d}\! \frac{1}{N^2}\!\! \sum_{i,j=1}^{N} \! \big[\sigma_{k}^{N}(X_{\tau}^{N,i}) \cdot\mathcal{L}_{t,\tau}
		V^{N}\! \big(x-X_{\tau}^{N,i}\big)\big] \big[\sigma_{k}^{N}(X_{\tau}^{N,j})\cdot\mathcal{L}_{t,\tau}
		V^{N}\! \big(x-X_{\tau}^{N,j}\big) \big]\d x\\
		\leq&\, \frac{1}{N^2}\sum_{i,j=1}^{N}\left|Q_N(X_{\tau}^{N,i}-X_{\tau}^{N,j})\right| \int_{\R^d}\left| \mathcal{L}_{t,\tau} V^{N} \big(x-X_{\tau}^{N,i}\big)\right|\left|\mathcal{L}_{t,\tau}
		V^{N} \big(x-X_{\tau}^{N,j}\big) \right|\d x,
	\end{aligned}
\end{equation*}
which, by Cauchy-Schwarz inequality, is dominated by
$$
\frac{1}{N^2}\sum_{i,j=1}^{N}\left|Q_N(X_{\tau}^{N,i}-X_{\tau}^{N,j})\right|\left\|\mathcal{L}_{t,\tau}
V^{N}\right\|^{2}_{L^2}.
$$
As a result,
\begin{equation}\label{ELL}
	\begin{aligned}
		\xi_N:=	&\,\sum_{k}\int_{s}^{t} \bigg\|\frac{1}{N} \sum_{i=1}^{N}\sigma_{k}^{N}(X_{\tau}^{N,i})\cdot\mathcal{L}_{t,\tau}
		V^{N}\left(\cdot-X_{\tau}^{N,i}\right) \bigg\|_{L^{2}}^{2} \d \tau\\
		\leq&\,  \frac{1}{N^2}\sum_{i,j=1}^{N}\int_{s}^{t}\left|Q_N(X_{\tau}^{N,i}-X_{\tau}^{N,j})\right|\left\|\mathcal{L}_{t,\tau}
		V^{N}\right\|^{2}_{L^2}\d \tau\\
		\leq&\,\frac{1}{N^2}\sum_{i,j=1}^{N}\left(\int_{s}^{t}\left|Q_N(X_{\tau}^{N,i}-X_{\tau}^{N,j})\right|^m\d \tau\right)^{\frac{1}{m}}\left(\int_{s}^{t}\left\|\mathcal{L}_{t,\tau}
		V^{N}\right\|^{2m^{\prime}}_{L^2}\d \tau\right)^{\frac{1}{m^{\prime}}},
	\end{aligned}
\end{equation}
where $m^{\prime}$ denotes the conjugate number of $m$. Then, Jensen's inequality yields
\begin{equation}\label{EQL}
	\begin{aligned}
		M_1^m\leq&\,	C\E\left[\xi_N\right]^{\frac{m}{2}}\\ \leq&\,C\E\Bigg[\frac{1}{N^2}\sum_{i,j=1}^{N}\left(\int_{s}^{t}\left|Q_N(X_{\tau}^{N,i}-X_{\tau}^{N,j})\right|^m\d \tau\right)^{\frac{1}{m}} \Bigg]^{\frac{m}{2}}\left(\int_{s}^{t}\left\|\mathcal{L}_{t,\tau}
		V^{N}\right\|^{2m^{\prime}}_{L^2}\d \tau\right)^{\frac{m}{2m^{\prime}}}\\
		\leq&\,\frac{C}{N^2}\sum_{i,j=1}^{N}\E\left[\int_{s}^{t}\left|Q_N(X_{\tau}^{N,i}-X_{\tau}^{N,j})\right|^m\d \tau\right]^{\frac{1}{2}}\left(\int_{s}^{t}\left\|\mathcal{L}_{t,\tau}
		V^{N}\right\|^{2m^{\prime}}_{L^2}\d \tau\right)^{\frac{m}{2m^{\prime}}}.\\	
	\end{aligned}
\end{equation}

For $\delta<1$ we have
$$\aligned
\left\|\mathcal{L}_{t,\tau} V^{N}\right\|_{L^2} =&\,\left\|(\mathrm{I}-A)^{\frac{d}{2}(\frac{1}{2}-\frac{1}{p})} \nabla e^{(t-\tau) A} V^{N}\right\|_{L^{2}}\\
=&\,\left\|(\mathrm{I}-A)^{-\frac{\delta}{2}} \nabla e^{(t-\tau) A}(\mathrm{I}-A)^{\frac{\delta}{2}+\frac{d}{2}(\frac{1}{2}-\frac{1}{p})} V^{N}\right\|_{L^{2}};
\endaligned $$
then by \eqref{lpestimate},
\begin{equation}\label{estimatelst}
	\begin{aligned}
		\left\|\mathcal{L}_{t,\tau} V^{N}\right\|_{L^2} \leq \frac{C_{\delta,\nu,T}}{(t-\tau)^{\frac{1-\delta}{2}}} \left\|V^{N}\right\|_{\delta+d(\frac{1}{2}-\frac{1}{p}), 2} \leq  \frac{C_{\delta,\nu,T}}{(t-\tau)^{\frac{1-\delta}{2}}} N^{\beta \big(d+\delta-\frac{d}{p} \big)}.
	\end{aligned}
\end{equation}
By Hypotheses \ref{hypothesis}, we have $\frac{2}{m}<\delta<1$ which implies $m^{\prime}(1-\delta)<1$; consequently,
\begin{equation}\label{lst}
	\begin{aligned}
		\left(\int_{s}^{t}\left\|\mathcal{L}_{t,\tau} V^{N}\right\|^{2m^{\prime}}_{L^2} \d \tau\right)^{\frac{1}{2m^{\prime}}}
		\leq&\, C_{\delta,\nu,T}N^{\beta \big(d+\delta-\frac{d}{p} \big)} \left(\int_{s}^{t} \frac{1}{(t-\tau)^{m^{\prime}(1-\delta)}} \d \tau\right)^{\frac{1}{2m^{\prime}}}\\
		\leq&\, C_{\delta,\nu,T} N^{\beta\big(d+\delta-\frac{d}{p}\big)} \left(t-s\right)^{\frac{1}{2}\left(\delta-\frac{1}{m}\right)}.
	\end{aligned}
\end{equation}
Applying Lemma \ref{convratelemma}, we obtain
\begin{equation}\label{NEQ}
	\frac{1}{N^2}\sum_{i,j=1}^{N}\left(\E\left[\int_{s}^{t}\left|Q_N(X_{\tau}^{N,i}-X_{\tau}^{N,j})\right|^m\d \tau\right]\right)^{\frac{1}{2}}\leq C\left(t-s\right)^{\frac{1}{2}}\left\|K_{\epsilon}\right\|_{\infty}^{d}N^{-\frac{1}{4}}.
\end{equation}
Substituting \eqref{lst} and \eqref{NEQ} into \eqref{EQL} yields that
\begin{equation}\label{step1p}
	M_1\leq  C_{\delta}\left\|K_{\epsilon}\right\|_{\infty}^{\frac{d}{m}}N^{\beta\big(d+\delta-\frac{d}{p}\big) -\frac{1}{4m}} (t-s )^{\frac{\delta}{2}}.
\end{equation}

\smallskip

\noindent\textbf{Step 2. The bound of $M_2$ in \eqref{m1m2p}}. \smallskip

Similarly to \eqref{M-1.1}, and using Sobolev embedding theorem, we have
$$\aligned
&\, \bigg\|\sum_{k} \int_{0}^{s} \nabla\cdot e^{(s-\tau) A}\left( e^{(t-s)A}\left(V^{N} \ast \left(\sigma_k^N S_{\tau}^{N}\right)\right)-V^{N} \ast \left(\sigma_k^N S_{\tau}^{N}\right)\right)  \d W_{\tau}^{k} \bigg\|_{L^p} \\
=&\, \bigg\|\sum_{k} \int_{0}^{s}\frac{1}{N} \sum_{i=1}^{N}\sigma_{k}^{N}(X_{\tau}^{N,i})\cdot   \Big[\nabla e^{(s-\tau) A} \big(e^{(t-s)A}-\mathrm{I}\big)V^{N} \Big] \big(\cdot-X_{\tau}^{N,i} \big)\, \d W_{\tau}^{k} \bigg\|_{L^p} \\
\leq &\, C \bigg\|\sum_{k} \int_{0}^{s}\frac{1}{N} \sum_{i=1}^{N}\sigma_{k}^{N}(X_{\tau}^{N,i})\cdot  \big[\mathcal{L}_{s,\tau} \big(e^{(t-s)A}-\mathrm{I}\big)V^{N} \big]\big(\cdot-X_{\tau}^{N,i} \big)\, \d W_{\tau}^{k} \bigg\|_{L^2},
\endaligned$$
where the operator $\mathcal{L}_{s,\tau}$ is defined in \eqref{operatorlt}. Hence,
\begin{equation*}
	\begin{aligned}
		M_2^m\leq&\, C_m \E \bigg\|\sum_{k} \int_{0}^{s}\frac{1}{N} \sum_{i=1}^{N}\sigma_{k}^{N}(X_{\tau}^{N,i})\cdot  \mathcal{L}_{s,\tau} \big(e^{(t-s)A}-\mathrm{I}\big)V^{N}\left(\cdot-X_{\tau}^{N,i}\right) \d W_{\tau}^{k} \bigg\|_{L^2}^m \\
		\leq&\, C'_m \E\Bigg[ \sum_{k}\int_{0}^{s} \bigg\|\frac{1}{N} \sum_{i=1}^{N}\sigma_{k}^{N}(X_{\tau}^{N,i})\cdot\mathcal{L}_{s,\tau} \big(e^{(t-s)A}-\mathrm{I}\big)V^{N}\left(\cdot-X_{\tau}^{N,i}\right) \bigg\|_{L^{2}}^{2} \d \tau \Bigg]^{\frac{m}{2}}.
	\end{aligned}
\end{equation*}
Following the ideas in the treatments of \eqref{ELL} and \eqref{EQL}, we replace $V^N$ by $\big(e^{(t-s)A}-\mathrm{I}\big)V^{N}$, integrate from $0$ to $s$ and obtain
\begin{equation*}
	\begin{aligned}
		&\,\E\Bigg[\sum_{k}\int_{0}^{s} \bigg\| \frac{1}{N} \sum_{i=1}^{N}\sigma_{k}^{N}(X_{\tau}^{N,i})\cdot\mathcal{L}_{s,\tau}
		\big(e^{(t-s)A}-\mathrm{I}\big)V^{N}\left(\cdot-X_{\tau}^{N,i}\right) \bigg\|_{L^{2}}^{2} \d \tau \Bigg]^{\frac{m}{2}}\\
		\leq&\,\E\Bigg[\frac{1}{N^2}\! \sum_{i,j=1}^{N}\! \left(\int_{0}^{s}\! \left|Q_N(X_{\tau}^{N,i}-X_{\tau}^{N,j})\right|^m\d \tau\! \right)^{\!\! \frac{1}{m}} \!\! \Bigg]^{\!\frac{m}{2}} \! \bigg[\int_{0}^{s}\! \big\|\mathcal{L}_{s,\tau} \big(e^{(t-s)A}-\mathrm{I}\big)V^{N} \big\|^{2m^{\prime}}_{L^2}\! \d \tau \bigg]^{\! \frac{m}{2m^{\prime}}}.
	\end{aligned}
\end{equation*}
Similarly to the estimate on $M_1$, it yields
\begin{equation*}
	\begin{aligned}
		M_2
		\leq&\,C\Bigg\{\frac{1}{N^2}\sum_{i,j=1}^{N} \E\left[\int_{0}^{s}\left|Q_N(X_{\tau}^{N,i}-X_{\tau}^{N,j})\right|^m\d \tau\right]^{\frac{1}{2}}\Bigg\}^{\frac{1}{m}} \\
		&\, \times \bigg[\int_{0}^{s} \big\|\mathcal{L}_{s,\tau} \big(e^{(t-s)A}-\mathrm{I}\big)V^{N} \big\|^{2m^{\prime}}_{L^2} \d \tau \bigg]^{\frac{1}{2m^{\prime}}}\\
		\leq&\, C\left\|K_{\epsilon}\right\|_{\infty}^{\frac{d}{m}}N^{-\frac{1}{4m}} \bigg[\int_{0}^{s} \big\|\mathcal{L}_{s,\tau} \big(e^{(t-s)A}-\mathrm{I}\big)V^{N} \big\|^{2m^{\prime}}_{L^2}\d \tau \bigg]^{\frac{1}{2m^{\prime}}}.
	\end{aligned}
\end{equation*}

To get the same term $\left(t-s\right)^{\frac{\delta}{2}}$ as in the bound of $M_1$, we estimate $M_2$ in two ways. For $f\in H^1(\R^d)$, it holds
\begin{equation}\label{etsestimate1}
	\big\|\big(e^{(t-s)A}-\mathrm{I}\big)f \big\|_{L^2}\leq\left\|\nabla f\right\|_{L^2}\sqrt{t-s}.
\end{equation}
Recalling the definition of $\mathcal{L}_{s,\tau}$ in \eqref{operatorlt}, we have
\begin{equation*}
	\begin{aligned}
		\big\|\mathcal{L}_{s,\tau} \big(e^{(t-s)A}-\mathrm{I}\big)V^{N}\big\|_{L^2} =&\,\left\|(\mathrm{I}-A)^{\frac{d}{2}(\frac{1}{2}-\frac{1}{p})} \nabla e^{(s-\tau) A} 	\big(e^{(t-s)A}-\mathrm{I}\big)V^{N}\right\|_{L^{2}}\\
		=&\,\left\|(\mathrm{I}-A)^{-\frac{\delta}{2}} \nabla e^{(s-\tau) A}	\big(e^{(t-s)A}-\mathrm{I}\big)(\mathrm{I}-A)^{\frac{\delta}{2}+\frac{d}{2}(\frac{1}{2}-\frac{1}{p})} V^{N}\right\|_{L^{2}}.\\
	\end{aligned}
\end{equation*}
Taking $f=(\mathrm{I}-A)^{\frac{\delta}{2}+\frac{d}{2}(\frac{1}{2}-\frac{1}{p})} V^{N}$, by \eqref{etsestimate1},
\begin{equation*}
	\begin{aligned}
		&\, \big\|\mathcal{L}_{s,\tau} \big(e^{(t-s)A}-\mathrm{I}\big)V^{N} \big\|_{L^2} \\
		\leq&\,\big\|(\mathrm{I}-A)^{-\frac{\delta}{2}} \nabla e^{(s-\tau) A} \big\|_{L^{2}\rightarrow L^{2}} \left\| \nabla(\mathrm{I}-A)^{\frac{\delta}{2} +\frac{d}{2}(\frac{1}{2}-\frac{1}{p})} V^{N}\right\|_{L^{2}}\sqrt{t-s}\\
		\leq&\, \frac{C_{\delta,\nu,T}\sqrt{t-s}}{(s-\tau)^{\frac{1-\delta}{2}}}\left\|V^{N}\right\|_{1+\delta+d(\frac{1}{2}-\frac{1}{p}), 2} \leq \frac{C_{\delta,\nu,T}\sqrt{t-s}}{(s-\tau)^{\frac{1-\delta}{2}}} N^{\beta\big(d+1+\delta-\frac{d}{p}\big)}.
	\end{aligned}
\end{equation*}
Then,
\begin{equation}\label{M21}
	\begin{aligned}
		M_2\leq  C_{\delta,\nu,T}\left\|K_{\epsilon}\right\|_{\infty}^{\frac{d}{m}} N^{\beta\big(d+1+\delta-\frac{d}{p}\big) -\frac{1}{4m}}\sqrt{t-s}.
	\end{aligned}
\end{equation}
If we use the estimate
\begin{equation}\label{etsestimate2}
	\big\|\big(e^{(t-s)A}-\mathrm{I}\big)f \big\|_{L^2} \leq 2 \| f \|_{L^2}
\end{equation}
rather than \eqref{etsestimate1}, still taking $f=(\mathrm{I}-A)^{\frac{\delta}{2}+\frac{d}{2}(\frac{1}{2}-\frac{1}{p})} V^{N}$, then
\eqref{estimatelst} implies that
\begin{equation*}
	\begin{aligned}
		\big\|\mathcal{L}_{s,\tau} 	\big(e^{(t-s)A}-\mathrm{I}\big)V^{N} \big\|_{L^2} \leq&\,2\left\|(\mathrm{I}-A)^{-\frac{\delta}{2}} \nabla e^{(s-\tau) A}	\right\|_{L^{2}\rightarrow L^{2}}\left\|(\mathrm{I}-A)^{\frac{\delta}{2}+\frac{d}{2}(\frac{1}{2}-\frac{1}{p})} V^{N}\right\|_{L^{2}} \\ \leq&\, \frac{C_{\delta,\nu,T}}{(s-\tau)^{\frac{1-\delta}{2}}} N^{\beta\big(d+\delta-\frac{d}{p}\big)}.
	\end{aligned}
\end{equation*}
Thus,
\begin{equation}\label{M22}
	M_2\leq  C_{\delta,\nu,T}\left\|K_{\epsilon}\right\|_{\infty}^{\frac{d}{m}} N^{\beta\big(d+\delta-\frac{d}{p}\big)-\frac{1}{4m}}.
\end{equation}
Combining \eqref{M21} and \eqref{M22}, we deduce by interpolation $M_2=M_2^{\delta}M_2^{1-\delta}$ that
\begin{equation}\label{step2p}
	M_2\leq C_{\delta}\left\|K_{\epsilon}\right\|_{\infty}^{\frac{d}{m}} N^{\beta\big(d +2\delta-\frac{d}{p}\big) -\frac{1}{4m}} (t-s )^{\frac{\delta}{2}}.
\end{equation}

\smallskip

\noindent\textbf{Step 3. The bound of \eqref{m1m2p}}. \smallskip

Substituting \eqref{step1p} and \eqref{step2p} into  \eqref{m1m2p}, we obtain
\begin{equation*}
	\begin{aligned}
		\left\|Z_t^N-Z^N_s\right\|_{L^m\left(\Omega,L^p\right)}
		\leq &\,C_{\delta}\left\|K_{\epsilon}\right\|_{\infty}^{\frac{d}{m}} \Big( N^{\beta \big(d+\delta-\frac{d}{p} \big) -\frac{1}{4m}} +N^{\beta\big(d +2\delta-\frac{d}{p}\big) -\frac{1}{4m}}\Big) \left(t-s\right)^{\frac{\delta}{2}}\\
		\leq &\,C_{\delta} \left\|K_{\epsilon}\right\|_{\infty}^{\frac{d}{m}} N^{\beta\big(d +2\delta-\frac{d}{p} \big) -\frac{1}{4m}}\left(t-s\right)^{\frac{\delta}{2}}.
	\end{aligned}
\end{equation*}
Applying Lemma \ref{sup} with $\eta=\frac{\delta}{2}$ which satisfies $m\eta>1$, we deduce that
\begin{equation*}
	\bigg\|\sup_{t \in[0, T]}\big\| Z^N_t \big\|_{L^{p}\left(\R^{d}\right)} \bigg\|_{L^m(\Omega)}\leq C_{m,\delta,T}\left\|K_{\epsilon}\right\|_{\infty}^{\frac{d}{m}} N^{\beta\big(d +2\delta-\frac{d}{p}\big)-\frac{1}{4m}},
\end{equation*}
where $\frac{2}{m}<\delta<1$ and  $\beta$ satisfies $$0<\beta\leq\frac{1}{4m(d+2)}<\frac{1}{4m(d+2\delta-\frac{d}{p})}.$$

\subsection{Proof of Proposition \ref{supzt1}}\label{sectiona4}

Here we consider the Banach space $E=L^1(\R^d)$ in Lemma \ref{sup}; similarly to the proof of Proposition \ref{supztp}, for any $s,t\in[0,T]$,
\begin{equation}\label{m1m21}
	\begin{aligned}
		&\,\big\|Z_t^N-Z^N_s \big\|_{L^m\left(\Omega,L^1\right)}\\ \leq&\, \bigg\|\sum_{k} \int_{s}^{t} \nabla\cdot e^{(t-\tau) A}\left( V^{N} \ast \left(\sigma_k^N S_{\tau}^{N}\right)\right)  \d W_{\tau}^{k} \bigg\|_{L^m\left(\Omega,L^1\right)}\\
		&\,+ \bigg\|\sum_{k} \int_{0}^{s} \nabla\cdot e^{(s-\tau) A}\left( e^{(t-s)A}\left(V^{N} \ast \left(\sigma_k^N S_{\tau}^{N}\right)\right)-V^{N} \ast \left(\sigma_k^N S_{\tau}^{N}\right)\right)  \d W_{\tau}^{k} \bigg\|_{L^m\left(\Omega,L^1\right)}\\
		=:&\,\tilde{M}_1 + \tilde M_2.
	\end{aligned}
\end{equation}		

\smallskip
\noindent\textbf{Step 1. The bound of $\tilde M_1$ in \eqref{m1m21}}. \smallskip

Since $L^1(\R^d)$ is not a UMD Banach space, we will follow the idea in \cite{ORT20b} and multiply the functions by the weight $w(x):= 1+|x|^{\frac{d+1}{2}},\, x\in \R^d$:
$$\aligned
\tilde M_1^m\leq&\, \E \bigg\|\sum_{k} \int_{s}^{t}\frac{1}{N} \sum_{i=1}^{N}\sigma_{k}^{N}(X_{\tau}^{N,i})\cdot   \big(\nabla e^{(t-\tau) A} V^{N}\big)\big(\cdot-X_{\tau}^{N,i} \big) \d W_{\tau}^{k} \bigg\|_{L^1}^m\\
\leq&\, \E\Bigg\|\frac{1}{w(\cdot)} \sum_{k} \int_{s}^{t}\frac{1}{N} \sum_{i=1}^{N}\sigma_{k}^{N}(X_{\tau}^{N,i})\cdot  w(\cdot) \big(\nabla e^{(t-\tau) A} V^{N}\big)\big(\cdot-X_{\tau}^{N,i}\big) \d W_{\tau}^{k} \Bigg\|_{L^1}^m .
\endaligned $$
Applying Cauchy-Schwarz inequality, we have
\begin{equation}\label{m1L11}
	\begin{aligned}
		\tilde M_1^m \leq&\, C_d\, \E \bigg\|\sum_{k} \int_{s}^{t}\frac{1}{N} \sum_{i=1}^{N}\sigma_{k}^{N}(X_{\tau}^{N,i})\cdot  w(\cdot) \big(\nabla e^{(t-\tau) A} V^{N}\big)\left(\cdot-X_{\tau}^{N,i}\right) \d W_{\tau}^{k} \bigg\|_{L^2}^m \\
		\leq&\,  C_d\, \E\Bigg[\! \sum_{k}\!\int_{s}^{t}\! \bigg\|\frac{1}{N} \sum_{i=1}^{N} \sigma_{k}^{N}(X_{\tau}^{N,i}) \cdot w(\cdot)  \big(\nabla e^{(t-\tau) A} V^{N}\big)\big(\cdot-X_{\tau}^{N,i}\big) \bigg\|_{L^{2}}^{2} \d \tau\Bigg]^{\frac{m}{2}}.\\
	\end{aligned}
\end{equation}
For notational simplicity, we denote $f_{\tau}(x, X_{\tau}^{N,i} ):= w(x) \nabla e^{(t-\tau) A} V^{N}\big(x-X_{\tau}^{N,i}\big)$, $x\in\R^d$; then,
\begin{equation*}
	\begin{aligned}
		\tilde{\xi}_N:=&\,\sum_{k}\int_{s}^{t} \bigg\|\frac{1}{N} \sum_{i=1}^{N}\sigma_{k}^{N}(X_{\tau}^{N,i})\cdot f_{\tau}(\cdot, X_{\tau}^{N,i} ) \bigg\|_{L^{2}}^{2} \d \tau\\
		=&\,\sum_{k}\int_{s}^{t}\int_{\R^d}\frac{1}{N^2}\sum_{i,j=1}^{N}\left(\sigma_{k}^{N}(X_{\tau}^{N,i})\cdot f_{\tau}(x, X_{\tau}^{N,i} )\right)\left(\sigma_{k}^{N}(X_{\tau}^{N,j})\cdot f_{\tau}(x, X_{\tau}^{N,j} )\right)\d x\d \tau\\
		\leq&\,\frac{1}{N^2}\sum_{i,j=1}^{N}\int_{s}^{t}\left|Q_N(X_{\tau}^{N,i}-X_{\tau}^{N,j})\right|\left(\int_{\R^d}\left|f_{\tau}(x, X_{\tau}^{N,i} )\right|\left|f_{\tau}(x, X_{\tau}^{N,j} )\right|\d x\right)\d \tau ,
	\end{aligned}
\end{equation*}
where in the last step we have used the identity \eqref{defQN}. H\"older's inequality leads to
$$\aligned \tilde{\xi}_N \leq \frac{1}{N^2}\sum_{i,j=1}^{N}& \left[\int_{s}^{t}\left|Q_N(X_{\tau}^{N,i}-X_{\tau}^{N,j})\right|^m\d \tau\right]^{\frac{1}{m}} \\
& \times \bigg[ \int_{s}^{t}\left(\int_{\R^d}\left|f_{\tau}(x, X_{\tau}^{N,i} )\right|\left|f_{\tau}(x, X_{\tau}^{N,j} )\right|\d x\right)^{m^{\prime}}\d \tau \bigg]^{\frac{1}{m^{\prime}}}.
\endaligned $$
Applying Jensen's inequality, we obtain
$$\begin{aligned}
	\tilde{\xi}_N^{\, m/2} \leq \frac{1}{N^2}\sum_{i,j=1}^{N}
&\left[\int_{s}^{t}\left|Q_N(X_{\tau}^{N,i}-X_{\tau}^{N,j})\right|^m\d \tau\right]^{\frac{1}{2}} \\
&\times\bigg[ \int_{s}^{t}\left(\int_{\R^d}\left|f_{\tau}(x, X_{\tau}^{N,i} )\right| \left|f_{\tau}(x, X_{\tau}^{N,j}) \right| \d x\right)^{m^{\prime}}\d \tau \bigg]^{\frac{m}{2m^{\prime}}} .
\end{aligned}$$
Then, by Cauchy-Schwarz inequality,
\begin{equation*}
	\begin{aligned}
		\E\Big[\tilde{\xi}_N^{\, m/2} \Big]	
		\leq&\,\frac{1}{N^2}\sum_{i,j=1}^{N}\left[\E \int_{s}^{t}\left|Q_N(X_{\tau}^{N,i}-X_{\tau}^{N,j})\right|^{m}\d \tau \right]^{\frac{1}{2}}\\ &\qquad\qquad\times \left\{\E \bigg[\int_{s}^{t}\left(\int_{\R^d}\left|f_{\tau}(x, X_{\tau}^{N,i} )\right|\left|f_{\tau}(x, X_{\tau}^{N,j} )\right|\d x\right)^{m^{\prime}}\d \tau \bigg]^{\frac{m}{m^{\prime}}} \right\}^{\frac{1}{2}}.\\
	\end{aligned}
\end{equation*}
Lemma \ref{convratelemma} implies that
{\small \begin{equation}\label{EQL12}
		\begin{aligned}
			\E\Big[\tilde{\xi}_N^{\, m/2} \Big]	\leq \frac{C \|K_{\epsilon} \|_{\infty}^{d} (t-s )^{\frac{1}{2}}}{N^{\frac{1}{4}+2}}\! \sum_{i,j=1}^{N}\! \left\{ \E\bigg[ \int_{s}^{t}\!\! \bigg(\int_{\R^d}\! \big|f_{\tau}(x, X_{\tau}^{N,i} )\big|\, \big|f_{\tau}(x, X_{\tau}^{N,j} )\big|\d x \bigg)^{\! m^{\prime}} \! \d \tau \bigg]^{\! \frac{m}{m^{\prime}}}\! \right\}^{\! \frac{1}{2}}.\\
		\end{aligned}
\end{equation} }

\noindent To bound \eqref{EQL12}, we observe
\begin{equation}\label{i1}
	\begin{aligned}
		&\,\E\left[\int_{s}^{t}\left(\int_{\R^d}\left|f_{\tau}(x, X_{\tau}^{N,i} )\right|\left|f_{\tau}(x, X_{\tau}^{N,j} )\right|\d x\right)^{m^{\prime}}\d \tau\right]^{\frac{m}{m^{\prime}}}\\
		\leq&\,C\E\left\|\int_{\R^d}\left(\left|f_{\tau}(x, X_{\tau}^{N,i} )\right|^2+\left|f_{\tau}(x, X_{\tau}^{N,j} )\right|^2\right)\d x \right\|^{m}_{L^{m^{\prime}}([s,t])}\\
		\leq&\,C\E\left\|\int_{\R^d}\left|f_{\tau}(x, X_{\tau}^{N,i} )\right|^2\d x \right\|^{m}_{L^{m^{\prime}}([s,t])},
	\end{aligned}
\end{equation}
where the last inequality is because the processes $X_{\cdot}^{N,i}$ and $X_{\cdot}^{N,j}$ have the same law. Using the fact
$$1+|a+b|^{d+1}\leq C_d(1+|a|^{d+1})(1+|b|^{d+1})$$
and letting $y=x-X_{\tau}^{N,i}$, one has
\begin{equation*}
	\begin{aligned}
		\int_{\R^d}\left|f_{\tau}(x, X_{\tau}^{N,i} )\right|^2\d x&\,=\int_{\R^d}\big(1+|x|^{\frac{d+1}{2}} \big)^2 \big|\nabla e^{(t-\tau) A} V^{N}\big(x-X_{\tau}^{N,i}\big)\big|^2\d x\\
		&\,\leq C\int_{\R^d} \big(1+|y+X_{\tau}^{N,i}|^{d+1} \big) \big|\nabla e^{(t-\tau) A} V^{N} (y ) \big|^2\d y\\
		&\,\leq C\big(1+|X_{\tau}^{N,i}|^{d+1} \big)\int_{\R^d}\big(1+|y|^{d+1}\big) \big|\nabla e^{(t-\tau) A} V^{N} (y )\big|^2\d y.\\
	\end{aligned}
\end{equation*}
Therefore,
\begin{equation}\label{Efm}
	\begin{aligned}	
		&\,\E\left\|\int_{\R^d}\left|f_{\tau}(x, X_{\tau}^{N,i} )\right|^2\d x \right\|^{m}_{L^{m^{\prime}}([s,t])}
		\\
		\leq&\,C\E\left\|\big(1+|X_{\tau}^{N,i}|^{d+1} \big)\int_{\R^d}\big(1+|y|^{d+1}\big) \big|\nabla e^{(t-\tau) A} V^{N} (y )\big|^2\d y \right\|^{m}_{L^{m^{\prime}}([s,t])} \\
		\leq&\,C\E\bigg[\sup_{\tau \in[s, t]} \big(1+|X_{\tau}^{N,i}|^{m(d+1)} \big)\bigg] \left\|\int_{\R^d}\big(1+|y|^{d+1}\big) \big| \nabla e^{(t-\tau) A} V^{N} (y ) \big|^{2}\d y\right\|^{m}_{L^{m^{\prime}}([s,t])} \\
		\leq &\, C\left\|K_{\epsilon}\right\|_{\infty}^{m(d+1)} \left\|\int_{\R^d}\big(1+|y|^{d+1}\big) \big| \nabla e^{(t-\tau) A} V^{N} (y ) \big|^{2}\d y\right\|^{m}_{L^{m^{\prime}}([s,t])},
	\end{aligned}
\end{equation}
where in the last step we have used Lemma \ref{1stmoment} for any small enough $\eps>0$. Combining this estimate with  \eqref{EQL12} and \eqref{i1}, we obtain
$$\E\Big[\tilde{\xi}_N^{\, m/2} \Big] \leq \frac{C \|K_{\epsilon} \|_{\infty}^{d+ \frac{m(d+1)}2}} {N^{\frac14}} (t-s )^{\frac{1}{2}} \left\|\int_{\R^d}\big(1+|y|^{d+1}\big) \big| \nabla e^{(t-\tau) A} V^{N} (y ) \big|^{2}\d y\right\|^{\frac {m}2}_{L^{m^{\prime}}([s,t])} . $$
Recall the definition of $\tilde{\xi}_N$; inserting this estimate into \eqref{m1L11} gives us
\begin{equation*}
	\begin{aligned}
		\tilde M_1 \leq&\, \frac{C\left\|K_{\epsilon}\right\|_{\infty}^{\frac{d}{m}+\frac{d+1}{2}}}{N^{\frac{1}{4m}}} (t-s )^{\frac{1}{2m}} \left\|\int_{\R^d}\big(1+|y|^{d+1}\big) \big| \nabla e^{(t-\tau) A} V^{N} (y )\big|^{2} \d y\right\|^{\frac{1}{2}}_{L^{m^{\prime}}([s,t])}\\
		\leq&\, \frac{C\left\|K_{\epsilon}\right\|_{\infty}^{d+1}}{N^{\frac{1}{4m}}} (t-s)^{\frac{1}{2m}} \left\|\int_{\R^d} \big| \nabla e^{(t-\tau) A} V^{N} (y ) \big|^{2}\d y\right\|^{\frac{1}{2}}_{L^{m^{\prime}}([s,t])} \\ &\,+\frac{C\left\|K_{\epsilon}\right\|_{\infty}^{d+1} }{N^{\frac{1}{4m}}} (t-s)^{\frac{1}{2m}} \left\|\int_{\R^d}|y|^{d+1} \big| \nabla e^{(t-\tau) A} V^{N} (y ) \big|^{2}\d y \right\|^{\frac{1}{2}}_{L^{m^{\prime}}([s,t])}.
	\end{aligned}
\end{equation*}
We denote the two quantities by $J_1$ and $J_2$. By \eqref{lpestimate},
\begin{equation*}
	\begin{aligned}
		\big\| \nabla e^{(t-\tau) A} V^{N} \big\|_{L^{2}}
		=&\,\big\|(\mathrm{I}-A)^{-\frac{\delta}{2}} \nabla e^{(t-\tau) A}(\mathrm{I}-A)^{\frac{\delta}{2}} V^{N} \big\|_{L^{2}}
		\\\leq &\,  \frac{C_{\delta,\nu,T}}{(t-\tau)^{\frac{1-\delta}{2}}}\left\|V^{N}\right\|_{\delta, 2}
		\leq \frac{C_{\delta,\nu,T}}{(t-\tau)^{\frac{1-\delta}{2}}} N^{\beta\left(\frac{d}{2}+\delta\right)},
	\end{aligned}
\end{equation*}
the first term $J_1$ can be estimated as
\begin{equation}\label{M1J1}
	\begin{aligned}
		J_1\leq&\, C_{\delta}\left\|K_{\epsilon}\right\|_{\infty}^{d+1}\left(t-s\right)^{\frac{1}{2m}}N^{\beta\left(\frac{d}{2}+\delta\right)-\frac{1}{4m}}\left(\int_{s}^{t} \frac{1}{(t-\tau)^{m^{\prime}(1-\delta)}} \d \tau\right)^{\frac{1}{2m^{\prime}}}\\
		\leq&\, C_{\delta}\left\|K_{\epsilon}\right\|_{\infty}^{d+1} N^{\beta\left(\frac{d}{2}+\delta\right)-\frac{1}{4m}}\left(t-s\right)^{\frac{\delta}{2}}.
	\end{aligned}
\end{equation}
Now let us estimate the second term $J_2$. We will use the following result.

\begin{lemma}\label{y4}
	Assume Hypotheses \ref{hypothesis} and  $\frac{2}{m}<\delta<1$, then for any $N\in\N$,
	$$
	\int_{\R^d}|y|^{d+1}\big| \nabla e^{(t-\tau) A} V^{N}(y) \big|^{2}\d y\leq C_{\delta} N^{\beta(d+2\delta)}\frac{1}{(t-\tau)^{1-\delta}}.
	$$
\end{lemma}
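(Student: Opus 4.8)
The plan is to use the compact support of $V^{N}$ to split the integral into a region near the origin, where the weight $|y|^{d+1}$ is bounded and one can fall back on the $L^{2}$ smoothing estimate already used for $J_{1}$, and a far region, where the weight is large but the heat kernel is exponentially small. Write $h:=\nabla e^{(t-\tau)A}V^{N}=e^{(t-\tau)A}(\nabla V^{N})$, so that
$$h(y)=\int_{\R^{d}}g_{t-\tau}(y-z)\,\nabla V^{N}(z)\,\d z,$$
and observe that, since $V\in C_{c}^{\infty}$ and $\beta>0$, the function $\nabla V^{N}$ is supported in a fixed ball $B(R_{0})$ with $R_{0}$ independent of $N\geq1$, namely the radius of $\supp V$. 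Then I would write
$$\int_{\R^{d}}|y|^{d+1}|h(y)|^{2}\,\d y=\int_{B(2R_{0})}|y|^{d+1}|h(y)|^{2}\,\d y+\int_{B(2R_{0})^{c}}|y|^{d+1}|h(y)|^{2}\,\d y=:I_{1}+I_{2}.$$

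For $I_{1}$ the weight is at most $(2R_{0})^{d+1}$, so $I_{1}\leq(2R_{0})^{d+1}\|h\|_{L^{2}}^{2}$, and exactly as in the computation just before \eqref{M1J1} one has
$$\|h\|_{L^{2}}=\big\|\nabla e^{(t-\tau)A}V^{N}\big\|_{L^{2}}\leq\frac{C_{\delta,\nu,T}}{(t-\tau)^{\frac{1-\delta}{2}}}\,\|V^{N}\|_{\delta,2}\leq\frac{C_{\delta,\nu,T}}{(t-\tau)^{\frac{1-\delta}{2}}}\,N^{\beta(\frac{d}{2}+\delta)},$$
whence $I_{1}\leq C_{\delta,\nu,T}(t-\tau)^{-(1-\delta)}N^{\beta(d+2\delta)}$, which is already of the desired form.

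For $I_{2}$, if $|y|\geq2R_{0}$ and $z\in\supp(\nabla V^{N})\subset B(R_{0})$ then $|y-z|\geq|y|/2$, so $g_{t-\tau}(y-z)\leq(4\nu\pi(t-\tau))^{-d/2}e^{-|y|^{2}/(16\nu(t-\tau))}$, and therefore, using $\|\nabla V^{N}\|_{L^{1}}=N^{\beta}\|\nabla V\|_{L^{1}}$ by scaling,
$$|h(y)|\leq(4\nu\pi(t-\tau))^{-d/2}e^{-|y|^{2}/(16\nu(t-\tau))}\,\|\nabla V^{N}\|_{L^{1}}\leq C_{\nu,d}\,(t-\tau)^{-d/2}N^{\beta}\,e^{-|y|^{2}/(16\nu(t-\tau))}.$$
Squaring, integrating, and using the Gaussian moment $\int_{\R^{d}}|y|^{d+1}e^{-|y|^{2}/(8\nu(t-\tau))}\,\d y=C_{\nu,d}(t-\tau)^{(2d+1)/2}$, I obtain
$$I_{2}\leq C_{\nu,d}\,(t-\tau)^{-d}N^{2\beta}(t-\tau)^{(2d+1)/2}=C_{\nu,d}\,(t-\tau)^{1/2}N^{2\beta}.$$
Since $t-\tau\leq T$, the positive power $(t-\tau)^{1/2}$ is bounded by $C_{T,\delta}(t-\tau)^{-(1-\delta)}$; and since $d\geq2$ (the one place this is used) together with $N\geq1$ we have $N^{2\beta}\leq N^{\beta(d+2\delta)}$. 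Hence $I_{2}\leq C_{\delta,\nu,d,T}(t-\tau)^{-(1-\delta)}N^{\beta(d+2\delta)}$, and adding $I_{1}$ and $I_{2}$ proves the lemma.

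This argument is essentially routine, so there is no serious obstacle. The only points needing care are keeping the scaling exponents consistent — the far region costs only the $L^{1}$-mass $\|\nabla V^{N}\|_{L^{1}}\sim N^{\beta}$, whereas the near region genuinely needs the fractional Sobolev norm $\|V^{N}\|_{\delta,2}\sim N^{\beta(d/2+\delta)}$ produced by the heat-semigroup smoothing — and noting that a positive power of $t-\tau$ is harmless on $[0,T]$, so that the prescribed singular factor $(t-\tau)^{-(1-\delta)}$ is really generated only by the $I_{1}$ term.
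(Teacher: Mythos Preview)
Your proof is correct, and it takes a genuinely different route from the paper's. The paper proves two endpoint bounds for the full integral,
\[
\int_{\R^d}|y|^{d+1}\big|\nabla e^{(t-\tau)A}V^N(y)\big|^2\,\d y\leq CN^{\beta(d+2)}
\quad\text{and}\quad
\int_{\R^d}|y|^{d+1}\big|\nabla e^{(t-\tau)A}V^N(y)\big|^2\,\d y\leq CN^{d\beta}(t-\tau)^{-1},
\]
obtained respectively by passing $\nabla$ onto $V^N$ (Jensen with the heat kernel) and by passing $\nabla$ onto the heat kernel; it then interpolates with exponents $\delta$ and $1-\delta$ to hit exactly $N^{\beta(d+2\delta)}(t-\tau)^{-(1-\delta)}$. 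Your approach instead splits spatially: the near region $B(2R_0)$ absorbs the weight trivially and reuses verbatim the $\|V^N\|_{\delta,2}$ semigroup estimate from the $J_1$ computation, while the far region is handled by a bare Gaussian tail bound together with $\|\nabla V^N\|_{L^1}\sim N^\beta$. The advantage of your argument is that it is more elementary and recycles an estimate already on the page; the main term $I_1$ already carries the exact exponent, and $I_2$ is strictly lower order. The cost is that you need the ambient assumption $d\geq2$ to embed $N^{2\beta}$ into $N^{\beta(d+2\delta)}$, whereas the paper's interpolation produces the exponent $d+2\delta$ intrinsically for any $d$.
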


We postpone the proof of Lemma \ref{y4} in Section \ref{sectionprooflemma}. Thus, the second term $J_2$ can be estimated as
\begin{equation}\label{M1J2}
	\begin{aligned}
		J_2\leq&\, C_{\delta}\left\|K_{\epsilon}\right\|_{\infty}^{d+1}\left(t-s\right)^{\frac{1}{2m}}N^{\beta\left(\frac{d}{2}+\delta\right)-\frac{1}{4m}}\left(\int_{s}^{t} \frac{1}{(t-\tau)^{m^{\prime}(1-\delta)}} \d \tau\right)^{\frac{1}{2m^{\prime}}}\\
		\leq&\, C_{\delta}\left\|K_{\epsilon}\right\|_{\infty}^{d+1} N^{\beta\left(\frac{d}{2}+\delta\right)-\frac{1}{4m}}\left(t-s\right)^{\frac{\delta}{2}}.
	\end{aligned}
\end{equation}
Therefore, adding \eqref{M1J1} and \eqref{M1J2}, we have
\begin{equation}\label{step11}
	\begin{aligned}
		\tilde M_1\leq C_{\delta}\left\|K_{\epsilon}\right\|_{\infty}^{d+1} N^{\beta\left(\frac{d}{2}+\delta\right)-\frac{1}{4m}}\left(t-s\right)^{\frac{\delta}{2}}.
	\end{aligned}
\end{equation}

\smallskip

\noindent\textbf{Step 2. The bound of $\tilde M_2$ in \eqref{m1m21}}. \smallskip

Similarly to what we have done in Step 1, we adopt the weight $w(x)=1+|x|^{\frac{d+1}{2}}$ so that we can apply BDG-type inequality in $L^2(\R^d)$:
{\small \begin{equation*}\label{m1L1}
		\begin{aligned}
			\tilde M_2^m\leq&\, \E\bigg\|\sum_{k} \int_{0}^{s}\frac{1}{N} \sum_{i=1}^{N}\sigma_{k}^{N}(X_{\tau}^{N,i})\cdot   \big(\nabla e^{(s-\tau) A} \big(e^{(t-s)A}-\mathrm{I}\big)V^{N}\big) \big(\cdot-X_{\tau}^{N,i}\big) \d W_{\tau}^{k} \bigg\|_{L^1}^m\\
			\leq&\,C_d \E\bigg\| \sum_{k} \int_{0}^{s}\frac{1}{N} \sum_{i=1}^{N}\sigma_{k}^{N}(X_{\tau}^{N,i})\cdot  w(\cdot) \big(\nabla e^{(s-\tau) A} \big(e^{(t-s)A}-\mathrm{I}\big)V^{N}\big) \big(\cdot-X_{\tau}^{N,i}\big) \d W_{\tau}^{k} \bigg\|_{L^2}^m\\
			\leq&\, C_d \E\Bigg[ \sum_{k}\int_{0}^{s} \bigg\|\frac{1}{N} \sum_{i=1}^{N}\sigma_{k}^{N}(X_{\tau}^{N,i})\cdot w(\cdot) \big(\nabla e^{(s-\tau) A} \big(e^{(t-s)A}-\mathrm{I}\big)V^{N}\big) \big(\cdot-X_{\tau}^{N,i}\big) \bigg\|_{L^{2}}^{2} \d \tau \Bigg]^{\frac{m}{2}}.\\
		\end{aligned}
\end{equation*} }

\noindent We follow the idea for estimating $\tilde M_1$, define
$$g_{\tau}(x, X_{\tau}^{N,i} ):= w(x) \big(\nabla e^{(s-\tau) A}\big(e^{(t-s)A}-\mathrm{I}\big) V^{N}\big)\left(x-X_{\tau}^{N,i}\right)$$
to replace $f_{\tau}(x, X_{\tau}^{N,i} )$ from \eqref{m1L11} to \eqref{Efm} and integrate from $0$ to $s$. Then, we can prove the estimate
\begin{equation}\label{M2}
	\begin{aligned}
		\tilde M_2
		\leq&\,\frac{C\left\|K_{\epsilon}\right\|_{\infty}^{d+1}}{N^{\frac{1}{4m}}}\left\|\int_{\R^d}\big(1+|y|^{d+1}\big) \big| \nabla e^{(s-\tau) A}\big(e^{(t-s)A}-\mathrm{I}\big) V^{N} (y )\big|^{2}\d y\right\|^{\frac{1}{2}}_{L^{m^{\prime}}([0,s])}.
	\end{aligned}
\end{equation}
To continue our proof, we present the following lemma and postpone its proof in Section \ref{sectionprooflemma}.

\begin{lemma}\label{1y4}
	Assume Hypotheses \ref{hypothesis} and $\frac{2}{m}<\delta<1$; we take $\kappa$ small enough such that $0<\kappa<1-\delta$, then for any $N\in\N$,
	$$
	\int_{\R^{d}} \!\big(1+|y|^{d+1}\big) \big|\nabla e^{(s-\tau) A}\big(e^{(t-s)A}-\mathrm{I}\big)V^{N}(y) \big|^{2} \d y\leq C_{\kappa,\delta} N^{\beta(d+\delta(2d+2-\frac{d-2}{1-\kappa}))} \frac{(t-s)^{\delta}}{(s-\tau)^{1-\frac{\delta}{1-\kappa}}}.
	$$
\end{lemma}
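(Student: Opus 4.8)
The plan is to follow the approach used for Lemma \ref{y4}, now applied to the function $\Phi:=\big(e^{(t-s)A}-\mathrm I\big)V^N$ in place of $V^N$: the extra increment operator $e^{(t-s)A}-\mathrm I$ is what produces the factor $(t-s)^{\delta}$ and, together with the frequency localization of $\widehat{V^N}$, the modified power of $N^{\beta}$. First I would reduce the claim via Plancherel. Since $1+|y|^{d+1}\le C_d\big(1+|y|^{2n}\big)$ with $n:=\lceil (d+1)/2\rceil$, and $|y|^{2n}=\big(\sum_j y_j^2\big)^n=\sum_{|\alpha|=n}\binom{n}{\alpha}\,y^{2\alpha}$, the identity $\mathcal F(y^{\alpha}g)=i^{|\alpha|}\partial_\xi^{\alpha}\widehat g$ and $\widehat{V^N}(\xi)=\widehat V(N^{-\beta}\xi)$ (from $V^N(x)=N^{d\beta}V(N^{\beta}x)$) reduce the left-hand side to
\begin{equation*}
C\sum_{|\alpha|\le n}\big\|\partial_\xi^{\alpha}\!\big(i\xi\,e^{-\nu(s-\tau)|\xi|^2}\big(e^{-\nu(t-s)|\xi|^2}-1\big)\widehat V(N^{-\beta}\xi)\big)\big\|_{L^2(\R^d)}^2 ,
\end{equation*}
the $\alpha=0$ term being just the unweighted bound, which is the easiest.

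Next I would expand $\partial_\xi^{\alpha}$ by the Leibniz rule over the four factors $i\xi$, $e^{-\nu(s-\tau)|\xi|^2}$, $e^{-\nu(t-s)|\xi|^2}-1$, $\widehat V(N^{-\beta}\xi)$, and estimate each resulting product pointwise. A $j$-th derivative of the first Gaussian is a Hermite polynomial in $\sqrt{s-\tau}\,\xi$ times $(s-\tau)^{j/2}e^{-\nu(s-\tau)|\xi|^2}$, which I absorb into a narrower Gaussian $e^{-\nu'(s-\tau)|\xi|^2}$; the undifferentiated increment factor is controlled by $\big|e^{-\nu(t-s)|\xi|^2}-1\big|\le C\big(\nu(t-s)|\xi|^2\big)^{\delta/(1-\kappa)}$, which is legitimate precisely because $\kappa<1-\delta$ forces $\delta/(1-\kappa)\le 1$ — this is where the denominators $1-\kappa$ enter — and then $(t-s)^{\delta/(1-\kappa)}\le (t-s)^{\delta}$ (for $t-s\le T$, up to constants) isolates the claimed $(t-s)^{\delta}$; each derivative of order $\ge 1$ of the increment factor instead contributes $(t-s)^{\ell/2}e^{-\nu'(t-s)|\xi|^2}$; and every derivative of $\widehat V(N^{-\beta}\cdot)$ brings down a factor $N^{-\beta}$ and leaves a Schwartz (hence bounded, rapidly decaying) function. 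After these pointwise bounds, each term reduces to a Gaussian moment $\int_{\R^d}|\xi|^{2\ell}e^{-\nu'(s-\tau)|\xi|^2}\d\xi = C_\ell(s-\tau)^{-(d+2\ell)/2}$, multiplied by powers of $(t-s)$ and $N^{-\beta}$; alternatively, whenever the surplus factor $|\xi|^{2\ell}$ produced by the increment or by differentiating the Gaussian is too singular as $\tau\to s$, I would instead use that $\widehat V(N^{-\beta}\xi)$ is essentially supported in $\{|\xi|\lesssim N^{\beta}\}$ to trade a power of $|\xi|$ for a power of $N^{\beta}$. Summing over $|\alpha|\le n$ and balancing these two ways of disposing of the surplus frequency powers should yield exactly the exponent $d+\delta\big(2d+2-\tfrac{d-2}{1-\kappa}\big)$ of $N^{\beta}$ and the exponent $1-\tfrac{\delta}{1-\kappa}$ of $(s-\tau)^{-1}$.

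The main obstacle I expect is this last bookkeeping step: one must simultaneously track three competing localizations — the frequency cut-off $|\xi|\lesssim N^{\beta}$ coming from $\widehat V(N^{-\beta}\xi)$, the Gaussian concentration $|\xi|\lesssim (s-\tau)^{-1/2}$ coming from $e^{(s-\tau)A}$, and the fact that $e^{(t-s)A}-\mathrm I$ is only appreciable on $|\xi|\gtrsim (t-s)^{-1/2}$ — and in each Leibniz term choose the split between "absorb $|\xi|$ into the Gaussian" and "absorb $|\xi|$ into $N^{\beta}$" so that the final three exponents come out exactly as stated, while keeping the power of $(s-\tau)$ strictly below $1$; this last requirement is what is needed so that, after raising to the power $m'<2$ in \eqref{M2}, the integral over $\tau\in[0,s]$ still converges, and it is exactly the content of the constraint $0<\kappa<1-\delta$. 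The Plancherel reduction, the Leibniz expansion and the Gaussian moment computations are otherwise routine and parallel those in Lemma \ref{y4}.
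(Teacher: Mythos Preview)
Your approach via Plancherel and a Leibniz expansion is genuinely different from the paper's, which stays entirely in physical space and proceeds by interpolation between two endpoint estimates. First, the paper bounds the integrand crudely by $2|\nabla e^{(t-\tau)A}V^N|^2+2|\nabla e^{(s-\tau)A}V^N|^2$ and reuses the heat-kernel computations behind Lemma~\ref{y4} (in particular \eqref{y2Vn2}) to get
\[
\int_{\R^d}(1+|y|^{d+1})\big|\nabla e^{(s-\tau)A}\big(e^{(t-s)A}-\mathrm I\big)V^N(y)\big|^2\,\d y\le CN^{d\beta}(s-\tau)^{-1},
\]
with no gain in $t-s$. For the second endpoint it uses the pointwise bound $|\nabla e^{(s-\tau)A}(e^{(t-s)A}-\mathrm I)V^N|\le \|\nabla^2 V^N\|_\infty\sqrt{t-s}\le CN^{\beta(d+2)}\sqrt{t-s}$, pulls out the power $2(1-\kappa)$ of this, and controls the residual factor $|\cdot|^{2\kappa}$ against the weight $(1+|y|^{d+1})$ by a direct real-space calculation, obtaining $CN^{2\beta(d+2-(d+1)\kappa)}(t-s)^{1-\kappa}$ uniformly in $s-\tau$. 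The claimed inequality then drops out by interpolating these two bounds with exponents $1-\tfrac{\delta}{1-\kappa}$ and $\tfrac{\delta}{1-\kappa}$; this is exactly where $\kappa$ enters and why $\kappa<1-\delta$ is needed.

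The paper's route is considerably shorter because it decouples the two competing scales $(s-\tau)$ and $(t-s)$ into two separate one-line estimates and only recombines them at the end, making the origin of each of the three exponents transparent. Your Fourier scheme can in principle recover the same bound, but your proposal stops precisely at the hard step --- the balancing that produces the exact powers of $N^\beta$ and $(s-\tau)$ --- which you yourself flag as the main obstacle. In particular, the Leibniz terms where derivatives land on $e^{-\nu(s-\tau)|\xi|^2}$ generate extra factors $(s-\tau)^{j/2}|\xi|^{j}$, and deciding for each such term how much of the surplus $|\xi|$-power to absorb into the Gaussian moment versus into $N^\beta$ (via the decay of $\widehat V$) so as to land on the stated exponents is exactly the delicate bookkeeping the paper avoids.
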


Since $\frac{\delta}{1-\kappa}>\frac{1}{m}$, the function $(0,s)\ni \tau \mapsto (s-\tau)^{m^{\prime}(\frac{\delta}{1-\kappa}-1)}$ is integrable; substituting the above estimate into \eqref{M2}, 
we have
\begin{equation}\label{step21}
	\begin{aligned}
		\tilde M_2
		\leq&\,C_{\kappa,\delta}\left\|K_{\epsilon}\right\|_{\infty}^{d+1}N^{\frac{\beta}{2}(d+ \delta(2d+2-(d-2)))-\frac{1}{4m}}(t-s)^{\frac{\delta}{2}} \bigg(\int_{0}^{s} \frac{1}{(s-\tau)^{m^{\prime}(1-\frac{\delta}{1-\kappa})}} \d \tau\bigg)^{\frac{1}{2m^{\prime}}}\\
		\leq&\,  C_{\delta}\left\|K_{\epsilon}\right\|_{\infty}^{d+1}N^{\frac{\beta}{2}(d+ \delta(d+4))-\frac{1}{4m}}(t-s)^{\frac{\delta}{2}}.
	\end{aligned}
\end{equation}

\smallskip

\noindent\textbf{Step 3. The bound of \eqref{m1m21}}. \smallskip

\eqref{step11} and \eqref{step21} yield that
\begin{equation}
	\begin{aligned}
		\big\|Z^N_t-Z^N_s \big\|_{L^m(\Omega,L^1)}\leq&\, C_{\delta} \|K_{\epsilon} \|_{\infty}^{d+1}\Big(N^{\beta\left(\frac{d}{2}+\delta\right)-\frac{1}{4m}}+N^{\frac{\beta}{2}(d+ \delta(d+4))-\frac{1}{4m}}\Big) (t-s)^{\frac{\delta}{2}}\\
		\leq&\,  C_{\delta} \|K_{\epsilon} \|_{\infty}^{d+1}N^{\frac{\beta}{2}(d+ \delta(d+4))-\frac{1}{4m}}(t-s)^{\frac{\delta}{2}}.
	\end{aligned}
\end{equation}
Applying Lemma \ref{sup} and taking $\eta=\frac{\delta}{2}$ satisfying $m\eta>1$, we deduce that
\begin{equation*}
	\bigg\| \sup_{t \in[0, T]}\left\| Z^N_t\right\|_{L^1\left(\R^{d}\right)} \bigg\|_{L^m(\Omega)}\leq C_{m,\delta,T}\left\|K_{\epsilon}\right\|_{\infty}^{d+1}N^{\frac{\beta}{2}(d+ \delta(d+4))-\frac{1}{4m}},
\end{equation*}
where $\frac{2}{m}<\delta<1$ and  $\beta$ satisfies
$$0<\beta\leq\frac{1}{4m(d+2)}<\frac{1}{2m(d+\delta(d+4))}.$$

\subsubsection{Proofs of Lemmas \ref{y4} and \ref{1y4}}\label{sectionprooflemma}

\begin{proof}[Proof of Lemma \ref{y4}]
	On the one hand, by the definition \eqref{semigroup} of semigroup $e^{(t-\tau) A}$, and $\int_{\R^d}g_{t-\tau}(y-z)\d z=1$,
	\begin{equation*}
		\begin{aligned}
			\int_{\R^d}|y|^{d+1} \big| \nabla e^{(t-\tau) A} V^{N} (y )\big|^{2}\d y
			\leq&\,\int_{\R^d}|y|^{d+1}\bigg| \int_{\R^d}g_{t-\tau}(y-z)  \nabla V^{N} (z )\d z\bigg|^{2}\d y\\
			\leq&\,\int_{\R^d}|y|^{d+1} \int_{\R^d} \left|\nabla V^{N} (z )\right|^{2}g_{t-\tau}(y-z) \d z\d y.
		\end{aligned}
	\end{equation*}
	Notice that $V^N(z)=N^{d\beta}V(N^{\beta}z)$; changing the variable $z^{\prime}=N^{\beta}z$ and by Fubini's theorem,
	\begin{equation*}
		\begin{aligned}
			&\, \int_{\R^d}|y|^{d+1} \big| \nabla e^{(t-\tau) A} V^{N} (y )\big|^{2}\d y \\
			\leq&\,N^{\beta(d+2)}\int_{\R^d}|y|^{d+1} \int_{\R^d} |\nabla V (z^{\prime} ) |^{2}g_{t-\tau} \big(y-N^{-\beta} z^{\prime} \big) \d z^{\prime}\d y\\
			\leq&\,N^{\beta(d+2)}\int_{\R^d} |\nabla V (z^{\prime} ) |^{2} \d z^{\prime} \int_{\R^d} \big|y+ N^{-\beta} z^{\prime} \big|^{d+1} g_{t-\tau}(y)\d y.\\
		\end{aligned}
	\end{equation*}
	Since $V\in C_c^{\infty}(\R^d,\R_{+})$ by Hypotheses \ref{hypothesis}, we have
	\begin{equation}\label{y2Vn1}
		\begin{aligned}
			&\,\int_{\R^d}|y|^{d+1} \big| \nabla e^{(t-\tau) A} V^{N} (y )\big|^{2}\d y\\
			\leq&\,CN^{\beta(d+2)}\left(\int_{\R^d}  |\nabla V (z^{\prime} ) |^{2} \d z^{\prime}\int_{\R^d}|y|^{d+1} g_{t-\tau}(y) \d y+\int_{\R^d} \big| N^{-\beta} z^{\prime} \big|^{d+1} |\nabla V (z^{\prime} ) |^{2} \d z^{\prime}\right)\\
			\leq&\, CN^{\beta(d+2)}\left((t-\tau)^{\frac{d+1}{2}}+N^{-\beta(d+1)}\right)\\
			\leq&\, CN^{\beta(d+2)}.
		\end{aligned}
	\end{equation}
	On the other hand, as $\nabla g_{t-\tau}(z)=Cg_{t-\tau}(z)\frac{z}{t-\tau}$, one has
	\begin{equation*}
		\begin{aligned}
			\int_{\R^d}\! |y|^{d+1}\big| \nabla e^{(t-\tau) A} V^{N} (y )\big|^{2}\d y
			=&\,\int_{\R^d}|y|^{d+1}\left|\int_{\R^d}\nabla g_{t-\tau}(y-z) V^{N} (z ) \d z\right|^{2}\d y\\
			\leq&\,\frac{C}{(t-\tau)^2} \!\int_{\R^d}\! |y|^{d+1}\bigg|\! \int_{\R^d}\!\! |y-z | V^{N} (z )g_{t-\tau}(y-z) \d z\bigg|^{2}\d y.
		\end{aligned}
	\end{equation*}
	Cauchy-Schwarz inequality implies
	$$
	\begin{aligned}
		&\left| \int_{\R^d} |y-z | V^{N} (z ) g_{t-\tau}(y-z) \d z\right|^{2}\\
		\leq&\,\left(\int_{\R^d} |y-z |^2g_{t-\tau}(y-z) \d z\right)\left(\int_{\R^d} |V^{N} (z ) |^2g_{t-\tau}(y-z) \d z\right)\\
		\leq&\, C(t-\tau)\int_{\R^d} |V^{N} (z ) |^2g_{t-\tau}(y-z) \d z,
	\end{aligned}
	$$
	hence,
	\begin{equation*}
		\begin{aligned}
			\int_{\R^d}|y|^{d+1} \big| \nabla e^{(t-\tau) A} V^{N} (y )\big|^{2}\d y
			\leq&\,\frac{C}{t-\tau}\int_{\R^d}|y|^{d+1} \int_{\R^d} |V^{N} (z ) |^2g_{t-\tau}(y-z) \d z\d y
			\\\leq&\,\frac{CN^{d\beta}}{t-\tau}\int_{\R^d}|y|^{d+1} \int_{\R^d} | V (z^{\prime} ) |^{2} g_{t-\tau} \big(y- N^{-\beta} z^{\prime} \big) \d z^{\prime}\d y\\
			\leq&\,\frac{CN^{d\beta}}{t-\tau} \int_{\R^d} | V (z^{\prime} )|^{2}\d z^{\prime}
			\int_{\R^d} \big|y+ N^{-\beta}z^{\prime} \big|^{d+1} g_{t-\tau}(y) \d y,
		\end{aligned}
	\end{equation*}
	where in the last two steps we have changed variables ($z'= N^\beta z$) and used Fubini's theorem. Then,
	\begin{equation}\label{y2Vn2}
		\begin{aligned}
			&\,\int_{\R^d}|y|^{d+1} \big| \nabla e^{(t-\tau) A} V^{N} (y )\big|^{2}\d y
			\\\leq&\,\frac{CN^{d\beta}}{t-\tau}\left(\int_{\R^d} | V(z^{\prime} ) |^{2} \d z^{\prime}\int_{\R^d}|y|^{d+1} g_{t-\tau}(y) \d y + \int_{\R^d} \big|N^{-\beta} z^{\prime} \big|^{d+1} | V(z^{\prime} ) |^{2} \d z^{\prime}\right)
			\\\leq&\,\frac{CN^{d\beta}}{t-\tau}\big( (t-\tau )^{\frac{d+1}{2}} +N^{-(d+1)\beta}\big)\\
			\leq&\, CN^{d\beta}(t-\tau)^{-1}.
		\end{aligned}
	\end{equation}
	By interpolating \eqref{y2Vn1} and \eqref{y2Vn2} with powers $\delta$ and $1-\delta$, we complete the proof of Lemma \ref{y4}.
\end{proof}

\begin{proof}[Proof of Lemma \ref{1y4}]
	Firstly, 
	$$
	\begin{aligned}
		\big|\nabla e^{(s-\tau) A}\big(e^{(t-s)A}-\mathrm{I}\big) V^{N} (y )\big|^{2} \leq\,2\big|\nabla e^{(t-\tau) A} V^{N}(y) \big|^{2} +2\big|\nabla e^{(s-\tau) A} V^{N}(y)\big|^{2}.
	\end{aligned}
	$$
	Hence,
	$$
	\begin{aligned}
		&\,\int_{\R^d}(1+|y|^{d+1}) \big|\nabla e^{(s-\tau) A}\big(e^{(t-s)A}-\mathrm{I}\big) V^{N} (y )\big|^{2} \d y \\
		\leq&\, 2\int_{\R^d}(1+|y|^{d+1}) \Big(\big|\nabla e^{(t-\tau) A} V^{N}(y)\big|^{2} +\big|\nabla e^{(s-\tau) A} V^{N}(y)\big|^{2}\Big) \d y \\
		=&\, 2\int_{\R^d}(1+|y|^{d+1}) \big|\nabla e^{(t-\tau) A} V^{N}(y)\big|^{2}\d y +2\int_{\R^d}(1+|y|^{d+1}) \big|\nabla e^{(s-\tau) A} V^{N}(y)\big|^{2} \d y \\
		=:&\,B_1+B_2.
	\end{aligned}
	$$
	Recalling \eqref{y2Vn2} in the proof of Lemma \ref{y4}, we already have
	\begin{equation*}
		\int_{\R^d}|y|^{d+1}\big| \nabla e^{(t-\tau) A} V^{N}\left(y\right)\big|^{2}\d y\leq CN^{d\beta}(t-\tau)^{-1}.
	\end{equation*}
	Observe that
	$$
	\int_{\R^d} \big|\nabla e^{(t-\tau) A} V^{N}(y) \big|^2\d y= \big\|\nabla e^{(t-\tau) A} V^{N} \big\|_{L^2}^2 \leq C(t-\tau)^{-1} \|V^N \|_{L^2}^2 \leq  CN^{d\beta}(t-\tau)^{-1} ,
	$$
	thus, it holds
	$$
	B_1 \leq C N^{d\beta}\left(t-\tau\right)^{-1} .
	$$
	Similarly, we have
	$$ B_2 \leq C N^{d\beta}\left(s-\tau\right)^{-1}.$$
	Since $\tau<s<t$, one has
	\begin{equation}\label{iv1}
		\int_{\R^{d}}(1+|y|^{d+1}) \big|\nabla e^{(s-\tau) A} \big(e^{(t-s)A}-\mathrm{I}\big)V^{N}(y)\big|^{2} \d y \leq C N^{d\beta}\left(s-\tau\right)^{-1}.
	\end{equation}
	
	Secondly, applying the gradient on $V^{N}$, one gets
	$$
	\begin{aligned}
		&\, \big|\nabla e^{(s-\tau) A}\big(e^{(t-s)A}-\mathrm{I}\big)V^{N}(y)\big| \\
		= &\,\bigg|\int_{\R^d} g_{s-\tau}(y-x) \bigg(\int_{\R^d} g_{t-s}(z)  \nabla V^{N}(x-z)\d z-\nabla V^{N}(x) \bigg)\d x\bigg| \\
		\leq &\, \int_{\R^d} g_{s-\tau}(y-x) \int_{\R^d} g_{t-s}(z)\left|\nabla V^{N}(x-z)-\nabla V^{N}(x)\right| \d z \d x \\
		\leq &\, \left\|\nabla^{2} V^{N}\right\|_{\infty} \int_{\R^d} g_{s-\tau}(y-x) \int_{\R^d} g_{t-s}(z)|z| \d z \d x.
	\end{aligned}
	$$
	Since $\left\|\nabla^{2} V^{N}\right\|_{\infty}\leq CN^{\beta(d+2)}$, we have
	$$
	\big|\nabla e^{(s-\tau) A}\big(e^{(t-s)A}-\mathrm{I}\big)V^{N}(y)\big|\leq C N^{\beta(d+2)} \sqrt{t-s}.
	$$
	Then, for any $\kappa \in (0,1)$,
	$$ \begin{aligned}
		&\int_{\R^{d}}\big(1+|y|^{d+1}\big)\big|\nabla e^{(s-\tau) A}\big(e^{(t-s)A}-\mathrm{I}\big)V^{N}(y) \big|^{2} \d y\\
		\leq &\, C \big(N^{\beta(d+2)} \sqrt{t-s}\big)^{2(1-\kappa)} \int_{\R^{d}}\big(1+|y|^{d+1}\big) \big|\nabla e^{(s-\tau) A} \big(e^{(t-s)A} -\mathrm{I}\big)V^{N}(y) \big|^{2\kappa} \d y \\
		\leq&\, C \big(N^{\beta(d+2)} \sqrt{t-s}\big)^{2(1-\kappa)}\left\{\int_{\R^d}\big(1+|y|^{d+1}\big) \big|e^{(t-\tau) A} \nabla V^N(y) \big|^{2\kappa} \d y\right.\\
		&\qquad\qquad\qquad\qquad\qquad\qquad\qquad\left. +\int_{\R^d}\big(1+|y|^{d+1}\big) \big|e^{(s-\tau) A} \nabla V^N(y)\big|^{2\kappa} \d y\right\}.
	\end{aligned}
	$$
	By \eqref{semigroup} and changing variables, we have
	$$
	\big|e^{(t-\tau)A}\nabla V^{N}(y)\big|\leq N^{\beta}\int_{\R^{d}}g_{t-\tau}\big(y- N^{-\beta}z^{\prime} \big) |\nabla V(z^{\prime}) |\d z^{\prime},
	$$
	therefore,
	\begin{equation*}
		\begin{aligned}
			&\,\int_{\R^{d}}\big(1+|y|^{d+1}\big) \big| e^{(t-\tau) A}\nabla V^{N}(y)\big|^{2\kappa} \d y\\
			\leq&\, N^{2\beta\kappa}\int_{\R^{d}} |\nabla V(z^{\prime}) |^{2\kappa}\d z^{\prime} \int_{\R^{d}}\big(1+ |y+ N^{-\beta} z^{\prime} \big|^{d+1}\big)g^{2\kappa}_{t-\tau}(y)\d y\\
			\leq&\, CN^{2\beta\kappa}\int_{\R^{d}} |\nabla V(z^{\prime}) |^{2\kappa}\d z^{\prime}\int_{\R^{d}} \big(1+ |y |^{d+1} +\big| N^{-\beta}z^{\prime} \big|^{d+1}\big)g^{2\kappa}_{t-\tau}(y)\d y\\
			\leq&\, C_{d,\kappa} N^{2\beta\kappa}.
		\end{aligned}
	\end{equation*}
	In the same way, we can deduce that
	\begin{equation*}
		\begin{aligned}
			\int_{\R^{d}}\big(1+|y|^{d+1}\big) \big|e^{(s-\tau) A}\nabla V^{N}(y) \big|^{2\kappa}\d y
			\leq\, C_{d,\kappa} N^{2\beta\kappa}.
		\end{aligned}
	\end{equation*}
	Summarizing the above calculations yields
	\begin{equation}\label{iv2}
		\int_{\R^{d}}\big(1+|y|^{d+1}\big) \big|\nabla e^{(s-\tau) A}\big(e^{(t-s)A}-\mathrm{I}\big)V^{N}(y) \big|^{2} \d y\leq C N^{2\beta(d+2-(d+1)\kappa)} (t-s)^{1-\kappa},
	\end{equation}
	where $C$ is a constant independent of $\tau,s,t$.
	
	Finally, we can interpolate between \eqref{iv1} and \eqref{iv2} with the powers $1-\frac{\delta}{1-\kappa}$ and $\frac{\delta}{1-\kappa}$:
	$$
	\int_{\R^{d}}\!\! \big(1+|y|^{d+1}\big) \big|\nabla e^{(s-\tau) A}\! \big(e^{(t-s)A}-\mathrm{I}\big)V^{N}\! (y) \big|^{2} \d y\leq C_{\kappa,\delta} N^{\beta(d+\delta(2d+2-\frac{d-2}{1-\kappa}))} \frac{(t-s)^{\delta}}{(s-\tau)^{1-\frac{\delta}{1-\kappa}}}.
	$$
	Since $\frac{2}{m}<\delta<1$, we can take $\kappa$ small enough such that $\delta<1-\kappa$.	
\end{proof}

\section{Estimates on initial data}

\subsection{The boundedness of $\omega_0^N$}\label{boundomega0}

We now give a sketch of proof that $\{\omega^N_0 \}_{N\geq 1}$ is uniformly bounded in $L^m(\Omega, \L)$, in the special case $m\leq p$. We shall also assume that $p=3$; the same idea works for more general integer $p$, at the price of more complicated computations. Recall that
$$\omega^N_0(x)=V^N\ast S_0^N(x)= \frac1N \sum_{i=1}^{N}V^N(x-X_0^i) \geq 0.$$
Firstly, we have
\begin{align}\label{lm1}
	\big\|\omega^N_0  \big\|_{L^m(\Omega, L^1)} = \bigg\{\E\bigg[\int_{\R^d} \frac{1}{N}\sum_{i=1}^{N}V^N(x-X_0^i)\, \d x\bigg]^m \bigg\}^{\frac1m}=1.
\end{align}
Next, as $m\leq p$, it holds
\begin{equation}\label{jensen}
	\begin{aligned}
		\big\|\omega^N_0  \big\|_{L^m(\Omega, L^p)} \leq \big\|\omega^N_0  \big\|_{L^p(\Omega, L^p)} =\bigg[\E\int_{\R^d}\bigg(\frac{1}{N}\sum_{i=1}^{N}V^N(x-X_0^i)\bigg)^p\d x\bigg]^{\frac{1}{p}}.
	\end{aligned}
\end{equation}
From now on we take  $p=3$. One has
$$
\begin{aligned} &\,\E\bigg(\frac{1}{N}\sum_{i=1}^{N}V^N(x-X_0^i)\bigg)^3\\
	=&\, \frac{1}{N^3}\sum_{i=1}^{N}\E\big(V^N(x-X_0^i)^3\big)+\frac{3}{N^3}\sum_{i\neq j}\E\big(V^N(x-X_0^i) \big) \E\big(V^N(x-X_0^j)^2\big)\\
	&+\frac{1}{N^3}\sum_{i\neq j\neq k}\E\big(V^N(x-X_0^i) \big)\E\big(V^N(x-X_0^j)\big) \E\big(V^N(x-X_0^k) \big)\\
	\leq&\, \frac{1}{N^2}\E\big(V^N(x-X_0^1)^3\big) +\frac{3}{N}\E\big(V^N(x-X_0^1) \big) \E\big(V^N(x-X_0^2)^2\big) +\big[\E\big(V^N(x-X_0^1)\big)\big]^3.
\end{aligned}
$$
By Jensen's inequality,
$$\aligned
\E\big(V^N(x-X_0^1) \big) \E\big(V^N(x-X_0^2)^2\big) &\leq \big[\E\big(V^N(x-X_0^1)^3 \big)\big]^{\frac13} \big[\E\big(V^N(x-X_0^2)^3 \big)\big]^{\frac23} \\
&= \E\big(V^N(x-X_0^1)^3\big),
\endaligned $$
therefore,
$$\E\bigg(\frac{1}{N}\sum_{i=1}^{N}V^N(x-X_0^i)\bigg)^3 \leq \frac4N \E\big(V^N(x-X_0^1)^3\big) + \big[\E\big(V^N(x-X_0^1)\big)\big]^3.$$
We have
$$\aligned \E\big(V^N(x-X_0^1)^3\big) &=\int_{\R^d}N^{3\beta d}V^3(N^{\beta}(x-y))\omega_0(y)\,\d y =N^{2\beta d}(V^3)^N\ast \omega_{0}(x), \\
\E\big(V^N(x-X_0^1) \big) & =\int_{\R^d}N^{\beta d}V(N^{\beta}(x-y))\omega_0(y)\,\d y =V^N\ast \omega_{0}(x),
\endaligned $$
where $(V^3)^N$ is a convolution kernel defined similarly as $V^N$ in terms of $V^3$. Thus,
\begin{equation*}\label{e}
	\begin{aligned}
		\E\bigg(\frac{1}{N}\sum_{i=1}^{N}V^N(x-X_0^i)\bigg)^3
		\leq \frac{4}{N^{1-2\beta d}}(V^3)^N\ast \omega_{0}(x) +\big(V^N\ast \omega_{0}(x)\big)^3 .
	\end{aligned}
\end{equation*}
By our choice of $\beta$ it holds $1-2\beta d\geq 0$; substituting this estimate into \eqref{jensen} leads to ($p=3$)
$$\aligned
\big\|\omega^N_0  \big\|_{L^m(\Omega, L^3)} &\leq C \bigg[\int_{\R^d} \Big((V^3)^N\ast \omega_{0}(x) +\big(V^N\ast \omega_{0}(x)\big)^3 \Big)\, \d x\bigg]^{\frac{1}{3}} \\
&= C \Big[ \big\|(V^3)^N\ast \omega_{0} \big\|_{L^1} + \big\|V^N\ast \omega_{0} \big\|_{L^3}^3 \Big]^{\frac{1}{3}} \\
&\leq C\Big[ \big\|(V^3)^N \big\|_{L^1} \|\omega_0 \|_{L^1} + \big\|V^N \big\|_{L^1}^3 \big\|\omega_{0}\|_{L^3}^3 \Big]^{\frac{1}{3}}.
\endaligned $$
Since $\|\omega_0 \|_{L^1}= \big\|V^N \big\|_{L^1}=1$, we finally get
$$\big\|\omega^N_0  \big\|_{L^m(\Omega, L^3)} \leq C\big[ \|V \|_{L^3}^3 + \|\omega_{0}\|_{L^3}^3 \big]^{\frac{1}{3}} \leq C\big[ \|V \|_{L^3} + \|\omega_{0}\|_{L^3} \big]. $$
Combining with \eqref{lm1}, we get the boundedness of $\{\omega^N_0 \}_{N\geq 1}$ in $L^m(\Omega, L^1\cap L^3)$.

\subsection{The distance between $\omega_0$ and $\omega_0^N$}\label{boundomega0omega}

For simplicity, we take $m=p=4$ and assume $\omega_{0}\in C_c^1 (\R^d)$; then
\begin{align*}
	\E\big\|\omega_{0}^{N}-\omega_{0}\big\|_{L^4}^4 &= \E\int_{\R^d}\bigg(\frac{1}{N}\sum_{i=1}^{N}V^N(x-X_0^i) -\omega_0(x)\bigg)^4 \d x \\
	&=\frac{1}{N^4}\E\int_{\R^d} \bigg(\sum_{i=1}^{N}\big(V^N(x-X_0^i)-\omega_0(x)\big)\bigg)^4\d x.
\end{align*}
Introducing the notation $Y_i(x)= V^N(x-X_0^i)-\omega_0(x),\, x\in \R^d$, which are i.i.d. random variables for $ 1\leq i\leq N$; we have
$$\aligned
\E\big\|\omega_{0}^{N}-\omega_{0}\big\|_{L^4}^4 & = \frac{1}{N^4} \E \int_{\R^d} \bigg[\sum_{i=1}^N Y_i(x)^4 + 4\sum_{i\neq j} Y_i(x)^3 Y_j(x)+ 3\sum_{i\neq j} Y_i(x)^2 Y_j(x)^2 \\
&\hskip35pt + 6\sum_{i\neq j\neq k}\! Y_i(x)^2 Y_j(x) Y_k(x) +\! \sum_{i\neq j\neq k\neq l}\! Y_i(x) Y_j(x) Y_k(x) Y_l(x) \bigg] \d x .
\endaligned $$
We denote the five terms by $I_1,\ldots, I_5$ respectively.

First, we have
$$\aligned
I_1&= \frac{1}{N^3} \int_{\R^d} \E Y_1(x)^4\,\d x \\
&= \frac{1}{N^3} \int_{\R^d} \E \Big[V^N(x-X_0^1)^4 - 4\omega_0(x)V^N(x-X_0^1)^3 + 6 \omega_0(x)^2 V^N(x-X_0^1)^2 \\
&\hskip60pt -4 \omega_0(x)^3 V^N(x-X_0^1) + \omega_0(x)^4 \Big]\,\d x ,
\endaligned$$
which is dominated by
$$\frac{1}{N^3} \int_{\R^d} \E \Big[V^N(x-X_0^1)^4 + 6 \omega_0(x)^2 V^N(x-X_0^1)^2 + \omega_0(x)^4 \Big]\,\d x=: I_{1,1} + I_{1,2} +I_{1,3}. $$
It is clear that $I_{1,3}= \frac1{N^3} \|\omega_0 \|_{L^4}^4$. One has
$$\aligned
I_{1,1} &= \frac{1}{N^3} \int_{\R^d} \int_{\R^d} V^N(x-y)^4 \omega_0(y)\,\d y\d x \\
&= \frac1{N^{3-3\beta d}} \int_{\R^d} \omega_0(y)\,\d y \int_{\R^d} N^{\beta d} V^4(N^\beta(x-y))\,\d x\\
&= \frac1{N^{3-3\beta d}} \|V\|_{L^4}^4.
\endaligned $$
In the same way,
$$\aligned
I_{1,2} &= \frac{6}{N^3} \int_{\R^d} \omega_0(x)^2 \int_{\R^d} V^N(x-y)^2 \omega_0(y)\,\d y\d x \\
&\leq \frac6{N^{3-\beta d}} \|\omega_0\|_\infty \int_{\R^d} \omega_0(x)^2\,\d x \int_{\R^d} N^{\beta d} V^2(N^\beta(x-y))\,\d y\\
&= \frac6{N^{3-\beta d}} \|\omega_0\|_\infty \|\omega_0 \|_{L^2}^2 \|V\|_{L^2}^2.
\endaligned $$
To sum up, we obtain
$$I_1 \leq \frac{C}{N^{3-3\beta d}} \big(\|\omega_0 \|_{L^4}^4 + \|V\|_{L^4}^4 + \|\omega_0\|_\infty \|\omega_0 \|_{L^2}^2 \|V\|_{L^2}^2 \big)= \frac{C_{V,\omega_0}}{N^{3-3\beta d}}. $$

Next, by independence of $\{Y_i(x)\}_{i=1}^N$ and Jensen's inequality,
$$\aligned
I_2&= \frac{4}{N^4} N(N-1) \int_{\R^d} \E Y_1(x)^3\, \E Y_2(x)\,\d x\\
&\leq \frac 4{N^2} \int_{\R^d} \big[\E Y_1(x)^4\big]^{\frac34} \big[\E Y_2(x)^4\big]^{\frac14}\,\d x \\
&= \frac 4{N^2} \int_{\R^d} \E Y_1(x)^4 \,\d x \leq  \frac{C_{V,\omega_0}}{N^{2-3\beta d}} .
\endaligned $$
Similarly,
$$I_3\leq \frac{C_{V,\omega_0}}{N^{2-3\beta d}}, \quad I_4 \leq \frac{C_{V,\omega_0}}{N^{1-3\beta d}}. $$

Finally, it remains to estimate $I_5$. We have
\begin{align*}
	I_5 &=\frac{1}{N^4}\int_{\R^d} \sum_{i\neq j\neq k\neq l } \E Y_i(x)\, \E Y_j(x)\,\E Y_k(x)\,\E Y_l(x)\, \d x \\
	&\leq  \int_{\R^d} \big[\E Y_1(x) \big]^4\, \d x = \int_{\R^d} \big[ V^N\ast \omega_0(x)-\omega_0(x) \big]^4\, \d x.
\end{align*}
Recall that $\omega_0$ and $V$ are compactly supported; we can find big $R>0$ such that $\omega_0$ and $V^N\ast \omega_0$ vanish outside $B(R)$ for all $N\geq 1$. Thus,
$$I_5 \leq \int_{B(R)} \big[ V^N\ast \omega_0(x)-\omega_0(x) \big]^4\, \d x.$$
Moreover,
\begin{equation*}\label{L4estimate}
	\begin{aligned}
		\big|V^N\ast \omega_0(x)-\omega_0(x)\big| & = \bigg|\int_{\R^d}V^N(x-y)(\omega_{0}(y)-\omega_{0}(x))\, \d y \bigg|\\
		&\leq \|\nabla\omega_0\|_\infty \int_{\R^d}N^{\beta d}V(N^{\beta}(x-y))|x-y|\, \d y\\
		& \leq\frac{C_{\omega_{0},V}}{N^{\beta}}\int_{\R^d}N^{\beta d}V(N^{\beta}(x-y))\, \d y = \frac{C_{\omega_{0},V}}{N^{\beta}} ,
	\end{aligned}
\end{equation*}
where the third step is due to that $V$ is compactly supported. As a result, $I_5\leq C/N^{4\beta}$ for some constant $C$ depending also on $R$. Combining the estimates of $I_1,\ldots,I_5$ above, we can find $\lambda>0$ such that
\begin{align*}
	\big\|\omega_{0}^{N}-\omega_{0}\big\|_{L^4(\Omega,L^4)}\leq \frac{C}{N^{\lambda}}.
\end{align*}

Regarding the $L^4(\Omega,L^1)$ norm of $\omega_{0}^{N}-\omega_{0}$, we note that the latter is also compactly supported, thus there is some $R>0$ such that
\begin{align*}
	\big\|\omega_{0}^{N}-\omega_{0}\big\|_{L^4(\Omega,L^1)}^4 &=\E \bigg[\int_{B(R)} \big|V^N\ast S_0^N(x)-\omega_0(x)\big|\,\d x \bigg]^4 \\
	&\leq C_R \int_{B(R)} \E \big|V^N\ast S_0^N(x)-\omega_0(x)\big|^4 \, \d x \\
	&=\frac{C_R}{N^4} \int_{B(R)} \E\bigg(\sum_{i=1}^{N} Y_i(x) \bigg)^4\, \d x,
\end{align*}
so we can repeat the above computations to get the same estimate. In conclusion, we obtain the rate of convergence:
\begin{equation*}
	\zeta_N:=\big\|\omega^N_0 - \omega_0 \big\|_{L^4(\Omega, L^1\cap L^4)}\leq \frac{C}{N^{\lambda}} \rightarrow 0  \quad\text{as } N \rightarrow\infty.
\end{equation*}

\end{appendix}

\bigskip

\noindent \textbf{Acknowledgements.} The second named author is grateful to the financial supports of the National Key R\&D Program of China (No. 2020YFA0712700), the National Natural Science Foundation of China (Nos. 11931004, 12090014), and the Youth Innovation Promotion Association, CAS (Y2021002).

\end{document}